\newcounter{first}
\newenvironment{probs}
{%
 \begin{list}
       {(\textrm{\thefirst})}
       {\usecounter{first} \setlength{\leftmargin}{5pt}
        \setlength{\topsep}{3pt}
        \setlength{\itemsep}{3pt}
       }
}%
{\end{list}}
\newtheorem{thm}{Theorem}
\newtheorem{lem}[thm]{Lemma}
\newtheorem{prop}[thm]{Proposition}
\newtheorem{cor}[thm]{Corollary}
\theoremstyle{definition}
\newtheorem{rem}[thm]{Remark}
\newtheorem{ex}[thm]{Example}
\def\R{{\mathbb{R}}}
\def\Z{{\mathbb{Z}}}
\def\P{{\mathbb{P}}}
\def\XX{{\mathcal{P}}}
\DeclareMathOperator{\TC}{{\sf TC}}
\DeclareMathOperator{\cat}{\mathrm{cat}}
\begin{document}
	
\title[On $\TC(M)$ when $\pi_1(M)$ is abelian]{On the topological complexity of manifolds with abelian fundamental group}

\author{Daniel C. Cohen}
\address{Department of Mathematics, Louisiana State University, Baton Rouge, LA 70803 USA}
\email{\href{mailto:cohen@math.lsu.edu}{cohen@math.lsu.edu}}
\urladdr{\href{http://www.math.lsu.edu/~cohen/}{www.math.lsu.edu/\char'176cohen}}

\author{Lucile Vandembroucq}
\address{Centro de Matem\'atica, Universidade do Minho, Campus de Gualtar, 4710-057 Braga, Portugal}
\email{\href{mailto:lucile@math.uminho.pt}{lucile@math.uminho.pt}}
\thanks{The second author is partially supported by Portuguese Funds through FCT -- Funda\c c\~ao para a Ci\^encia e a Tecnologia, within the projects UIDB/00013/2020 and UIDP/00013/2020. }

\begin{abstract} 
We find conditions which ensure that the topological complexity of a closed manifold $M$ with abelian fundamental group is nonmaximal, and see through examples that our conditions are sharp. This generalizes 
results of Costa and Farber on the topological complexity of spaces with small fundamental group. Relaxing the commutativity condition on the fundamental group, we also generalize results of Dranishnikov on the Lusternik-Schnirelmann category of the cofibre of the diagonal map $\Delta: M \to M \times M$ for nonorientable surfaces by establishing the nonmaximality of this invariant for a large class of manifolds. 	
\end{abstract}

\keywords{topological complexity, Lusternik-Schnirelmann category}

\subjclass[2010]{
55M30, %LS category of a space
55S40, %Sectioning fiber spaces and bundles 
57N65%Algebraic topology of manifolds
}

\maketitle

\section{Introduction}
Let $X$ be a finite dimensional path-connected CW complex. 
Viewing $X$ as the configuration space of a mechanical system, Farber's topological complexity \cite{Far} is a numerical homotopy invariant which provides a measure of the complexity of the problem of motion planning in 
$X$, of interest in robotics. 
We consider the normalized versions of the topological complexity $\TC(X)$, and of the Lusternik-Schnirelmann (LS) category $\cat(X)$. That is, $\TC(X)$ is the least integer $k$ such that 
$X\times X$ can be covered by $k+1$ open sets 
on each of which 
the fibration $$X^{[0,1]}\to X\times X,\quad \lambda\mapsto (\lambda(0),\lambda(1))$$
admits a continuous (local) section, while $\cat(X)$ is the least integer $k$ such that 
$X$ can be covered by $k+1$ open sets, each of which is contractible in $X$. 
We have the following general upper bounds:	
\[
\cat(X) \leq \dim(X) \quad\text{and}\quad\TC(X) \leq 2\cat (X)\leq {2\dim(X)}. 
\]
We say that $\TC(X)$ is maximal when $\TC(X)=2\dim(X)$. This maximal value can only be reached when the fundamental group of $X$ is not trivial since $\pi_1(X)=0$ implies $\cat(X)\leq \dim(X)/2$ and consequently $\TC(X) \leq \dim(X)$.

There are numerous spaces for which $\TC$ is known to be maximal. Examples include 
surfaces of high genus
(\cite{Far}, \cite{Dranishnikov}, \cite{Dranishnikov2}, \cite{CV}, \cite{IST}) and some classes of higher dimensional manifolds and aspherical spaces. For instance, M. Grant and S. Mescher have recently established the maximality of $\TC$ for a large class of cohomologically symplectic manifolds \cite{GM}, while A. Dranisnikov has shown that, except for the circle $S^1$, any aspherical space with hyperbolic fundamental group has maximal $\TC$ \cite{Dranishnikov-hyperbolic}.

In the opposite direction, $\TC$ is not maximal for the familiar examples of manifolds with abelian fundamental group such as the $n$-torus $T^n=(S^1)^n$ \cite{Far}, the projective space $\R\P^n$ \cite{FTY}, the lens spaces $L_p^{2n+1}$ \cite{FG}, and their products.

This paper is motivated by the question to which extent having an abelian fundamental group implies the nonmaximality of the topological complexity. A first result in this direction was established by A. Costa and M. Farber:
\begin{thm}[\cite{CostaFarber}] Let $X$ be a finite CW-complex of dimension $n$.
	\begin{probs}
		\item If $\pi_1(X)=\Z_2$, then $\TC(X) <2n$.
		\item If $\pi_1(X)=\Z_3$ and either $n$ is odd, or $n=2m$ is even and the
		$3$-adic expansion of $m$ contains at least one digit $2$, then $\TC(X) <2n$.
	\end{probs}
\end{thm}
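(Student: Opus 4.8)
The plan is to replace the inequality $\TC(X)<2n$ by the vanishing of a single cohomology class and then to compute that class. Write $G=\pi_1(X)$, $p=|G|\in\{2,3\}$ and $n=\dim X$ (one may assume $n\ge 2$, since a finite $1$--complex has free fundamental group). Recall $\TC(X)=\secat\!\bigl(P(X)\xrightarrow{\pi}X\times X\bigr)$, $\pi(\lambda)=(\lambda(0),\lambda(1))$, and that the homotopy fibre $F$ of the diagonal $\Delta\colon X\to X\times X$ satisfies $\pi_0(F)\cong G$, the monodromy of $\pi_1(X\times X)=G\times G$ identifying $\widetilde H_0(F)$ with the augmentation ideal $I_G=\ker(\Z[G]\xrightarrow{\varepsilon}\Z)$ regarded as a $\Z[G\times G]$--module via $(a,b)\cdot g=agb^{-1}$. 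Then $\TC(X)\le 2n-1$ iff the $(2n)$--fold fibrewise join $\pi_{2n-1}\colon P_{2n-1}(X)\to X\times X$ admits a section; its fibre $F^{*2n}$ is only $(2n-2)$--connected, with $\pi_{2n-1}(F^{*2n})\cong\widetilde H_0(F)^{\otimes 2n}\cong I_G^{\otimes 2n}$ by the Hurewicz theorem and $F^{*2n}\simeq\Sigma^{2n-1}F^{\wedge 2n}$. As $\dim(X\times X)=2n$, a section exists over the $(2n-1)$--skeleton, and the single remaining obstruction lies in $H^{2n}(X\times X;I_G^{\otimes 2n})$; adapting the argument behind Berstein's sharpness theorem for $\cat$ to $\secat(\pi)$, this obstruction equals the $2n$--th cup power $\mathfrak z_X^{\,2n}$ of the canonical class $\mathfrak z_X\in H^1(X\times X;I_G)$ classified by $0\to I_G\to\Z[(G\times G)/\Delta G]\to\Z\to 0$. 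Hence it suffices to prove $\mathfrak z_X^{\,2n}=0$.

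First I would identify $\mathfrak z_X$. Because $G$ is abelian, the extension above is pulled back along $q\colon G\times G\to G$, $(a,b)\mapsto ab^{-1}$, from $0\to I_G\to\Z[G]\to\Z\to 0$, so $\mathfrak z_X=h^{*}(\mathfrak b_G)$, where $\mathfrak b_G\in H^1(BG;I_G)$ is the Berstein--Schwarz class of $G$ and $h=Bq\circ(f\times f)\colon X\times X\to BG$, with $f\colon X\to BG$ classifying the universal cover; consequently $\mathfrak z_X^{\,2n}=(f\times f)^{*}Bq^{*}(\mathfrak b_G^{\,2n})$. The crucial dimension input is that $f^{*}$ annihilates $H^{>n}(BG;-)$: writing $w=f^{*}x\in H^1(X;\Z/p)$ for the pullback of a generator and $\widetilde w=\beta(w)\in H^2(X;\Z)$ for its integral Bockstein (of order $p$), in total degree $2n$ only a contribution of $\widetilde w^{\,k}\otimes\widetilde w^{\,n-k}$ with $2k\le n$ and $2(n-k)\le n$ — i.e. $k=n/2$ — can survive $(f\times f)^{*}$; in particular $\mathfrak z_X^{\,2n}=0$ whenever $n$ is odd.

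For $G=\Z_2$ this already finishes the proof: $I_{\Z_2}$ is the sign module, so $I_{\Z_2}^{\otimes 2n}$ is the trivial module $\Z$ and $\mathfrak b_{\Z_2}^{2}=\beta(x)$ in $H^2(B\Z_2;\Z)$, whence $\mathfrak z_X^{\,2n}=(\widetilde w_1+\widetilde w_2)^{n}$ in the ordinary ring $H^{*}(X\times X;\Z)$ (with $\widetilde w_1=\widetilde w\otimes1$, $\widetilde w_2=1\otimes\widetilde w$). This is $0$ for $n$ odd and, for $n=2m$, equals $\binom{2m}{m}\,\widetilde w^{\,m}\otimes\widetilde w^{\,m}$, which vanishes since $\widetilde w^{\,m}$ is $2$--torsion and $\binom{2m}{m}=2\binom{2m-1}{m-1}$ is even. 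This settles part~(1).

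For $G=\Z_3$ the scheme is the same, but $I_{\Z_3}$ has $\Z$--rank $2$, so the coefficients no longer trivialize and the cohomological bookkeeping must be handled with care — this is where the real work lies. Using that $H^{k}(B\Z_3;I_{\Z_3})=0$ for every even $k\ge 2$ and $H^{k}(B\Z_3;\Z[\Z_3])=0$ for $k\ge 1$, one filters $I_{\Z_3}^{\otimes 2n}$ by $\Z[\Z_3]$--sublattices with subquotients among $\Z$, $I_{\Z_3}$, $\Z[\Z_3]$ and concludes that $H^{2n}(B\Z_3;I_{\Z_3}^{\otimes 2n})$, and hence $\mathfrak b_{\Z_3}^{\,2n}$, is controlled entirely by the trivial subquotients, on which $\mathfrak b_{\Z_3}^{\,2n}$ restricts to a combination of the order-$3$ class $\widetilde y^{\,n}=\beta(x)^{n}$. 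Applying $Bq^{*}$ (which sends $\widetilde y$ to $\widetilde y_1-\widetilde y_2$) and $(f\times f)^{*}$, the dimension input leaves, for $n=2m$, only a unit multiple of $\binom{2m}{m}\,\widetilde w^{\,m}\otimes\widetilde w^{\,m}$, and $0$ for $n$ odd; since $\widetilde w^{\,m}$ is $3$--torsion, $\mathfrak z_X^{\,2n}=0$ precisely when $3\mid\binom{2m}{m}$. By Kummer's theorem this happens exactly when adding $m$ to itself in base $3$ produces a carry, i.e. when the $3$--adic expansion of $m$ contains a digit $2$; together with the case $n$ odd this is the hypothesis of part~(2). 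The hard part is precisely this last step — pinning down, inside $H^{2n}(X\times X;I_{\Z_3}^{\otimes 2n})$, the exact coefficient of $\widetilde w^{\,m}\otimes\widetilde w^{\,m}$ and checking it is a unit times $\binom{2m}{m}$ — which demands a careful analysis of the $\Z[\Z_3]$--lattice structure of $I_{\Z_3}^{\otimes 2n}$ and of the cup powers of $\mathfrak b_{\Z_3}$ in $H^{*}(B\Z_3;I_{\Z_3}^{\otimes\bullet})$.
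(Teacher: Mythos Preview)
This theorem is not proved in the present paper; it is quoted from Costa--Farber \cite{CostaFarber} as background. There is therefore no ``paper's own proof'' to compare against beyond the original reference, and your proposal is in fact a reconstruction of the Costa--Farber argument itself: reduce $\TC(X)<2n$ to the vanishing of $\mathfrak z_X^{\,2n}=\mathfrak v_X^{\,2n}$ (this is exactly Theorem~\ref{BS-CF}(2) here), use abelianity to write $\mathfrak v_X=h^*\mathfrak b_G$, and then compute.

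Your treatment of $G=\Z_2$ is complete and correct: $I_{\Z_2}^{\otimes 2n}\cong\Z$ trivially, $\mathfrak b_{\Z_2}^{2}$ is the integral Bockstein generator, the dimension cut-off kills all cross terms except possibly $\binom{2m}{m}\,\widetilde w^{\,m}\otimes\widetilde w^{\,m}$, and $\binom{2m}{m}$ is even while $\widetilde w^{\,m}$ is $2$--torsion. That is exactly the Costa--Farber proof of~(1).

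For $G=\Z_3$ you have identified the right strategy and the right endpoint (Kummer's criterion for $3\mid\binom{2m}{m}$), but you have not actually carried out the analysis, and you say so yourself. The sentence ``one filters $I_{\Z_3}^{\otimes 2n}$ by $\Z[\Z_3]$--sublattices \dots\ and concludes that $\mathfrak b_{\Z_3}^{\,2n}$ is controlled entirely by the trivial subquotients, on which it restricts to a combination of $\widetilde y^{\,n}$'' and the subsequent claim that after pulling back one gets ``a \emph{unit} multiple of $\binom{2m}{m}\,\widetilde w^{\,m}\otimes\widetilde w^{\,m}$'' are precisely the computations Costa--Farber perform in detail; asserting the outcome is not a proof. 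So part~(2) as written has a genuine gap: the decomposition of $I_{\Z_3}^{\otimes 2n}$ as a $\Z[\Z_3]$--lattice and the exact tracking of $\mathfrak b_{\Z_3}^{\,2n}$ through it remain to be done. Your outline is accurate, but at this point it is a plan, not a proof.

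As a side remark, the present paper develops a \emph{different} route to such nonmaximality statements in the manifold case: via Poincar\'e duality one trades $\mathfrak v^{2n}\ne 0$ for the nonvanishing of $\chi_*(\mathfrak m\times\mathfrak m)$ in $H_{2n}(B\pi;\widetilde\Z)$ (Proposition~\ref{prop:TC:abelian} and Proposition~\ref{prop:equivalence}), and then computes in homology with the Pontryagin product. That machinery recovers and extends Costa--Farber's conclusions for closed manifolds (see Proposition~\ref{torusxcyclic} with $r=0$), but it does not apply to arbitrary finite CW--complexes, which is the generality of the cited theorem.
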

Costa-Farber also show that, when  $\pi_1(X)=\Z_3$, the condition stated in the even-dimensional case is sharp. For instance, the topological complexity of the $6$-dimensional skeleton of the lens space $L_3^{\infty}$ is maximal. Note that this space is not a closed manifold. 
Restricting our study to closed manifolds with abelian fundamental group, we establish the following generalization of Costa-Farber's theorem: 
 
\begin{thm} \label{thm:TC} Let $M$ be a $n$-dimensional connected closed manifold with $n\geq 2$.
We have $\TC(M)< 2n$ when
\begin{probs}
\item $M$ is non-orientable and $\pi_1(M)$ is abelian;
\item $M$ is orientable and $\pi_1(M)$ is an abelian group of one of the following forms (where $p$ is a prime, $r$ is a nonnegative integer, and $a,b,c,s$ are positive integers):
\begin{enumerate}
\item[(a)] $\Z^r$ with either $n$ odd,  or with $n$ even such that $2n>r$;
\item[(b)] $\Z^r\times \Z_{p^ a}$ with either $n$ odd such that $n>r$, or 
with $n$ even such that $n\geq r$;
\item[(c)] $\Z^r\times \Z_{p^ a}\times \Z_{p^ b}$ with $r\leq 1$;
\item[(d)] $\Z_{p^ a}\times \Z_{p^ b}\times \Z_{p^c}$;
\item[(e)] $\Z^r \times (\Z_2)^s$ with either $n$ odd,  or with $n$ even such that $2n>r$.
\end{enumerate} 
\end{probs}
\end{thm}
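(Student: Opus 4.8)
The plan is to translate ``$\TC(M)<2n$'' into the vanishing of a single cohomology class and then to compute that class, using the closed-manifold hypothesis through Poincar\'e duality. For \emph{Step 1}, recall the Costa--Farber cohomological description of maximal topological complexity \cite{CostaFarber}. Writing $\pi=\pi_1(M)$ and $n=\dim M$, there is a primary obstruction class $\mathfrak z_M\in H^1(M\times M;\mathcal I)$ to sectioning the path fibration $M^{[0,1]}\to M\times M$, where $\mathcal I$ is the augmentation ideal of $\Z[\pi]$ viewed as a $\Z[\pi\times\pi]$-module through the ``difference'' homomorphism $\pi\times\pi\to\pi$. Since the relevant fibrewise joins are $(2n-2)$-connected, $\mathfrak z_M^{2n}\in H^{2n}(M\times M;\mathcal I^{\otimes 2n})$ is the only surviving obstruction, so $\TC(M)<2n$ if and only if $\mathfrak z_M^{2n}=0$ (the reverse implication being the cup-length lower bound for $\TC$ with local coefficients). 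As $\mathfrak z_M$ restricts trivially to the diagonal, it lifts to $\widetilde H^1(C_\Delta;\mathcal I)$, with $C_\Delta$ the cofibre of $\Delta$; deciding whether the lifted power survives restriction to $M\times M$ is the precise point of contact with $\cat(C_\Delta)$ and with Dranishnikov's results \cite{Dranishnikov}.

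For \emph{Step 2}, the reduction to coinvariants: since $M$ is closed, $M\times M$ is a closed $2n$-manifold, so twisted Poincar\'e duality gives an isomorphism $H^{2n}(M\times M;\mathcal I^{\otimes 2n})\cong(\mathcal I^{\otimes 2n}\otimes\Z^{w})_{\pi\times\pi}$, with $w=w_1(M)\boxtimes w_1(M)$ the orientation character. Pulling $\mathfrak z_M$ back along the classifying map $M\to B\pi$ then exhibits the image of $\mathfrak z_M^{2n}$ as an explicit element of a module built from $I[\pi]$ and the orientation character, depending only on $\pi$, on $w_1(M)$, and on the parity of $n$. We are left with a purely algebraic vanishing question for the abelian groups in the list.

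\emph{Step 3} handles the non-orientable case. If $M$ is non-orientable then $w_1(M)\neq 0$, so in the coinvariants $2=0$ and the coefficients become an $\mathbb F_2[\pi\times\pi]$-module; correspondingly one replaces $\mathfrak z_M$ by its mod-$2$ reduction. Using that $H^*(B\pi;\mathbb F_2)$ is generated in degree one (as $\pi$ is abelian) and that this reduction is, modulo decomposables, a sum of classes of the form $x\otimes 1+1\otimes x$ with $x\in H^1(B\pi;\mathbb F_2)$, one expands the $2n$-th power inside the coinvariants, where the two tensor slots get identified, and finds that it vanishes for every abelian $\pi$. This gives part (1).

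\emph{Step 4} is the orientable case and, I expect, the main obstacle. Here $w=0$, so one must compute $\mathfrak z_M^{2n}$ in $(\mathcal I^{\otimes 2n})_{\pi\times\pi}$ for $\pi$ of the stated forms. For $\pi=\Z^r$ one uses the Koszul resolution of $I[\Z^r]$ over $\Z[\Z^r]$ together with the fact that $\mathfrak z_M^{2n}$ is a sum of products of degree-one classes pulled back from $T^r\times T^r$, so that a dimension-and-parity bookkeeping pins down exactly when it can survive (automatically killed when $n$ is odd, and under $2n>r$ when $n$ is even). For the cyclic and two- or three-summand $p$-group cases one pulls back to products of infinite lens spaces and their torus analogues, where the multiplicative structure of $H^*(B\pi)$ --- the Bocksteins of the degree-one generators and the relations $p^{a}\cdot(\text{degree-two generator})=0$ --- must be combined with the rank restrictions (e.g.\ $r\le 1$ in (c)) to force $\mathfrak z_M^{2n}$ into a subgroup that vanishes. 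Running the same computation in the borderline cases \emph{excluded} by the hypotheses, one obtains a nonzero class realised on an explicit closed manifold, which supplies the sharpness examples. The genuinely hard part will be to extract a usable closed form for $\mathfrak z_M^{2n}$ in $(\mathcal I^{\otimes 2n})_{\pi\times\pi}$ for the three-summand $p$-groups (d) and the rank-one case (c), where the interaction between the twisted coefficients and the truncated-polynomial-and-Bockstein structure of $H^*(B\pi)$ is most delicate, and then to match the resulting dividing line precisely with the numerical conditions while exhibiting the sharpness examples on the other side of it.
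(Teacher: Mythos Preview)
Your global strategy (Costa--Farber criterion $+$ Poincar\'e duality $+$ reduction to $B\pi$) matches the paper's, but there is a genuine gap in Step~2 and a missing structural idea that the paper exploits throughout.

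In Step~2 you claim that after pulling back along $M\to B\pi$ the image of $\mathfrak z_M^{2n}$ ``depends only on $\pi$, on $w_1(M)$, and on the parity of $n$''. This is not right: by naturality of the cap product, what survives is $\gamma_*[M]\cap\mathfrak v_\pi^{2n}$, and hence the answer depends on the class $\mathfrak m=\gamma_*[M]\in H_n(B\pi;\widetilde\Z)$, which is \emph{not} determined by $\pi$, $w_1$ and $n$. The paper therefore proves something strictly stronger than what you set up: for every $\mathfrak c\in H_n(G;\widetilde\Z)$ one has $\chi_*(\mathfrak c\times\mathfrak c)=0$ under the stated hypotheses (Proposition~\ref{prop-general}). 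Your Step~4 never confronts this quantifier.

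The second, more serious, missing idea is the passage from the coinvariant module to the Pontryagin ring. Because $\pi$ is abelian, $\chi:(a,b)\mapsto a\bar b$ is a homomorphism and $\mathfrak v_\pi=\chi^*\mathfrak b_\pi$; cap-product naturality then converts ``$\mathfrak z_M^{2n}=0$'' into ``$\chi_*(\mathfrak m\times\mathfrak m)=0$ in $H_{2n}(B\pi;\widetilde\Z)$'', and since $\chi=\mu\circ(\mathrm{id}\times j)$ this equals $\mathfrak m\wedge j_*(\mathfrak m)$ in the Pontryagin product. This single identity is what makes all the case-by-case work tractable: one computes $j_*$ explicitly on the small resolutions of cyclic factors, and then uses strict anticommutativity together with the fact that even-degree Pontryagin squares are divisible by $2$. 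Your proposal to stay on the cohomology side and compute cup-powers directly inside $(\mathcal I^{\otimes 2n}\otimes\Z^w)_{\pi\times\pi}$ bypasses this ring structure, and for cases (c) and (d) you yourself flag that you do not see how to carry it out; the paper's homological reformulation is precisely the tool that resolves those cases.

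Your Step~3 is also not convincing as stated. The assertion ``in the coinvariants $2=0$'' does not follow from $w_1(M)\neq 0$ alone, since the diagonal action on $\mathcal I^{\otimes 2n}\otimes\Z^w$ mixes the two factors; and the claimed mod-$2$ expansion of $(x\otimes 1+1\otimes x)^{2n}$ does not vanish in general. The paper instead shows (via K\"unneth) that $H_*(G;\widetilde\Z)$ is entirely $2$-torsion when the action is nontrivial, observes that on the relevant atomic chain complexes $\mathbf j=\mathrm{id}$, and then kills $\mathfrak c\wedge j_*(\mathfrak c)=\mathfrak c\wedge\mathfrak c$ using the divisibility-by-$2$ of Pontryagin squares.
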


Note that the theorem is also clearly true when $n=1$, since here 
we can identify $M$ with $S^1$ and we then have $\TC(M)=1<2\dim(M)$. The sufficient conditions which appear in the orientable case are not necessary but are sharp. In Section~\ref{sec:examples}, we construct examples of manifolds with abelian fundamental group and maximal $\TC$ which show the sharpness of our conditions. For instance, Condition (2a) is sharp because Example \ref{ex:intro} exhibits a $4$-dimensional manifold $M$ with 
$\pi_1(M)=\Z^8$ and $\TC$ maximal. We note also that our statement only 
considers abelian groups which are products of infinite cyclic groups and 
primary cyclic groups for the same prime. At the end of Section \ref{subsection:statement} we briefly discuss the case of abelian groups involving 
products of cyclic groups with pairwise relatively prime orders.

We also obtain results on the LS-category of the cofibre of the diagonal map $C_{\Delta}(X)=(X\times X) /\Delta (X)$. This number is also bounded above by $2\dim(X)$ and is known to coincide with $\TC(X)$ for some classes of spaces, including spheres, orientable surfaces, real and complex projective spaces, lens spaces,
$H$ spaces \cite{GCV}, and certain two-cell complexes \cite{GGV}. However, as shown by Dranishnikov, $\cat(C_{\Delta}(X))$ and $\TC(X)$ may differ.
In particular, if $X=N_g$ is a nonorientable surface of genus $g\ge 2$, one has
$3=\cat(C_{\Delta}(N_g)) < \TC(N_g)=4=2\dim(N_g)$, see \cite{Dranishnikov}, \cite{CV}.
We show that this phenomenon -- $\cat(C_{\Delta}(X))$ nonmaximal and $\TC(X)$ maximal -- cannot occur when $\pi_1(X)$ is abelian, see Proposition \ref{prop:TCcofibre}. We also establish the nonmaximality of $\cat(C_{\Delta}(X))$ for the following class of manifolds.

\begin{thm} \label{thm:cofibre} Let $M$ be a $n$-dimensional connected closed manifold with $n\geq 2$.
We have $\cat(C_{\Delta}(M))< 2 \dim(M)$ when
\begin{probs}
\item $M$ is non-orientable;
\item $M$ is orientable and $H_1(M)$ is of of one of the following forms (where $p$ is a prime, $r$ is a nonnegative integer, and $a,b,c,s$ are positive integers):
\begin{enumerate}
\item[(a)] $\Z^r$ with either $n$ odd, or with $n$ even such that $2n>r$;
\item[(b)] $\Z^r\times \Z_{p^ a}$ with either $n$ odd such that $n>r$, or 
with $n$ even such that $n\geq r$;
\item[(c)] $\Z^r\times \Z_{p^ a}\times \Z_{p^ b}$ with $r\leq 1$;
\item[(d)] $\Z_{p^ a}\times \Z_{p^ b}\times \Z_{p^c}$;
\item[(e)] $\Z^r \times (\Z_2)^s$ with either $n$ odd, or with $n$ even such that $2n>r$.
\end{enumerate} 
\end{probs}
\end{thm}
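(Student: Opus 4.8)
The plan is to run, directly on the $2n$-dimensional complex $C_{\Delta}(M)$, the obstruction-theoretic argument underlying Theorem~\ref{thm:TC}, exploiting that $\cat(C_{\Delta}(M))<2n$ is governed by a single cohomology class over $\pi_1(C_{\Delta}(M))$ together with the fact that this group is exactly $H_1(M)$. I would first record the latter: since $\Delta_*\colon\pi_1(M)\to\pi_1(M)\times\pi_1(M)$ is a split monomorphism, a van Kampen computation for the collapse $M\times M\to(M\times M)/\Delta(M)=C_{\Delta}(M)$ shows that the normal closure of $\Delta_*\pi_1(M)$ contains $[\pi_1(M),\pi_1(M)]\times[\pi_1(M),\pi_1(M)]$ (because $[(a,1),(g,g)]=([a,g],1)$) and equals the kernel of the map $\pi_1(M)\times\pi_1(M)\to H_1(M)$, $(a,b)\mapsto[a]-[b]$; hence $\pi_1(C_{\Delta}(M))\cong H_1(M)$. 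This already explains why the hypotheses of Theorem~\ref{thm:cofibre} are phrased in terms of $H_1(M)$, with no commutativity assumed on $\pi_1(M)$. Note also that $\dim C_{\Delta}(M)=2n$, since $\Delta(M)$ has codimension $n$.

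Since $C_{\Delta}(M)$ has dimension $2n$ and the fibre $F$ of its top Ganea fibration $p_{2n-1}$ is $(2n-2)$-connected, $\cat(C_{\Delta}(M))\le 2n-1$ is equivalent to the vanishing of the unique primary obstruction $\mathfrak{o}\in H^{2n}(C_{\Delta}(M);\pi_{2n-1}(F))$, with local coefficients over $\pi_1(C_{\Delta}(M))=H_1(M)$; by the join description of Ganea fibres, $F$ is the $2n$-fold join of $\Omega C_{\Delta}(M)$ with itself and $\pi_{2n-1}(F)$ is the $2n$-fold tensor power of the augmentation ideal of the group ring $\Z[H_1(M)]$ with the diagonal action. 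The crucial step is then to identify the group containing $\mathfrak{o}$ by Poincar\'e--Lefschetz duality: writing $q\colon M\times M\to C_{\Delta}(M)$ for the quotient map, one has $H^{2n}(C_{\Delta}(M);\mathcal{G})\cong H^{2n}(M\times M,\Delta(M);q^*\mathcal{G})$, and, excising a tubular neighbourhood of $\Delta(M)$ and applying Lefschetz duality to the resulting compact $2n$-manifold with boundary, this is identified with the coinvariants $(q^*\mathcal{G}\otimes\mathcal{O}_{M\times M})_{\pi_1(M\times M\setminus\Delta(M))}$. The monodromy of both $q^*\mathcal{G}$ and the orientation system $\mathcal{O}_{M\times M}$ factors through $\pi_1(M\times M)=\pi_1(M)\times\pi_1(M)\twoheadrightarrow H_1(M)$, $(a,b)\mapsto[a]-[b]$, so this reduces to a module of twisted coinvariants depending only on the abelian group $H_1(M)$ and the orientation character $w\colon H_1(M)\to\Z_2$ (nontrivial precisely when $M$ is non-orientable); the possibly complicated fundamental group of the complement plays no role.

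Everything is now reduced to the same group-algebra problem solved in the proof of Theorem~\ref{thm:TC}: for each admissible abelian group $H_1(M)$ and character $w$, show that $\mathfrak{o}$ vanishes in the above twisted-coinvariants module. I would invoke, case by case, the computations from the proof of Theorem~\ref{thm:TC} with $\pi_1(M)$ replaced by $H_1(M)$ --- conditions (a)--(e) of Theorem~\ref{thm:cofibre}(2) are exactly conditions (a)--(e) of Theorem~\ref{thm:TC}(2) imposed on $H_1(M)$, with $w$ trivial in the orientable case --- while in the non-orientable case~(1) it is the nontriviality of $w$ that forces $\mathfrak{o}$ to vanish, generalizing Dranishnikov's treatment of $C_{\Delta}(N_g)$. (When $\pi_1(M)$ is itself abelian this can be short-circuited: Theorem~\ref{thm:TC} gives $\TC(M)<2n$ and the general inequality $\cat(C_{\Delta}(M))\le\TC(M)$ finishes it; so the genuinely new content is the case where $\pi_1(M)$ is non-abelian but $H_1(M)$ has one of the listed forms, and it is precisely the identification $\pi_1(C_{\Delta}(M))=H_1(M)$ that makes this accessible.)

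The main obstacle is that it does \emph{not} suffice to show the ambient coinvariants group vanishes: already for $M=\R\P^2$ this group is $\Z_2$ while $\cat(C_{\Delta}(\R\P^2))=3$, so one must actually analyse the class $\mathfrak{o}$ itself --- presumably by expressing it as a cup product of lower-dimensional classes and tracking it through the duality isomorphism, exactly as is done for $\secat(\Delta)$ in the proof of Theorem~\ref{thm:TC}. Carrying this out uniformly over the five families (a)--(e), keeping the orientation twist in the Lefschetz duality correct when $M\times M$ is non-orientable, and checking the low case $n=2$ (where $\Delta(M)$ has codimension two), is where the real work lies.
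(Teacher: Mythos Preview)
Your outline lands on the same architecture as the paper: identify $\pi_1(C_\Delta(M))\cong H_1(M)$, reduce the question $\cat(C_\Delta(M))<2n$ to a single top-degree obstruction with coefficients in a tensor power of the augmentation ideal of $\Z[H_1(M)]$, dualize, and feed the result into the purely algebraic computation (the paper's Proposition~\ref{prop-general}) that drives Theorem~\ref{thm:TC}. So the global plan is right, and your recognition that only $H_1(M)$ and the orientation character matter is exactly the point.

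But the step you flag as ``the main obstacle'' is not a detail --- it is the entire content of the reduction, and you have not supplied it. The paper closes this gap as follows. Rather than Ganea obstructions, it works with the Berstein--Schwarz class $\mathfrak b_{C_\Delta(M)}\in H^1(C_\Delta(M);\mathrm I[H])$ and shows (Lemma~\ref{lemma:cofibre}) that the factorization $\chi\circ(\alpha\times\alpha)\circ(\gamma\times\gamma)$ of $M\times M\to BH$ through $q$ yields a map $\xi\colon C_\Delta(M)\to BH$ with $\mathfrak b_{C_\Delta(M)}=\xi^*\mathfrak b_H$. This identifies your $\mathfrak o$ concretely as $\mathfrak b_{C_\Delta(M)}^{2n}=q^{*-1}\bigl((\gamma\times\gamma)^*(\alpha\times\alpha)^*\chi^*\mathfrak b_H^{2n}\bigr)$. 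The paper then avoids your Lefschetz-duality-on-the-complement route entirely: from the long exact sequence of the cofibration $M\xrightarrow{\Delta}M\times M\xrightarrow{q}C_\Delta(M)$ one sees that $q^*$ is an \emph{isomorphism} in degree $2n$ (because $\dim M=n<2n-1$), so one may apply ordinary twisted Poincar\'e duality on the closed manifold $M\times M$ and naturality of cap products to get
\[
\mathfrak b_{C_\Delta(M)}^{2n}=0 \iff \xi_*q_*[M\times M]\cap\mathfrak b_H^{2n}=0 \iff \chi_*(\alpha_*\mathfrak m\times\alpha_*\mathfrak m)=0 \text{ in } H_{2n}(H;\widetilde\Z),
\]
which is Proposition~\ref{prop:cofibre}. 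This is cleaner than excising a tubular neighbourhood and worrying about $\pi_1$ of the complement; your assertion that the complement's fundamental group ``plays no role'' would otherwise need its own argument. Once this reduction is in hand, Proposition~\ref{prop-general} finishes all cases simultaneously (the non-orientable case is its part~(1), not a separate Dranishnikov-style argument).

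A side remark: your shortcut ``$\cat(C_\Delta(M))\le\TC(M)$'' is not a general inequality in the literature; what the paper proves (Proposition~\ref{prop:TCcofibre}) is only the maximal-value implication $\cat(C_\Delta(X))=2n\Rightarrow\TC(X)=2n$, via the comparison $I(\alpha)\mathfrak v_X=q^*\mathfrak b_{C_\Delta(X)}$. So even in the abelian-$\pi_1$ case you should go through the Berstein--Schwarz identification rather than invoke that inequality.
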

 Again the conditions in the orientable case are not necessary but are sharp. Note that these conditions are now on $H_1(M)$ which is always abelian and which, as established in \cite[Proposition 3.4.1]{Dranishnikov}, coincides with the fundamental group of $C_{\Delta}(M)$. 
Note that we cannot relax the hypothesis for $M$ to be a manifold since, as mentioned before, the $6$-dimensional skeleton of the lens space $L_3^{\infty}$, $L_3^{(6)}$, satisfies $\TC(L_3^{(6)})=12$ and it is shown in \cite{GCV} that $\TC(L_3^{(6)})=\cat(C_{\Delta}(L_3^{(6)}))$.
We note also that, when $n=1$ and hence $M=S^1$, the conclusion of Theorem \ref{thm:cofibre} is also true since $\cat(C_{\Delta}(S^1))=\TC(S^1)=1$ \cite{GCV}. 

As an extension of the phenomenon which occurs for the nonorientable surfaces of genus $g\geq 2$, we can deduce from Theorem \ref{thm:cofibre} and a standard cup-length argument that, for any $n\geq 3$ and $g\geq 2$, the connected sum of $g$ copies of $\R\P^n$, $\XX^n_g=\R\P^n\#\cdots \# \R\P^n$, satisfies $\cat(C_{\Delta}(\XX^n_g))=2n-1$ while, as established in \cite{CV2}, $\TC(\XX^n_g)=2n$. This strongly differs from the case $g=1$ since, as mentioned before, $\TC(\R\P^n)$ is not maximal and $\TC(\R\P^n)=\cat(C_{\Delta}(\R\P^n))$. 

Theorems \ref{thm:TC} and \ref{thm:cofibre} will follow from an algebraic result established in Section \ref{sec:abelian} (Proposition \ref{prop-general}) together with some criteria for the nonmaximality of $\TC(M)$ and $\cat(C_{\Delta}(M))$ (Propositions \ref{prop:TC:abelian},\ref{prop:cofibre} and \ref{prop:equivalence}). These criteria, which are in the same spirit as Dranishnikov's approach to $\cat(C_{\Delta}(M))$ in \cite{Dranishnikov} (Proposition \ref{prop:cofibre} can actually be deduced from \cite{Dranishnikov}), are developed in Section \ref{sec:criteria} through a study of the $\TC$-canonical class introduced in \cite{CostaFarber} (which we call the Costa-Farber class) and of the classical Berstein-Schwarz class. Finally,  
we construct in Section \ref{sec:examples} examples which show that our conditions in Theorems  \ref{thm:TC} and \ref{thm:cofibre} are sharp.

\section{Nonmaximality criteria} \label{sec:criteria}
We first recall some material on the Berstein-Schwarz and Costa-Farber classes which play important roles in the study of the LS-category and topological complexity of spaces with nontrivial fundamental group. References include  \cite{Berstein}, \cite{CostaFarber}, \cite{CLOT}, \cite{DR}, \cite{Sch}. 

\subsection{Berstein-Schwarz and Costa-Farber classes} 
Let $X$ be a path-connected CW-complex and $\pi=\pi_1(X)$ its fundamental group.  Let $\Z[\pi]$ be the integral group ring and let $I(\pi)=\ker(\varepsilon\colon\Z[\pi]\to \Z)$ be the augmentation ideal. Recall that $I(\pi)$ is both a  (left) $\Z[\pi]$- and $\Z[\pi\times \pi]$-module through the actions given by:
\[
a\cdot \sum n_ia_i= \sum n_i aa_i \quad (a, b) \cdot \sum n_ia_i= \sum n_i (aa_i\bar{b}).
\]
Here $n_i\in \Z$, $a,b,a_i\in \pi$ and $\bar{a}$ denotes the inverse of $a$. The ideal $I(\pi)$ can then be regarded as a system of local coefficients over both $X$ and $X\times X$ and we will use the notations $\mathrm{I}[\pi]$ and $\varmathbb{I}[\pi]$ to distinguish the two structures. We refer to  \cite[VI]{Wh} and \cite[Chapter 5]{DavisKirk} as general references for local coefficient systems and to \cite{Brown} for the special case of group (co)homology.  If $P$ and $Q$ are (left) $\Z[\pi]$-modules we denote by  $P\otimes_{\pi}Q$ the coinvariants of the (left) $\Z[\pi]$-module $P\otimes Q$ given with the diagonal action of $\pi$, and by $P^k$ the $k$-fold tensor power of $P$ with the respective diagonal action. In particular, $\mathrm{I}^k[\pi]$ and $\varmathbb{I}^k[\pi]$ denote the $k$-fold tensor powers of $\mathrm{I}[\pi]$ and $\varmathbb{I}[\pi]$.

The Berstein-Schwarz class ${\mathfrak b}_X\in H^1(X;\mathrm{I}[\pi])$ and the Costa-Farber class  ${\mathfrak{v}}_X\in H^ 1(X\times X;\varmathbb{I}[\pi])$ can be defined as the cohomology classes induced by the following crossed homomorphisms respectively:
\[
\begin{array}{rcl}
\pi & \to & \mathrm{I}[\pi]\\
a&\mapsto &a-1
\end{array}
\qquad
\begin{array}{rcl}
\pi\times \pi & \to & \varmathbb{I}[\pi]\\
(a, b)&\mapsto &a\bar{b}-1.
\end{array}
\]

When $X=B\pi$ is the classifying space of $\pi$, the (co)homology groups of $B\pi$ and $B\pi\times B\pi=B(\pi\times \pi)$ can be identified to the (co)homology groups of $\pi$ and $\pi\times \pi$ and the classes ${\mathfrak b}_{\pi}:={\mathfrak b}_{B\pi}$ and ${\mathfrak v}_{\pi}:={\mathfrak v}_{B\pi}$ are the classes corresponding to the following degree 1 cocycles defined on the associated bar resolutions by formulas as above:
\[
\begin{array}{rcl}
\beta_{\pi}:\mathrm{Bar}_1(\pi) & \to & \mathrm{I}[\pi]\\
\left[a\right]  &\mapsto &a-1
\end{array}
\qquad
\begin{array}{rcl}
\nu_{\pi}:\mathrm{Bar}_1(\pi\times \pi) & \to & \varmathbb{I}[\pi]\\
\left[(a, b)\right] &\mapsto &a\bar{b}-1.
\end{array}
\]

In the general case, if $\gamma:X\to B\pi$ is a map inducing an isomorphism of fundamental groups, then, through isomorphisms $\mathrm{I}[\pi]\cong \gamma^*\mathrm{I}[\pi]$ and $\varmathbb{I}[\pi]\cong (\gamma\times \gamma)^*\varmathbb{I}[\pi]$, we have the following identifications:
\[
{\mathfrak b}_X=\gamma^*{\mathfrak b}_{\pi} \qquad {\mathfrak v}_X=(\gamma\times \gamma)^*{\mathfrak v}_{\pi}. 
\]
For any $k\geq 1$, we have:
\begin{itemize}
	\item If ${\mathfrak b}^k_X\in H^k(X;\mathrm{I}^k[\pi])$ does not vanish, then $\cat(X)\geq k$.
		\item If ${\mathfrak v}^k_X\in H^k(X\times X;\varmathbb{I}^k[\pi])$ does not vanish, then $\TC(X)\geq k$.
\end{itemize}
Furthermore, we have: 

\begin{thm} \label{BS-CF}  Suppose that $\dim(X)=n\geq 2$. 
\begin{probs}
	\item \textup{(\cite{Berstein}, \cite{Sch}, \cite{DR})} \quad $\cat(X)=n$ if and only 
if ${\mathfrak b}^n_X\in H^n(X;\mathrm{I}^n[\pi])$ does not vanish.
\item \textup{(\cite{CostaFarber})} \quad $\TC(X)=2n$ if and only if ${\mathfrak v}^{2n}_X\in H^{2n}(X\times X;\varmathbb{I}^{2n}[\pi])$ does not vanish.
\end{probs}	
\end{thm}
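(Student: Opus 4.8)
The plan is to deduce the theorem from Schwarz's genus theory together with elementary obstruction theory, handling both parts by a single template. Observe first that the two displayed inequalities $\cat(X)\le\dim(X)$ and $\TC(X)\le2\dim(X)$, combined with the two bulleted implications recalled just above, already give the ``if'' directions: if ${\mathfrak b}^n_X\neq0$ then $n\le\cat(X)\le n$, and if ${\mathfrak v}^{2n}_X\neq0$ then $2n\le\TC(X)\le2n$. So the content still to be proved is the two ``only if'' directions, equivalently their contrapositives
\[
{\mathfrak b}^n_X=0\ \Longrightarrow\ \cat(X)\le n-1,\qquad {\mathfrak v}^{2n}_X=0\ \Longrightarrow\ \TC(X)\le2n-1 .
\]
The key reformulation is that $\cat$ and $\TC$ are sectional categories: $\cat(X)=\secat(p_0)$, where $p_0\colon P_0X\to X$, $\gamma\mapsto\gamma(1)$, $P_0X=\{\gamma\in X^{[0,1]}:\gamma(0)=\ast\}$, has fibre $\Omega X$; and $\TC(X)=\secat(\pi_X)$, where $\pi_X\colon X^{[0,1]}\to X\times X$, $\gamma\mapsto(\gamma(0),\gamma(1))$, again has fibre $\Omega X$.

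Next I would invoke Schwarz's characterization of the genus \cite{Sch}: $\secat(p)\le k$ if and only if the $(k{+}1)$-fold fibrewise join $p^{\ast(k+1)}$ admits a section. I apply this with $p=p_0$ and $k=n-1$, and with $p=\pi_X$ and $k=2n-1$. Since $\Omega X$ is nonempty with $\pi_0(\Omega X)=\pi$, loop concatenation identifies the local system $\widetilde H_0(\Omega X;\Z)$ on $X$ with the augmentation ideal $\mathrm{I}[\pi]$, and the corresponding local system on $X\times X$ attached to $\pi_X$ with $\varmathbb{I}[\pi]$ --- precisely the module structures recalled above. By the K\"unneth formula for joins, $\widetilde H_*\bigl((\Omega X)^{\ast m}\bigr)$ is concentrated in degree $m-1$, where it is $\mathrm{I}^m[\pi]$ (respectively $\varmathbb{I}^m[\pi]$ over $X\times X$); hence $p_0^{\ast n}$ has $(n{-}2)$-connected fibre and $\pi_X^{\ast(2n)}$ has $(2n{-}2)$-connected fibre. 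Because $\dim X=n$ and $\dim(X\times X)=2n$, ordinary obstruction theory then shows that the existence of a section is, in each case, governed by a \emph{single} obstruction class --- the primary obstruction --- lying in $H^n\bigl(X;\mathrm{I}^n[\pi]\bigr)$, respectively $H^{2n}\bigl(X\times X;\varmathbb{I}^{2n}[\pi]\bigr)$, with no higher obstructions and no indeterminacy. Thus $\cat(X)\le n-1$ if and only if the first of these classes vanishes, and $\TC(X)\le2n-1$ if and only if the second does.

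It remains --- and this is the crux --- to identify these primary obstructions with ${\mathfrak b}^n_X$ and ${\mathfrak v}^{2n}_X$ respectively. Here I would use the multiplicativity of the primary obstruction under fibrewise join: the primary obstruction of $p^{\ast a}\ast p^{\ast b}=p^{\ast(a+b)}$ is the image of the cup product of those of $p^{\ast a}$ and $p^{\ast b}$ under the natural pairing $\mathrm{I}^a[\pi]\otimes\mathrm{I}^b[\pi]\cong\mathrm{I}^{a+b}[\pi]$ (respectively for $\varmathbb{I}$). Inducting on the number of join factors reduces the problem to identifying the primary obstruction of the single fibration $p_0$ with ${\mathfrak b}_X\in H^1(X;\mathrm{I}[\pi])$, and that of $\pi_X$ with ${\mathfrak v}_X\in H^1(X\times X;\varmathbb{I}[\pi])$; but this is essentially the definition of these classes --- on a $1$-cell of $X$ labelled by $a\in\pi$, the obstruction to extending the canonical section of $p_0$ over the $1$-skeleton is precisely $a-1\in\mathrm{I}[\pi]=\widetilde H_0(\Omega X)$, i.e.\ the crossed homomorphism defining ${\mathfrak b}_X$, and similarly $\pi_X$ yields $(a,b)\mapsto a\bar b-1$, defining ${\mathfrak v}_X$. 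Because both the multiplicativity statement and this last step involve twisted coefficients, the cleanest way to make them precise is to argue on a universal example: all the classes in sight are pulled back from $B\pi$ along $\gamma\colon X\to B\pi$ (respectively $\gamma\times\gamma$), so one may compute with the bar-resolution cocycles $\beta_\pi$ and $\nu_\pi$ recalled above and then pull back. I expect this identification of the complete obstruction with the cup-power of the defining class to be the main obstacle; everything else is a repackaging of Schwarz's genus theory \cite{Sch} and standard obstruction theory (for part (1) this is the Berstein-Schwarz theorem \cite{Berstein},\cite{Sch},\cite{DR}; for part (2) it is due to Costa-Farber \cite{CostaFarber}). A minor point to dispose of along the way is that the iterated fibrewise join is only fibre-homotopy equivalent to the corresponding Ganea fibration, which is harmless for sectional category.
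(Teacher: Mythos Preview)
The paper does not prove this theorem: it is stated with attributions to \cite{Berstein}, \cite{Sch}, \cite{DR} and \cite{CostaFarber} and then used as a black box, so there is no in-paper proof to compare against. Your outline is a faithful sketch of the arguments in those references: Schwarz's join criterion for sectional category, the connectivity of the iterated join of $\Omega X$, the resulting single primary obstruction in the top dimension, and its identification with the cup-power of the Berstein--Schwarz (respectively Costa--Farber) class via multiplicativity of primary obstructions under fibrewise join. That is exactly the approach of the cited papers, so your proposal is correct and aligned with the literature the paper invokes.

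One small caution worth flagging: the step where you assert that the primary obstruction of the $m$-fold fibrewise join equals the $m$-th cup power of the primary obstruction of the single fibration is the genuinely delicate point (as you acknowledge), and historically this is where the restriction $n\ge2$ enters --- in dimension $1$ the statement fails (e.g.\ $\cat(S^1)=1$ but ${\mathfrak b}_{S^1}\ne0$ would give only $\cat\ge1$, and the ``only if'' direction is vacuous there, while for $n\ge2$ one needs the obstruction machinery to bite). Berstein's original argument required $n\ge3$; the extension to $n=2$ is precisely the contribution of \cite{DR}. Your obstruction-theoretic template covers $n\ge2$ uniformly once the multiplicativity is established carefully, but you should be aware that making this rigorous in low dimensions is where the work lies.
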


\noindent \textbf{When $\pi$ is abelian.} 
In this case, $\chi: \pi \times \pi \to \pi$, $(a,b)\mapsto a\bar{b}$ is a homomorphism which satisfies $\chi(a,b)\cdot z=(a,b) \cdot z$ for any $(a,b)\in \pi \times \pi$, $z \in I(\pi)$. In other words, the $\Z[\pi\times \pi]$-module $\chi^*(\mathrm{I}[\pi])$ is exactly the $\Z[\pi\times \pi]$- module $\varmathbb{I}[\pi]$. With the notation above,  we also have, for any $(a,b)\in \pi \times \pi$,
\[\nu_{\pi}([(a,b)])=\beta_\pi([a\bar{b}])=\beta_{\pi}([\chi(a,b)]).\]
We then have ${\mathfrak v}_{\pi}=\chi^*{\mathfrak b}_{\pi}$ in $H^1(\pi\times \pi;\varmathbb{I}[\pi])=H^1(B\pi\times B\pi;\varmathbb{I}[\pi])$ (see also \cite{FM}), and, for any $k$,
\[{\mathfrak v}^k_X= (\gamma\times \gamma)^*{\mathfrak v}^k_{\pi}= (\gamma\times \gamma)^*\chi^*{\mathfrak b}^k_{\pi} \,\, \mbox{ in } H^k(X\times X;\varmathbb{I}^k[\pi]).\]

\subsection{Nonmaximality condition for \texorpdfstring{$\TC(M)$}{TCM}}
We first consider an $n$-dimensional closed manifold $M$ with $\pi=\pi_1(M)$ not necessarily abelian. Let $\widetilde{\Z}$ be the $\Z[\pi]$ orientation module of $M$, that is, $\widetilde{\Z}$ represents the integers with an action of $\pi$ given by $a\cdot t=\omega(a)t$ ($a\in \pi$, $t$ an integer) where $\omega=\omega_M:\pi \to \{\pm 1\}$ is the homomorphism determined by the first Stiefel-Whitney class of $M$. The orientation module of a product $M\times N$ is given by the action $(a,b)\cdot t=\omega_M(a)\omega_N(b) t$. 

Let $[M]\in H_n(M;  \widetilde{\Z})\cong \Z$ be the (twisted) fundamental class of $M$. Using the K\"unneth formula (for twisted homology, see for instance \cite{G}), we can see that the fundamental class of $M\times M$, 
$[M\times M]\in H_{2n}(M\times M; \widetilde{\Z})=\Z$, can be identified with the cross product $[M]\times [M]$.

By Poincar\'e duality, Theorem \ref{BS-CF}(2) implies that $\TC(M)=2n$ if and only if \[[M\times M] \cap {\mathfrak v}^{2n}_M \in H_0(M\times M;\varmathbb{I}^{2n}[\pi]\otimes \widetilde{\Z})= \varmathbb{I}^{2n}[\pi]\otimes_{\pi\times \pi} \widetilde{\Z}\] does not vanish. Considering as before a map $\gamma:M\to B\pi$ inducing an isomorphism of fundamental groups, we equip $\widetilde{\Z}$ with the $\pi$ action induced by the isomorphism $\pi_1(\gamma)$,  making $\gamma$ compatible with the action. In particular, $\gamma_*:H_*(M; \widetilde{\Z}) \to H_*(B\pi; \widetilde{\Z})$ is well-defined. The same is done for the products $M\times M$ and $\gamma\times \gamma$. Then, through
the identification  ${\mathfrak v}_X=(\gamma\times \gamma)^*{\mathfrak v}_{\pi}$ and the naturality of the cap-product given by the diagram
\begin{equation*}
\xymatrixcolsep{5pc}
\xymatrix{
	H_{2n}(M\times M;\widetilde{\Z}) \otimes H^{2n}(M\times M;\varmathbb{I}^{2n}[\pi]) \ar@<-8ex>[d]^{(\gamma\times \gamma)_*} \ar[r]^-{\cap} & 
	\varmathbb{I}^{2n}[\pi]\otimes_{\pi\times \pi}\widetilde{\Z} \ar[d]^{=} \\
	H_{2n}(B\pi \times B\pi;\widetilde{\Z}) \otimes H^{2n}(B\pi \times B\pi;\varmathbb{I}^{2n}[\pi])  \ar@<-8ex>[u]_{(\gamma\times \gamma)^*} 
	\ar[r]^-{\cap} & 
	\varmathbb{I}^{2n}[\pi]\otimes_{\pi\times \pi}\widetilde{\Z}
}
\end{equation*}
we see that $\TC(M)=2n$ if and only if $(\gamma \times \gamma)_*[M\times M] \cap {\mathfrak v}^{2n}_{\pi}$ does not vanish or, equivalently, if and only if the following composite is not trivial.
\begin{equation}  \label{eq:composite} \tag{$\dag$}
\Z=H_{2n}(M\times M;\widetilde{\Z}) \xrightarrow{(\gamma\times \gamma)_*}  H_{2n}(B\pi\times B\pi; \widetilde{\Z}) \xrightarrow{\cap \,\mathfrak{v}_{\pi}^{2n} }  \varmathbb{I}^{2n}[\pi]\otimes_{\pi\times \pi} \widetilde{\Z}.
\end{equation}
In what follows, we will use the notation \[\mathfrak{m}=\gamma_*([M])\in H_n(B\pi; \widetilde{\Z})=H_n(\pi; \widetilde{\Z}).\] 
We then have
\[\mathfrak{m}\times \mathfrak{m}=\gamma_*([M])\times \gamma_*([M])=(\gamma\times \gamma)_*([M\times M])\in H_{2n}(B\pi\times B\pi;\widetilde{\Z})=H_{2n}(\pi\times \pi;\widetilde{\Z}).\]

\noindent \textbf{When $X=M$ is a closed manifold with $\pi=\pi_1(M)$ 
abelian.} In this case, the homomorphism $\chi: \pi \times \pi \to \pi$, $(a,b)\mapsto a\bar{b}$ induces a map $B\pi\times B\pi=B(\pi\times \pi) \to B\pi$, which will also be denoted by $\chi$. This map is compatible with any fixed $\Z[\pi]$ orientation module since, for any $\omega:\pi \to \{\pm 1\}$ and any $(a,b)\in \pi \times \pi$, one has $\omega(a\bar b)=\omega(a)\omega(b)$. Consequently $\chi$ induces a map $\chi_*: H_*(B\pi\times B\pi; \widetilde{\Z})\to H_*(B\pi; \widetilde{\Z})$ and we have:

\begin{prop} \label{prop:TC:abelian} Let $M$ be an $n$-dimensional connected closed manifold with abelian fundamental group $\pi=\pi_1(M)$. If $\chi_*(\mathfrak{m}\times \mathfrak{m})=0$ in $H_{2n}(B\pi; \widetilde{\Z})$ then $\TC(M)< 2n$.
\end{prop}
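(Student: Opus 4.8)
The plan is to push the cap-product obstruction in \eqref{eq:composite} forward along the homomorphism $\chi\colon B\pi\times B\pi=B(\pi\times\pi)\to B\pi$. By the discussion preceding the statement, $\TC(M)=2n$ if and only if the composite \eqref{eq:composite} is nontrivial; since it sends a generator of $\Z=H_{2n}(M\times M;\widetilde{\Z})$ to $(\gamma\times\gamma)_*[M\times M]\cap\mathfrak{v}_\pi^{2n}=(\mathfrak{m}\times\mathfrak{m})\cap\mathfrak{v}_\pi^{2n}$, this amounts to the nonvanishing of
\[
(\mathfrak{m}\times\mathfrak{m})\cap\mathfrak{v}_\pi^{2n}\in\varmathbb{I}^{2n}[\pi]\otimes_{\pi\times\pi}\widetilde{\Z}=H_0\bigl(B\pi\times B\pi;\varmathbb{I}^{2n}[\pi]\otimes\widetilde{\Z}\bigr).
\]
So it suffices to prove that this class vanishes whenever $\chi_*(\mathfrak{m}\times\mathfrak{m})=0$ in $H_{2n}(B\pi;\widetilde{\Z})$.

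First I would record how $\chi$ interacts with the coefficient systems. Because $\pi$ is abelian, $\varmathbb{I}[\pi]=\chi^*\mathrm{I}[\pi]$ as $\Z[\pi\times\pi]$-modules, hence $\varmathbb{I}^{2n}[\pi]=\chi^*\mathrm{I}^{2n}[\pi]$; the orientation module of $B\pi\times B\pi$ is $\chi^*\widetilde{\Z}$ since $\omega(a\bar b)=\omega(a)\omega(b)$; and $\mathfrak{v}_\pi^{2n}=\chi^*\mathfrak{b}_\pi^{2n}$, as already observed above in the abelian case. With these identifications in place, the projection formula (naturality of the cap product) applied to $\chi$ gives
\[
\chi_*\bigl((\mathfrak{m}\times\mathfrak{m})\cap\mathfrak{v}_\pi^{2n}\bigr)=\chi_*(\mathfrak{m}\times\mathfrak{m})\cap\mathfrak{b}_\pi^{2n}\quad\text{in }\mathrm{I}^{2n}[\pi]\otimes_{\pi}\widetilde{\Z},
\]
where on the left-hand side $\chi_*$ denotes the map induced on $H_0$ by $\chi$ with coefficients in $\varmathbb{I}^{2n}[\pi]\otimes\widetilde{\Z}=\chi^*\bigl(\mathrm{I}^{2n}[\pi]\otimes\widetilde{\Z}\bigr)$.

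The remaining ingredient is that this $\chi_*$ on $H_0$ is in fact an isomorphism $\varmathbb{I}^{2n}[\pi]\otimes_{\pi\times\pi}\widetilde{\Z}\xrightarrow{\ \cong\ }\mathrm{I}^{2n}[\pi]\otimes_{\pi}\widetilde{\Z}$. Indeed, for a surjective homomorphism of groups $\chi\colon G\twoheadrightarrow Q$ and a $Q$-module $A$, the $G$-coinvariants of $\chi^*A$ and the $Q$-coinvariants of $A$ agree (both are $A$ modulo the subgroup generated by the elements $q\cdot a-a$), and $\chi\colon\pi\times\pi\to\pi$, $(a,b)\mapsto a\bar b$, is onto. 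Consequently $(\mathfrak{m}\times\mathfrak{m})\cap\mathfrak{v}_\pi^{2n}=0$ if and only if $\chi_*(\mathfrak{m}\times\mathfrak{m})\cap\mathfrak{b}_\pi^{2n}=0$, and the hypothesis $\chi_*(\mathfrak{m}\times\mathfrak{m})=0$ clearly forces the latter. Hence the composite \eqref{eq:composite} is trivial and $\TC(M)<2n$.

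I expect the one point needing care to be the middle step: one must check that the cap-product pairing $H_{2n}\otimes H^{2n}\to H_0$ is natural for a map of spaces compatible with the relevant local systems, and identify precisely the coefficient homomorphism that $\chi$ induces on $H_0$. Both are standard (see the references on twisted (co)homology cited above) once the identifications $\varmathbb{I}[\pi]=\chi^*\mathrm{I}[\pi]$ and $\widetilde{\Z}=\chi^*\widetilde{\Z}$ are fixed; the rest of the argument is purely formal.
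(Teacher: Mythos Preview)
Your argument is correct and follows essentially the same route as the paper: you use the identification $\mathfrak{v}_\pi^{2n}=\chi^*\mathfrak{b}_\pi^{2n}$ together with the naturality (projection formula) of the cap product along $\chi$, and then observe that the induced map on $H_0$ is an isomorphism because $\chi$ is surjective. The paper packages these same ingredients into a commutative square, but the content is identical.
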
 

\begin{proof}
	From the considerations above and the equality ${\mathfrak v}^{2n}_{\pi}= \chi^*{\mathfrak b}^{2n}_{\pi}$ we can form the following commutative diagram
\[	 \xymatrix{
		\Z=H_{2n}(M\times M;\widetilde{\Z}) \ar[r]^-{(\gamma\times \gamma)_*} & H_{2n}(B\pi\times B\pi; \widetilde{\Z}) \ar[d]_{\chi_*} \ar[r]^-{\cap \,\mathfrak{v}_{\pi}^{2n} } &\varmathbb{I}^{2n}[\pi]\otimes_{\pi\times \pi} \widetilde{\Z}\ar[d]_{}^{\cong}\\
		& H_{2n}(B\pi; \widetilde{\Z}) \ar[r]^{\cap \,\mathfrak{b}_{\pi}^{2n} } & \mathrm{I}^{2n}[\pi]\otimes_{\pi} \widetilde{\Z}.	}
\]
The right-hand vertical map corresponds to the map $\chi_*: H_*(B\pi\times B\pi; \varmathbb{I}^{2n}[\pi]\otimes \widetilde{\Z})\to H_*(B\pi; \mathrm{I}^{2n}[\pi]\otimes\widetilde{\Z})$ in degree $0$. It is induced by the identity of the underlying $\Z$-module $I^{2n}(\pi)\otimes \widetilde{\Z}$ and is an isomorphism on the coinvariants because $\chi$ is surjective. 

Since $[M\times M]$ is a generator of $\Z=H_{2n}(M\times M;\widetilde{\Z})$ we see that $\chi_*(\mathfrak{m}\times \mathfrak{m})=0$ implies that the composite in the top line of the diagram is trivial which in turn implies that $\TC(M)<2n$, see \eqref{eq:composite} and the surrounding discussion.
	\end{proof}

As indicated in the introduction, Theorem \ref{thm:TC} will follow from Proposition \ref{prop:TC:abelian} together with Proposition \ref{prop-general} which is established in Section 3. We note that the converse of Proposition \ref{prop:TC:abelian} is also true, see Proposition \ref{prop:equivalence}. 

\subsection{LS-category of the cofibre of the diagonal map.} \label{sec:cofibre}
Let $X$ be a path-connected CW complex of dimension $n$ and let $C_{\Delta}(X)=(X\times X) /\Delta (X)$ be the cofibre of the diagonal map. As $\dim(C_{\Delta}(X))=2n$ we have $\cat(C_{\Delta}(X))\leq 2n$.
As is the case for Theorem \ref{thm:TC}, Theorem \ref{thm:cofibre} will follow from Proposition \ref{prop-general} together with a nonmaximality condition for $\cat(C_{\Delta}(X))$ when $X=M$ is a manifold, namely Proposition \ref{prop:cofibre}. As mentioned before, this criterion can be deduced from Dranishnikov's approach in \cite{Dranishnikov}. Here we organize the argument in order to emphasize the interrelations between the Berstein-Schwarz and Costa-Farber classes. We also show that for a CW-complex with abelian fundamental group, $\TC$ is maximal if and only if the LS-category of the cofibre of the diagonal map is maximal.

We first consider a general $n$-dimensional path-connected CW-complex with fundamental group $\pi=\pi_1(X)$. 
Let $\gamma:X\to B\pi$ be a map which induces an isomorphism of fundamental groups. Denote by $\alpha: \pi=\pi_1(X)\to H=H_1(X)$ the abelianization homomorphism and by $\chi:H\times H\to H$ the homomorphism given by $\chi(g,h)=g\bar h$.  As before, we will use the same symbols for 
the induced maps $B\pi\to BH$ and $BH\times BH \to BH$. We will also consider the morphism $I(\alpha):I(\pi)\to I(H)$ and will denote by $\varmathbb{A}$ the augmentation ideal $I(H)$ equipped with the $\pi \times \pi$ action induced by $\alpha\times \alpha$. Explicitly, for $(a,b) \in \pi \times \pi$ and $\sum n_i h_i$ in $I(H)$, we have 
\[(a,b)\cdot \sum n_i h_i:=(\alpha(a),\alpha(b))\cdot \sum n_i h_i=\sum n_i \alpha(a) h_i \overline{\alpha(b)}=\chi(\alpha(a), \alpha({b}))\sum n_i h_i\]
so that $\varmathbb{A}=(\alpha\times \alpha)^*\chi^* \mathrm{I}[H]$.
Through the identification $(\gamma \times \gamma)^*\varmathbb{A} \cong \varmathbb{A}$ we consider $\varmathbb{A}$ as a system of local coefficients on $X\times X$. The restriction to $\Delta(X)$ of the composite
\[X\times X \xrightarrow{\gamma\times \gamma}B\pi \times B\pi \xrightarrow{\alpha\times \alpha}BH \times BH \xrightarrow{{\chi}} BH\]
is trivial and factors through $C_{\Delta}(X)$ giving a commutative diagram
\begin{equation}\label{diagram:cofibre}
\tag{${\dag\dag}$}
\xymatrix{
	X\times X \ar[d]^-{q}\ar[r]^{ \gamma \times \gamma\ } & B\pi\times B\pi \ar[r]^{\alpha \times \alpha\ }& BH\times BH \ar[d]^{\chi}\\
	C_{\Delta}(X) \ar[rr]^-{\xi} &  &BH.
	&&
}
\end{equation}

\begin{lem} \label{lemma:cofibre} With notation as above, we have 
	\begin{probs}
	\item $\mathfrak{b}_{C_{\Delta}(X)}=\xi^* \mathfrak{b}_H$.
		\item $I(\alpha) \mathfrak{v}_X=q^*\mathfrak{b}_{C_{\Delta}(X)}$ in $H^1(X\times X; \varmathbb{A})$.
		\item $\forall k \geq n+2$, $I^k(\alpha) \mathfrak{v}^k_X=0 \iff q^*\mathfrak{b}^k_{C_{\Delta}(X)}=0 \iff \mathfrak{b}^k_{C_{\Delta}(X)}=0$.
	\end{probs}
	\end{lem}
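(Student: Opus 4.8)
The plan is to prove the three parts of Lemma~\ref{lemma:cofibre} in order, since each feeds into the next. For part (1), I would argue exactly as for the identification ${\mathfrak b}_X = \gamma^*{\mathfrak b}_\pi$ recalled in the excerpt: the class ${\mathfrak b}_{C_\Delta(X)} \in H^1(C_\Delta(X); \mathrm I[H])$ is, by definition, classified by the crossed homomorphism $\pi_1(C_\Delta(X)) \to I(H)$, $h \mapsto h-1$. By \cite[Proposition 3.4.1]{Dranishnikov} one has $\pi_1(C_\Delta(X)) \cong H = H_1(X)$, and the map $\xi : C_\Delta(X) \to BH$ in diagram~\eqref{diagram:cofibre} induces (an isomorphism, hence in particular) the identity on this $\pi_1$ after the identification; since ${\mathfrak b}_H = {\mathfrak b}_{BH}$ is classified by the same crossed homomorphism $H \to I(H)$, naturality of the Berstein--Schwarz class under maps inducing isomorphisms (or just identity) on $\pi_1$ gives ${\mathfrak b}_{C_\Delta(X)} = \xi^*{\mathfrak b}_H$. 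The one point to check carefully is that $\xi_*$ on $\pi_1$ is indeed the identity (equivalently the canonical iso), which is where Dranishnikov's computation of $\pi_1(C_\Delta(X))$ and the explicit factorization in \eqref{diagram:cofibre} are used.

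For part (2), I would compute both sides as classes in $H^1(X\times X; \varmathbb A)$ at the level of crossed homomorphisms on $\pi_1(X\times X) = \pi \times \pi$. Recall ${\mathfrak v}_X = (\gamma\times\gamma)^*{\mathfrak v}_\pi$ is classified by $(a,b) \mapsto a\bar b - 1 \in I(\pi)$. Applying the coefficient map $I(\alpha): I(\pi) \to I(H)$ sends this to $(a,b) \mapsto \alpha(a\bar b) - 1 = \alpha(a)\overline{\alpha(b)} - 1 = \chi(\alpha(a),\alpha(b)) - 1 \in I(H)$, which is exactly the crossed homomorphism pulling back ${\mathfrak b}_H$ along $\chi \circ (\alpha\times\alpha)$. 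Now chase diagram~\eqref{diagram:cofibre}: the bottom composite $q^*\xi^*{\mathfrak b}_H$ equals $(\gamma\times\gamma)^*(\alpha\times\alpha)^*\chi^*{\mathfrak b}_H$, which by the crossed-homomorphism computation is precisely $I(\alpha){\mathfrak v}_X$; combining with part (1), $q^*{\mathfrak b}_{C_\Delta(X)} = q^*\xi^*{\mathfrak b}_H = I(\alpha){\mathfrak v}_X$. The only subtlety is bookkeeping the coefficient identifications $\varmathbb A \cong (\gamma\times\gamma)^*(\alpha\times\alpha)^*\chi^*\mathrm I[H]$, which is exactly the definition of $\varmathbb A$ given just before the lemma, so this is routine.

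For part (3), I would use the cofibre sequence $X\times X \xrightarrow{q} C_\Delta(X) \to \Sigma(X\times X)_+ \to \cdots$ — or more precisely the long exact sequence in cohomology with local coefficients associated to the cofibration $\Delta(X) \hookrightarrow X\times X \to C_\Delta(X)$. The relevant point is that $\Delta(X) \simeq X$ has dimension $n$, so $H^k(\Delta(X); \varmathbb A|_{\Delta}) = 0$ for $k \geq n+1$; hence for $k \geq n+1$ the restriction map $H^k(X\times X; \varmathbb A) \to H^k(\Delta(X); \cdots)$ is zero and the map $q^*: H^k(C_\Delta(X); \mathrm I^k[H]) \to H^k(X\times X; \varmathbb A)$ — wait, more carefully: one uses that $q^*$ fits in the exact sequence $H^{k-1}(\Delta(X)) \to H^k(C_\Delta(X)) \xrightarrow{q^*} H^k(X\times X) \to H^k(\Delta(X))$, so for $k \geq n+2$ both flanking terms $H^{k-1}(\Delta(X))$ and $H^k(\Delta(X))$ vanish, making $q^*$ an isomorphism in degrees $k \geq n+2$ (with the appropriate power $\mathrm I^k$ of the coefficient module throughout, functoriality in coefficients being unaffected). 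This gives $q^*{\mathfrak b}^k_{C_\Delta(X)} = 0 \iff {\mathfrak b}^k_{C_\Delta(X)} = 0$, and combined with part (2) (raised to the $k$-th tensor power, using that $q^*$ is multiplicative for the relevant products) gives $I^k(\alpha){\mathfrak v}^k_X = q^*{\mathfrak b}^k_{C_\Delta(X)}$, so $I^k(\alpha){\mathfrak v}^k_X = 0 \iff q^*{\mathfrak b}^k_{C_\Delta(X)} = 0$.

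The main obstacle I anticipate is part (3): one must be careful that the cofibration long exact sequence behaves well with the twisted coefficient systems $\mathrm I^k[H]$ on $C_\Delta(X)$ and their pullbacks to $X\times X$ and $\Delta(X)$ — in particular that these pullbacks agree under the maps in the cofibre sequence and that the dimension bound on $\Delta(X) \simeq X$ is the right one to force the flanking cohomology groups to vanish (hence the hypothesis $k \geq n+2$ rather than $k\geq n+1$). A secondary care point is establishing part (2) in the $k$-th power: one needs $q^*$ to respect the cup-product-type pairing defining ${\mathfrak b}^k$, i.e. that $q^*({\mathfrak b}^k_{C_\Delta(X)}) = (q^*{\mathfrak b}_{C_\Delta(X)})^k = (I(\alpha){\mathfrak v}_X)^k = I^k(\alpha){\mathfrak v}^k_X$, which is formal naturality of the external/internal tensor pairing but should be stated.
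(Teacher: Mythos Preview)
Your proposal is correct and follows essentially the same approach as the paper's proof: part (1) via Dranishnikov's identification $\pi_1(C_\Delta(X))\cong H$ with $\xi$ inducing an isomorphism on $\pi_1$, part (2) by chasing diagram~\eqref{diagram:cofibre} (the paper phrases this via the identities $\chi^*\mathfrak b_H=\mathfrak v_H$ and $(\alpha\times\alpha)^*\mathfrak v_H=I(\alpha)\mathfrak v_\pi$, which is exactly your crossed-homomorphism computation repackaged), and part (3) via the long exact sequence of the cofibration $X\xrightarrow{\Delta}X\times X\xrightarrow{q}C_\Delta(X)$ together with the vanishing of $H^{k-1}(X;\varmathbb A^k_{|X})$ and $H^k(X;\varmathbb A^k_{|X})$ for $k\ge n+2$. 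The care points you flag (compatibility of the local systems along the cofibre sequence, and naturality of cup products under $q^*$) are precisely the ones the paper addresses or leaves implicit.
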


\begin{proof}
	As shown in \cite[Proposition 3.12]{Dranishnikov}, the fundamental group 
of $C_{\Delta}(X)$ is isomorphic to $H$. Moreover the homomorphism induced by the map $q:X\times X \to C_{\Delta}(X)$ can be identified (up to an isomorphism) with the homomorphism
	\[\pi\times \pi \xrightarrow{\alpha\times \alpha}H\times H \xrightarrow{\chi} H\] and $\xi$ induces an isomorphism of fundamental groups.
	This corresponds to the fact that the diagram
	$$\xymatrix{
		\pi \ar[d]\ar[r]^{\Delta} & \pi\times \pi \ar[d]^{\chi(\alpha\times \alpha)}\\
		1 \ar[r] & H=\pi/[\pi,\pi]
	}$$
	is a push-out of groups. Thus, $\mathfrak{b}_{C_{\Delta}(X)}=\xi^* \mathfrak{b}_H$ as asserted in the first statement. 
	
	Through the identification $\mathrm{I}[H]=\xi^*\mathrm{I}[H]$, we also see that $q^*(\mathrm{I}[H])=\varmathbb{A}$ and  that $q$ induces a morphism $$q^*: H^k(C_{\Delta}(X); \mathrm{I}^k[H]) \to H^k(X\times X; \varmathbb{A}^k).$$
	From the diagram \eqref{diagram:cofibre}, we thus have $q^*\mathfrak{b}_{C_{\Delta}(X)}= (\gamma \times \gamma)^*(\alpha\times \alpha)^* \chi^* \mathfrak{b}_H$. Since $H$ is abelian, we have $\chi^* \mathfrak{b}_H=\mathfrak{v}_H$. The second statement then follows from the equality $(\alpha\times \alpha)^*\mathfrak{v}_H=I(\alpha)\mathfrak{v}_{\pi}$ and the identification $I(\alpha)\mathfrak{v}_{X}=(\gamma\times \gamma)^*(I(\alpha)\mathfrak{v}_{\pi})$.
	
	For the third statement, we consider, for $k\geq n+2$, the long exact sequence in cohomology of the cofibration $X\stackrel{\Delta}{\to} X\times X \stackrel{q}{\to} C_{\Delta}(X)$:
	\[\to H^{k-1}(X;\varmathbb{A}_{|X}^ {k})\stackrel{\delta}{\to} H^{k}(C_{\Delta}(X);\mathrm{I}^ {k}[H])\stackrel{q^ *}{\to} H^{k}(X\times X;\varmathbb{A}^{k})\stackrel{\Delta^ *}{\to}H^{k}(X;\varmathbb{A}_{|X}^ {k})\stackrel{\delta}{\to} \cdots\]
		The local system  $\varmathbb{A}_{|X}$ on $X$ induced by $\Delta$ is trivial. But notice that the long exact sequence need not split in general because the possible retractions of $\Delta$ are not necessarily compatible with the structure of the local coefficients. However, since $\dim X=n$ and $k\geq n+2$, we have $H^{k-1}(X; \varmathbb{A}_{|X}^{ k})=H^{k}(X; \varmathbb{A}_{|X}^{ k})=0$ so that 
	\[q^*:H^{k}(C_{\Delta}(X); \mathrm{I}^{k}[H])\to H^{k}(X\times X; \varmathbb{A}^{k})\]
	is an isomorphism and the third statement follows from the first statement.
	\end{proof}

As a consequence, we have:

\begin{prop}\label{prop:TCcofibre}
	Let $X$ be a path-connected CW-complex of dimension $n\geq 2$. If $\cat(C_{\Delta}(X)))=2n$ then $\TC(X)=2n$. If, moreover $\pi_1(X)$ is abelian, then the converse is true.
\end{prop}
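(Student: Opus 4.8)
The plan is to translate both occurrences of ``maximality'' into non-vanishing statements for characteristic classes in top degree and to use Lemma~\ref{lemma:cofibre}(3) as the bridge between them. Write $\pi=\pi_1(X)$ and $H=H_1(X)$. The numerical input is that, since $n\geq 2$, the dimension $2n=\dim(C_\Delta(X))$ satisfies $2n\geq n+2$; this is exactly the range in which Lemma~\ref{lemma:cofibre}(3) is available, so it is legitimate to take $k=2n$ there.

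First I would prove the implication $\cat(C_\Delta(X))=2n\Rightarrow\TC(X)=2n$. Applying Theorem~\ref{BS-CF}(1) to the $2n$-dimensional complex $C_\Delta(X)$ (whose fundamental group is $H$), the hypothesis is equivalent to $\mathfrak{b}^{2n}_{C_\Delta(X)}\neq 0$ in $H^{2n}(C_\Delta(X);\mathrm{I}^{2n}[H])$. By Lemma~\ref{lemma:cofibre}(3) with $k=2n$ this is equivalent to $I^{2n}(\alpha)\mathfrak{v}^{2n}_X\neq 0$ in $H^{2n}(X\times X;\varmathbb{A}^{2n})$. Now $I^{2n}(\alpha)\mathfrak{v}^{2n}_X$ is, by its very definition, the image of $\mathfrak{v}^{2n}_X$ under the coefficient homomorphism $H^{2n}(X\times X;\varmathbb{I}^{2n}[\pi])\to H^{2n}(X\times X;\varmathbb{A}^{2n})$ induced by $I^{2n}(\alpha)\colon\varmathbb{I}^{2n}[\pi]\to\varmathbb{A}^{2n}$; since any such homomorphism sends $0$ to $0$, the non-vanishing of the image forces $\mathfrak{v}^{2n}_X\neq 0$. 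Theorem~\ref{BS-CF}(2) then gives $\TC(X)=2n$.

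For the converse, assume moreover that $\pi$ is abelian. Then the abelianization $\alpha\colon\pi\to H$ is an isomorphism, hence so is $I(\alpha)\colon I(\pi)\to I(H)$ and therefore $I^{2n}(\alpha)\colon\varmathbb{I}^{2n}[\pi]\to\varmathbb{A}^{2n}$ is an isomorphism of local coefficient systems on $X\times X$. Consequently the coefficient homomorphism $H^{2n}(X\times X;\varmathbb{I}^{2n}[\pi])\to H^{2n}(X\times X;\varmathbb{A}^{2n})$ is an isomorphism, so now $\mathfrak{v}^{2n}_X\neq 0$ is \emph{equivalent} to $I^{2n}(\alpha)\mathfrak{v}^{2n}_X\neq 0$. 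Running the chain of equivalences in the opposite direction: $\TC(X)=2n$ gives $\mathfrak{v}^{2n}_X\neq 0$ by Theorem~\ref{BS-CF}(2), hence $I^{2n}(\alpha)\mathfrak{v}^{2n}_X\neq 0$, hence $\mathfrak{b}^{2n}_{C_\Delta(X)}\neq 0$ by Lemma~\ref{lemma:cofibre}(3), hence $\cat(C_\Delta(X))=2n$ by Theorem~\ref{BS-CF}(1).

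I do not expect a serious obstacle: once Lemma~\ref{lemma:cofibre} is granted, the argument is a diagram of equivalences. The two points that need care are (i) checking that the degree $2n$ lies in the range $k\geq n+2$ where $q^*$ is an isomorphism — this is where $n\geq 2$ enters, and it reflects why the statement excludes low-dimensional degeneracies — and (ii) noticing that ``$\mathfrak{v}^{2n}_X\neq 0\Leftrightarrow I^{2n}(\alpha)\mathfrak{v}^{2n}_X\neq 0$'' only holds automatically when $\alpha$ (equivalently $I(\alpha)$) is an isomorphism, which is the sole place the abelian hypothesis is used and hence explains why only one implication survives in general.
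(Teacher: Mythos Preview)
Your proof is correct and follows essentially the same approach as the paper: both translate maximality into non-vanishing of $\mathfrak{b}^{2n}_{C_\Delta(X)}$ and $\mathfrak{v}^{2n}_X$ via Theorem~\ref{BS-CF}, use Lemma~\ref{lemma:cofibre}(3) with $k=2n$ (invoking $n\geq 2$ to ensure $2n\geq n+2$) as the bridge, and observe that for abelian $\pi$ the map $\alpha$ is an isomorphism so that the implication becomes an equivalence. Your write-up makes explicit the coefficient-change step (non-vanishing of $I^{2n}(\alpha)\mathfrak{v}^{2n}_X$ forces non-vanishing of $\mathfrak{v}^{2n}_X$), which the paper leaves implicit, but the argument is the same.
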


\begin{proof}
	The space $C_{\Delta}(X)$ has dimension $2n$. By Theorem \ref{BS-CF}, $\cat(C_{\Delta}(X))=2n$ implies that $\mathfrak{b}_{C_{\Delta}(X)}^{2n}\neq 0$. Since $n\geq 2$, Lemma \ref*{lemma:cofibre} implies that ${\mathfrak v}^{2n}_X\neq 0$. Thus, by Theorem \ref{BS-CF}, $\TC(X)=2n$. If $\pi_1(X)$ is abelian, then the homomorphism $\alpha$ is the identity and 
the third statement of Lemma \ref*{lemma:cofibre} yields ${\mathfrak v}^{2n}_X\neq 0$ if and only if $\mathfrak{b}_{C_{\Delta}(X)}^{2n}\neq 0$.
\end{proof}

We now suppose that $X=M$ is a manifold of dimension $n\geq 2$.
 As before $\mathfrak{m}=\gamma_*([M])\in H_n(\pi; \widetilde{\Z})$. Note that  $\alpha_*(\mathfrak{m})\in H_n(H;\widetilde{\Z})$ where the action of $H$ on $\widetilde{\Z}$ is induced by $\alpha$ and is well-defined because the kernel of $\alpha$, namely $[\pi,\pi]$, acts trivially on $\widetilde{\Z}$. Dranishnikov's nonmaximality condition for $\cat(C_{\Delta}(M))$ can then be stated as:

\begin{prop}[\cite{Dranishnikov}] \label{prop:cofibre} Let $M$ be an $n$-dimensional connected closed manifold. We have $\cat(C_{\Delta}(M))\leq 2n-1$ if and only if $\chi_*(\alpha_*(\mathfrak{m})\times \alpha_*(\mathfrak{m}))=0$ in $H_{2n}(H;\widetilde{\Z})$.
\end{prop}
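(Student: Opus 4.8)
The strategy is to run the same kind of Poincaré-duality argument that was used for Proposition~\ref{prop:TC:abelian}, but now with $C_\Delta(M)$ in place of $M\times M$ and the Berstein--Schwarz class $\mathfrak b_{C_\Delta(M)}$ in place of the Costa--Farber class. First I would invoke Theorem~\ref{BS-CF}(1): since $\dim C_\Delta(M)=2n\ge 2$, we have $\cat(C_\Delta(M))=2n$ if and only if $\mathfrak b^{2n}_{C_\Delta(M)}\in H^{2n}(C_\Delta(M);\mathrm I^{2n}[H])$ does not vanish. The point is to translate this cohomological nonvanishing into the homological condition $\chi_*(\alpha_*(\mathfrak m)\times\alpha_*(\mathfrak m))=0$.

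**Key steps.** The first issue is that $C_\Delta(M)$ is generally not a closed manifold, so there is no naive Poincaré duality on it; instead I would work on $M\times M$, which \emph{is} a closed $2n$-manifold. Using Lemma~\ref{lemma:cofibre}(3) with $k=2n\ge n+2$ (valid since $n\ge 2$), the map $q^*\colon H^{2n}(C_\Delta(M);\mathrm I^{2n}[H])\to H^{2n}(M\times M;\varmathbb A^{2n})$ is an isomorphism, and it sends $\mathfrak b^{2n}_{C_\Delta(M)}$ to $I^{2n}(\alpha)\,\mathfrak v^{2n}_M$ by Lemma~\ref{lemma:cofibre}(2). So $\mathfrak b^{2n}_{C_\Delta(M)}\ne 0$ if and only if $I^{2n}(\alpha)\,\mathfrak v^{2n}_M\ne 0$ in $H^{2n}(M\times M;\varmathbb A^{2n})$. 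Now I apply Poincaré duality on the closed manifold $M\times M$ with twisted coefficients: $I^{2n}(\alpha)\,\mathfrak v^{2n}_M\ne 0$ if and only if $[M\times M]\cap \bigl(I^{2n}(\alpha)\,\mathfrak v^{2n}_M\bigr)\ne 0$ in $H_0(M\times M;\varmathbb A^{2n}\otimes\widetilde{\Z})=\varmathbb A^{2n}\otimes_{\pi\times\pi}\widetilde{\Z}$. Then, exactly as in the discussion preceding Proposition~\ref{prop:TC:abelian}, naturality of the cap product along $\gamma\times\gamma$ and the identification $I(\alpha)\mathfrak v_X=(\gamma\times\gamma)^*(I(\alpha)\mathfrak v_\pi)$ turn this into the nonvanishing of the composite
\[
\Z=H_{2n}(M\times M;\widetilde{\Z})\xrightarrow{(\gamma\times\gamma)_*}H_{2n}(B\pi\times B\pi;\widetilde{\Z})\xrightarrow{\ \cap\, I^{2n}(\alpha)\mathfrak v^{2n}_\pi\ }I^{2n}(H)\otimes_{\pi\times\pi}\widetilde{\Z}.
\]
Since $I(\alpha)\mathfrak v_\pi=(\alpha\times\alpha)^*\mathfrak v_H=(\alpha\times\alpha)^*\chi^*\mathfrak b_H$ (using abelianness of $H$), I can factor this composite through $H_{2n}(BH;\widetilde{\Z})$: it equals
\[
\Z\xrightarrow{\ \chi_*\circ(\alpha\times\alpha)_*\circ(\gamma\times\gamma)_*\ }H_{2n}(BH;\widetilde{\Z})\xrightarrow{\ \cap\,\mathfrak b^{2n}_H\ }\mathrm I^{2n}[H]\otimes_H\widetilde{\Z},
\]
after identifying the target $\varmathbb A^{2n}\otimes_{\pi\times\pi}\widetilde{\Z}$ with $\mathrm I^{2n}[H]\otimes_H\widetilde{\Z}$ via the isomorphism induced by $\chi$ on coinvariants (surjectivity of $\chi$, as in the proof of Proposition~\ref{prop:TC:abelian}). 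The image of the generator $[M\times M]$ under the first map is precisely $\chi_*((\alpha\times\alpha)_*(\mathfrak m\times\mathfrak m))=\chi_*(\alpha_*(\mathfrak m)\times\alpha_*(\mathfrak m))$.

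**Conclusion and main obstacle.** Putting these equivalences together: $\cat(C_\Delta(M))=2n$ iff $\mathfrak b^{2n}_{C_\Delta(M)}\ne 0$ iff $[M\times M]\cap\bigl(I^{2n}(\alpha)\mathfrak v^{2n}_M\bigr)\ne 0$ iff $\chi_*(\alpha_*(\mathfrak m)\times\alpha_*(\mathfrak m))\cap\mathfrak b^{2n}_H\ne 0$; but if $\chi_*(\alpha_*(\mathfrak m)\times\alpha_*(\mathfrak m))=0$ this is automatically zero, giving $\cat(C_\Delta(M))\le 2n-1$, and for the converse one uses that $[M\times M]$ generates $H_{2n}(M\times M;\widetilde{\Z})\cong\Z$ and that the composite is a homomorphism out of $\Z$, so it is trivial as soon as it kills the generator --- meaning the whole chain of ``iff''s can be traversed in both directions. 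The step I expect to require the most care is the twisted Poincaré duality on $M\times M$ together with the bookkeeping of the various coefficient systems: checking that $\varmathbb A_{|X}$ is trivial (already noted in Lemma~\ref{lemma:cofibre}), that the orientation module $\widetilde{\Z}$ descends compatibly through $\alpha$ (its kernel $[\pi,\pi]$ acts trivially, as noted before the statement), and that the identification of coinvariants $\varmathbb A^{2n}\otimes_{\pi\times\pi}\widetilde{\Z}\cong\mathrm I^{2n}[H]\otimes_H\widetilde{\Z}$ is the correct one so that the generator of $\Z$ is tracked to $\chi_*(\alpha_*(\mathfrak m)\times\alpha_*(\mathfrak m))$ exactly. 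Once these identifications are in place, the argument is a diagram chase formally parallel to the proof of Proposition~\ref{prop:TC:abelian}, and indeed the whole statement can alternatively be extracted from Dranishnikov's treatment in \cite{Dranishnikov}.
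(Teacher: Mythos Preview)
Your argument for the implication ``$\chi_*(\alpha_*(\mathfrak m)\times\alpha_*(\mathfrak m))=0 \Rightarrow \cat(C_\Delta(M))\le 2n-1$'' is correct and matches the paper's approach: Lemma~\ref{lemma:cofibre} plus Poincar\'e duality on $M\times M$ plus naturality of cap products, exactly parallel to Proposition~\ref{prop:TC:abelian}.

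The converse, however, has a genuine gap. Your chain of equivalences terminates at
\[
\cat(C_\Delta(M))\le 2n-1 \iff \chi_*(\alpha_*(\mathfrak m)\times\alpha_*(\mathfrak m))\cap\mathfrak b^{2n}_H=0
\quad\text{in}\quad \mathrm I^{2n}[H]\otimes_H\widetilde{\Z},
\]
not at $\chi_*(\alpha_*(\mathfrak m)\times\alpha_*(\mathfrak m))=0$ in $H_{2n}(H;\widetilde{\Z})$. Your remark about $[M\times M]$ generating $\Z$ only shows that the composite $\Z\to \mathrm I^{2n}[H]\otimes_H\widetilde{\Z}$ is trivial iff it kills the generator; it does \emph{not} show that the cap-product map $-\cap\mathfrak b^{2n}_H\colon H_{2n}(BH;\widetilde{\Z})\to \mathrm I^{2n}[H]\otimes_H\widetilde{\Z}$ is injective, which is what you would need to conclude that the class itself vanishes.

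The paper closes this gap by a different, topological argument: if $\cat(C_\Delta(M))\le 2n-1$, then the classifying map $\xi\colon C_\Delta(M)\to BH$ factors up to homotopy through a $(2n-1)$-dimensional complex (as in \cite[Proposition~4.3]{Dranishnikov-Lens}), and this factorization can be made compatible with the action on $\widetilde{\Z}$. Hence $\xi_* q_*[M\times M]=0$ in $H_{2n}(BH;\widetilde{\Z})$ directly, and since $\xi_* q_*[M\times M]=\chi_*(\alpha_*(\mathfrak m)\times\alpha_*(\mathfrak m))$, the converse follows. You should insert this step (or, alternatively, supply an independent proof that $-\cap\mathfrak b^{2n}_H$ is injective on $H_{2n}$, which is not addressed in your proposal).
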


\begin{proof}
	\noindent 
Using Lemma \ref{lemma:cofibre}, our proof essentially follows the arguments of \cite[Proposition 2.4.2]{Dranishnikov} without requiring the notion of pseudo-manifolds. We have
	$$ \cat(C_{\Delta}(M))\leq 2n-1 \iff q^*\mathfrak{b}_{C_{\Delta}(M)}^{2n}=0 \mbox{ in } H^{2n}(M\times M, \varmathbb{A}^{2n}).$$
	Using the equality $\mathfrak{b}_{C_{\Delta}(M)}=\xi^*\mathfrak{b}_H$ and Poincar\'e duality for $M\times M$, this continues as 
	\[
	\cat(C_{\Delta}(M))\leq 2n-1 \iff  [M\times M] \cap (q^*\xi^*(\mathfrak{b}_H))^{2n}=0 \,\,\mbox{in } H_0(M\times M, \varmathbb{A}^{2n}\otimes \widetilde{\Z}).\]
	The naturality of cap-products yields the diagram
	\begin{equation*}
	\xymatrixcolsep{5pc}
	\xymatrix{
		H_{2n}(M\times M;\widetilde{\Z}) \otimes H^{2n}(M\times M;\varmathbb{A}^{2n}) \ar@<-8ex>[d]^{\xi_* q_*} \ar[r]^-{\cap} & 
		\varmathbb{A}^{2n}\otimes_{\pi\times \pi}\widetilde{\Z} \ar[d]^{\cong} \\
		H_{2n}(BH;\widetilde{\Z}) \otimes H^{2n}(BH;\mathrm{I}^{2n}[H])  \ar@<-8ex>[u]_{q^*\xi^*} 
		\ar[r]^-{\cap} & 
		\mathrm{I}^{2n}[H]\otimes_{H}\widetilde{\Z}
	}
	\end{equation*}
	The righthand vertical map is induced by the identity of the underlying $\Z$-module $I^{2n}(H)\otimes \widetilde{\Z}$ and is an isomorphism on the coinvariants because the homomorphism $\xi_*q_*\cong \chi(\alpha\times \alpha):\pi\times \pi \to H$ is surjective. We then have
\[\cat(C_{\Delta}(M))\leq 2n-1\iff  \xi_*q_*[M\times M]\cap \mathfrak{b}_H^ {2n}=0.\]
	
	Since $ \xi_*q_*[M\times M]=\chi_*(\alpha_*{\mathfrak m}\times \alpha_*{\mathfrak m})$, we can conclude that the vanishing of this class implies $\cat(C_{\Delta}(M))\leq 2n-1$. Conversely, if $\cat(C_{\Delta}(M))\leq 2n-1$, then the map $\xi$, and consequently $\xi q$, can be factored up to homotopy through an $(2n-1)$ dimensional space as, for instance, in \cite[Proposition 4.3]{Dranishnikov-Lens}). As this factorization can be made in a compatible way with the action on $\widetilde{\Z}$, we obtain $\xi_*q_*[M\times M]=0$ and therefore $\chi_*(\alpha_*{\mathfrak m}\times \alpha_*{\mathfrak m})=0$.
\end{proof}

We conclude this section by noting the following.

\begin{prop}\label{prop:equivalence}
	Let $M$ be an $n$-dimensional connected closed manifold. If $\pi_1(M)$ is abelian, then the following are equivalent:
	\begin{itemize}
		\item $\TC(M)=2n$
		\item $\cat(C_{\Delta}(M))=2n$
		\item $\chi_*(\mathfrak{m}\times \mathfrak{m})\neq 0$ in $H_{2n}(\pi_1(M);\widetilde{\Z})$.
	\end{itemize}
\end{prop}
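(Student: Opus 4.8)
The plan is to chain together the criteria already established in this section. Recall that Proposition~\ref{prop:TCcofibre} already gives, for any path-connected CW-complex $X$ of dimension $n\geq 2$ with abelian $\pi_1(X)$, the equivalence $\TC(X)=2n \iff \cat(C_{\Delta}(X))=2n$. Since $M$ is in particular such a complex, it remains only to connect these two conditions to the vanishing statement for $\chi_*(\mathfrak{m}\times\mathfrak{m})$, and this is exactly what Proposition~\ref{prop:cofibre} does: because $\pi=\pi_1(M)$ is abelian, the abelianization homomorphism $\alpha$ is the identity, so $H=\pi$, $\alpha_*(\mathfrak{m})=\mathfrak{m}$, and Proposition~\ref{prop:cofibre} reads $\cat(C_{\Delta}(M))\leq 2n-1 \iff \chi_*(\mathfrak{m}\times\mathfrak{m})=0$ in $H_{2n}(\pi;\widetilde{\Z})$. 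Taking contrapositives, $\cat(C_{\Delta}(M))=2n \iff \chi_*(\mathfrak{m}\times\mathfrak{m})\neq 0$.

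So the proof I would write is essentially two sentences: first invoke Proposition~\ref{prop:TCcofibre} (valid since $\pi_1(M)$ is abelian and $n\geq 2$) to get $\TC(M)=2n \Leftrightarrow \cat(C_{\Delta}(M))=2n$; then invoke Proposition~\ref{prop:cofibre}, specialized via $\alpha=\mathrm{id}$, to get $\cat(C_{\Delta}(M))=2n \Leftrightarrow \chi_*(\mathfrak{m}\times\mathfrak{m})\neq 0$ in $H_{2n}(\pi_1(M);\widetilde{\Z})$. Combining the two biconditionals yields the three-way equivalence. One could alternatively derive the equivalence of the first and third conditions directly: by Proposition~\ref{prop:TC:abelian} the vanishing $\chi_*(\mathfrak{m}\times\mathfrak{m})=0$ already implies $\TC(M)<2n$, giving one implication for free, and the reverse implication is the content remarked on just after that proof (``the converse of Proposition~\ref{prop:TC:abelian} is also true''), which is precisely what this proposition records.

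There is no real obstacle here; the statement is a bookkeeping corollary of the machinery set up earlier in the section, and the only thing to be careful about is the remark that $\cat(C_{\Delta}(M)) \le 2n$ always (established at the start of Section~\ref{sec:cofibre}), so that ``$\leq 2n-1$'' and ``$\neq 2n$'' are interchangeable, and similarly that $\TC(M)\leq 2n$ by the general bound $\TC(X)\leq 2\dim(X)$, so ``$<2n$'' and ``$\neq 2n$'' coincide for $\TC(M)$ as well. With those trivial observations in place the equivalence follows immediately, and I would present it in just a few lines.

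\begin{proof}
Since $\pi_1(M)$ is abelian and $n=\dim(M)\geq 2$, Proposition~\ref{prop:TCcofibre} gives the equivalence
\[
\TC(M)=2n \iff \cat(C_{\Delta}(M))=2n.
\]
Moreover, when $\pi_1(M)$ is abelian the abelianization homomorphism $\alpha\colon\pi_1(M)\to H_1(M)$ is the identity, so $H=\pi_1(M)$ and $\alpha_*(\mathfrak{m})=\mathfrak{m}$. As $\cat(C_{\Delta}(M))\leq 2n$ always, Proposition~\ref{prop:cofibre} then reads
\[
\cat(C_{\Delta}(M))=2n \iff \chi_*(\mathfrak{m}\times\mathfrak{m})\neq 0 \ \text{ in } H_{2n}(\pi_1(M);\widetilde{\Z}).
\]
Combining these two equivalences yields the claim.
\end{proof}
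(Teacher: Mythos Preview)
Your proof is correct and matches the paper's intent: the paper states Proposition~\ref{prop:equivalence} without an explicit proof, presenting it as an immediate summary of Propositions~\ref{prop:TCcofibre} and~\ref{prop:cofibre}, which is exactly what you do. The only minor point is that you tacitly assume $n\geq 2$ (needed to invoke Proposition~\ref{prop:TCcofibre} and Theorem~\ref{BS-CF}), whereas the proposition as stated does not; but the case $n=1$ is trivial since then $M=S^1$, $H_{2}(\Z;\Z)=0$, and $\TC(S^1)=\cat(C_\Delta(S^1))=1$, so all three conditions fail.
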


\section{Calculations in  \texorpdfstring{$H_*(G;\widetilde{\Z})$}{H} for 
\texorpdfstring{$G$}{G} abelian}  \label{sec:abelian}

\subsection{Main statement}\label{subsection:statement}
Let $G$ be a finitely generated abelian group and $\chi:G\times G \to G$ the homomorphism $(g,h)\mapsto g\bar{h}$. We assume $G$ is given with an homomorphism $\omega:G\to \{\pm 1\}$ and consider the associated $\Z[G]$- and $\Z[G\times G]$-modules $\widetilde{\Z}$. As before, the action of $G\times G$ on an integer $t$ is given by $(g,h)\cdot t=\omega(g)\omega(h)t$ and the $\Z[G\times G]$-module $\widetilde{\Z}$ is isomorphic to the $\Z[G]\otimes \Z[G]$-module $\widetilde{\Z}\otimes\widetilde{\Z}$ where the action is componentwise. The homomorphism $\chi$ satisfies $\chi=\mu({\rm Id} \times j)$, where $\mu$ is the multiplication and $j:G\to G$, $g\mapsto \bar g$, is the inversion. Since $\omega(gh)=\omega(g)\omega(h)$ and $\omega(\bar g)=\omega(g)$, all these homomorphisms are compatible with the action on $\widetilde{\Z}$. 

Let $j_*$ and $\chi_*$ denote the morphisms induced by $j$ and $\chi$ in homology with coefficients in $\widetilde{\Z}$,  and denote by $\wedge$ the Pontryagin product $H_{*}(G;\widetilde{\Z})\otimes H_{*}(G;\widetilde{\Z})\to H_{*}(G;\widetilde{\Z})$ induced by $\mu$. 
Independent of the action being trivial or not, $\wedge$ is strictly anticommutative, that is, $\mathfrak{a}\wedge \mathfrak{b}=(-1)^{|\mathfrak{a}||\mathfrak{b}|}\mathfrak{b}\wedge \mathfrak{a}$ (where $|\cdot|$ denotes the degree) and $\mathfrak{a}\wedge \mathfrak{a}=0$ when $|\mathfrak{a}|$ is odd (see below or \cite[page 118]{Brown} where the argument is given for the trivial action case but can be readily adapted to the nontrivial action case). 

For a class $ \mathfrak{c}\in H_{*}(G;\widetilde{\Z})$, we then have 
\[\chi_*(\mathfrak{c}\times \mathfrak{c})=\mathfrak{c} \wedge j_*(\mathfrak{c}).\]
 The goal of this section is to establish the following proposition. As noted before, Theorems \ref{thm:TC} and \ref{thm:cofibre} will then follow from Propositions \ref{prop:TC:abelian} and \ref{prop:cofibre} and this result.

\begin{prop}\label{prop-general} For ${\mathfrak c} \in H_ n(G; \widetilde{\Z})$ with $n\geq 2$, we have $\chi_*(\mathfrak{c} \times \mathfrak{c}) =0$ 
when
\begin{probs}
	\item the action of $G$ on $\widetilde{\Z}$ is not trivial;
	\item the action of $G$ on $\widetilde{\Z}$ is trivial and $G$ is of one 
of the following forms (where $p$ is a prime, $r$ is a nonnegative integer, and $a,b,c,s$ are positive integers):
	\begin{enumerate}
		\item[(a)] $\Z^r$ with either $n$ odd, or with $n$ even such that $2n>r$;
		\item[(b)] $\Z^r\times \Z_{p^ a}$ with either $n$ odd such that $n>r$, or with $n$ even such that $n\geq r$;
		\item[(c)] $\Z^r\times \Z_{p^ a}\times \Z_{p^ b}$ with $r\leq 1$;
		\item[(d)] $\Z_{p^ a}\times \Z_{p^ b}\times \Z_{p^c}$;
		\item[(e)] $\Z^r \times (\Z_2)^s$ with either $n$ odd,  or with $n$ even such that $2n>r$. 
	\end{enumerate} 
\end{probs}
\end{prop}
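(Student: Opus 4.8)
The plan is to convert the statement into one about the Pontryagin ring of $G$ and then argue by degree and by arithmetic, handling the two cases differently. First I would record the reductions common to both. By the identity $\chi_*({\mathfrak c}\times{\mathfrak c})={\mathfrak c}\wedge j_*({\mathfrak c})$ recalled above, it suffices to understand the class ${\mathfrak c}\wedge j_*({\mathfrak c})\in H_{2n}(G;\widetilde{\Z})$. For $G$ abelian the inversion induces multiplication by $(-1)^n$ on $H_n(G;\Z)$ (cf.\ \cite{Brown}); for $\Z/2$ coefficients this becomes the identity on $H_n(G;\Z_2)$, and since $\omega\circ j=\omega$ the map $j_*$ is in any case defined on $H_n(G;\widetilde{\Z})$. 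Combining $j_*=(-1)^n$ with the strict anticommutativity of $\wedge$ (in particular ${\mathfrak a}\wedge{\mathfrak a}=0$ for $|{\mathfrak a}|$ odd), one sees that in part (2) the case $n$ odd is immediate, so there it remains only to show ${\mathfrak c}^2=0$ in $H_{2n}(G;\Z)$ when $n$ is even. Part (1) will be treated uniformly in $n$ by reduction mod $2$.

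For part (1), I would first split off a cyclic factor on which $\omega$ is nontrivial: writing $G$ as a product of infinite and prime-power cyclic groups, at least one factor $C$ has $\omega|_C\ne 1$, which forces $C\cong\Z$ or $C\cong\Z_{2^k}$; then $G=C\times G'$ and $\widetilde{\Z}=\widetilde{\Z}_C\boxtimes\widetilde{\Z}_{G'}$. A direct computation with the standard (two-term, resp.\ periodic) free resolution of $\Z$ over $\Z[C]$ shows $H_*(C;\widetilde{\Z}_C)$ is $\Z_2$ in degree $0$ when $C\cong\Z$, and $\Z_2$ in each even degree when $C\cong\Z_{2^k}$, in particular has exponent dividing $2$ in every degree. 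By the Künneth theorem for twisted homology it follows that $H_n(G;\widetilde{\Z})$ has exponent dividing $2$ for all $n$, so $(H_*(G;\widetilde{\Z}),\wedge)$ is a graded-commutative ring with $2=0$. Reducing the coefficient map $\widetilde{\Z}\to\widetilde{\Z}/2\widetilde{\Z}=\Z_2$ turns $\wedge$ into the ordinary mod-$2$ Pontryagin product, and for $G$ abelian $H_*(G;\Z_2)$ is a tensor product of exterior and divided-power algebras (the graded dual of a tensor product of exterior and polynomial algebras), whose algebra generators all square to zero; by the Frobenius identity every positive-degree class of $H_*(G;\Z_2)$ squares to zero. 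Hence the image of ${\mathfrak c}\wedge j_*({\mathfrak c})$ in $H_{2n}(G;\Z_2)$ is $(\,\text{image of }{\mathfrak c}\,)^2=0$, so ${\mathfrak c}\wedge j_*({\mathfrak c})\in 2\,H_{2n}(G;\widetilde{\Z})=0$.

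For part (2) the action is trivial, so $\widetilde{\Z}=\Z$, and I would use that $H_*(G;\Z)=\Lambda^*(\Z^r)\otimes H_*(T;\Z)$ as graded rings, where $T$ is the ($p$-primary) torsion subgroup; there is no Tor term because $H_*(\Z^r;\Z)=\Lambda^*(\Z^r)$ is free. It then suffices to determine $H_*(T;\Z)$ as a ring for $T$ one of the small $p$-groups $\Z_{p^a}$, $\Z_{p^a}\times\Z_{p^b}$, $\Z_{p^a}\times\Z_{p^b}\times\Z_{p^c}$, $(\Z_2)^s$, and for a class ${\mathfrak c}=\sum_I\alpha_I\otimes\tau_I\in H_n(G;\Z)$ (with $\alpha_I\in\Lambda^*(\Z^r)$, $\tau_I\in H_*(T;\Z)$) to expand ${\mathfrak c}^2=\sum_{I,J}\pm(\alpha_I\alpha_J)\otimes(\tau_I\tau_J)$. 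For $n$ even every surviving term is killed for one of a short list of reasons: $\alpha_I\alpha_J$ lands in $\Lambda^{>r}(\Z^r)=0$ (this is exactly where the rank inequalities $2n>r$, $n\ge r$, $r\le 1$ enter); or $\tau_I\tau_J$ vanishes — by degree reasons for $T=\Z_{p^a}$, by the interplay of the Tor/Bockstein classes for the two- and three-summand groups, and by "exponent $2$ plus positive-degree classes square to zero" for $(\Z_2)^s$ exactly as in part (1); or a symmetric pair of terms cancels by graded commutativity; or the coefficient $2$ produced by such a pair annihilates $\tau_I\tau_J$ because the latter has exponent $2$. The conditions (a)--(e) are precisely those that rule out any non-vanishing term, and the manifolds constructed in Section~\ref{sec:examples} show they cannot be relaxed.

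The main obstacle is the case analysis of part (2): one needs the explicit ring structure of $H_*(T;\Z)$ for the two- and three-summand $p$-groups, including how the classes arising from the Tor summands of the Künneth formula multiply with one another and with the $\Lambda^*(\Z^r)$-factors, together with careful bookkeeping of degrees and of the factors of $2$ coming from symmetric pairs — this is exactly the point at which a fourth torsion summand, or a rank one unit larger, would let a nonzero square ${\mathfrak c}^2$ survive. Part (1), by contrast, has a single subtlety, namely the exponent-$2$ statement, and the reduction to one cyclic factor via Künneth is the device that makes it transparent.
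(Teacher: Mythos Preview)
Your reduction $\chi_*({\mathfrak c}\times{\mathfrak c})={\mathfrak c}\wedge j_*({\mathfrak c})$ is the right starting point, and your treatment of part~(1) is essentially the paper's argument (split off a cyclic factor with nontrivial $\omega$, observe that $H_*(G;\widetilde{\Z})$ then has exponent~$2$, and conclude that ${\mathfrak c}\wedge j_*({\mathfrak c})={\mathfrak c}\wedge{\mathfrak c}$ is divisible by~$2$, hence zero). The paper carries this out at the chain level rather than by passing to $\Z_2$-coefficients, but the content is the same.

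The serious gap is in part~(2). Your claim that for $G$ abelian the inversion acts as $(-1)^n$ on $H_n(G;\Z)$ is \emph{false}, and the paper explicitly warns against it (see the paragraph immediately following the statement of Proposition~\ref{prop-general}, and Example~\ref{ex:cond_b}). The point is that multiplication by $m$ on $\Z_q$ acts on $H_{2k-1}(\Z_q;\Z)\cong\Z_q$ as multiplication by $m^k$, so inversion ($m=-1$) acts by $(-1)^k$, not $(-1)^{2k-1}=-1$. Concretely, $j_*$ is the identity on $H_3(\Z_3;\Z)$. Thus for $G=\Z^3\times\Z_3$ and ${\mathfrak c}=[1110]+[0003]\in H_3(G;\Z)$ one has $j_*({\mathfrak c})=-[1110]+[0003]\neq\pm{\mathfrak c}$, and indeed ${\mathfrak c}\wedge j_*({\mathfrak c})=2[1113]\neq 0$. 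So the odd-$n$ case is \emph{not} immediate, and your proposal leaves cases (2b), (2c), (2d) (and, to a lesser extent,~(2e)) unproved for odd~$n$.

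What the paper does instead is compute $j_*$ explicitly on each atomic chain complex (the formulas ${\mathbf j}[2k]=(p^a-1)^k[2k]$, ${\mathbf j}[2k-1]=(p^a-1)^k[2k-1]$, etc.) and then, for each group on the list, analyze ${\mathfrak c}\wedge j_*({\mathfrak c})$ directly: for $\Z_{p^a}\times\Z_{p^b}$ one shows $j_*=(-1)^m$ on $H_{2m-1}$ only \emph{after} identifying the Tor-generators and checking the action on each; for the three-factor group one must further split ${\mathfrak c}={\mathfrak c}'+{\mathfrak d}$ according to whether $j_*$ acts by $(-1)^q$ or $-(-1)^q$ and verify the cross term ${\mathfrak c}'\wedge{\mathfrak d}$ vanishes. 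Your sketch of the even-$n$ analysis (``every surviving term is killed for one of a short list of reasons'') is along the right lines, but the same kind of bookkeeping is needed for odd~$n$ as well, and cannot be bypassed by the incorrect formula $j_*=(-1)^n$.
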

The proof of this proposition will be given in Section \ref{subsection:proof-prop-general}. Note that, if $j_*(\mathfrak{c})=\pm \mathfrak{c}$ and $|\mathfrak{c}|$ is odd, then the strict anticommutativity of $\wedge$ 
immediately implies that $\chi_*(\mathfrak{c}\times \mathfrak{c})=\mathfrak{c} \wedge j_*(\mathfrak{c})=0$. However, we may have $j_*(\mathfrak{c})\neq\pm \mathfrak{c}$ (see Example \ref{ex:cond_b}). Our calculations 
will use explicit expressions of the morphism induced by $j$ at the chain level developed in Section \ref{subsection:Pontryagin}. In the course of 
the proof of Proposition \ref{prop-general}, we will frequently use the twisted K\"unneth formula given by the following split exact sequence 
with coefficients in the $\Z[G]$-, $\Z[H]$-, and $\Z[G\times H]$-modules $\widetilde{\Z}$ temporarily suppressed.
\[
0 \rightarrow \bigoplus\limits_{i+j=n}H_i(G)\otimes H_j(H)\rightarrow
H_n(G\times H)\rightarrow
\bigoplus\limits_{i+j=n-1}{\rm Tor}(H_i(G), H_j(H))\rightarrow
0
\] 
Here, as before, the action of $(g,h)\in G\times H$ on an integer $t$ is given by $(g,h)\cdot t=\omega_G(g)\omega_H(h)t$ and the $\Z[G\times H]$-module $\widetilde{\Z}$ is isomorphic to the $\Z[G]\otimes \Z[H]$-module $\widetilde{\Z}\otimes\widetilde{\Z}$ where the action is componentwise.

We finally note that although Proposition \ref{prop-general}(2) only considers groups of the form $\Z^r\times G_p$ where $p$ is a prime and $G_p$ is a finite product of $p$-primary cyclic groups $\Z_{p^a}$, more general 
statements might be valid. For instance (2b) is valid for any product $\Z^r \times \Z_q$ with $q\geq 2$, see Proposition \ref{torusxcyclic}.  More generally, for a finite number of pairwise relatively prime $p_i$, the K\"unneth formula permits us to see that, for any $r\geq 0$, we have 
\[H_*(\Z^r\times \Pi_iG_{p_i})= H_*(\Z^r) \otimes (\bigoplus\nolimits_{\Z} H_*(G_{p_i}))\]
Here, $\bigoplus_{\Z}$ is used to indicate the sum of algebras: in degree $0$ all the $H_0=\Z$ are identified and, for $p_i\neq p_j$, $ H_+(G_{p_i}) \wedge  H_+(G_{p_j})=0$. We can then see that if $\Z^r\times G_{p_i}$ belongs to the list considered in (2) for all $i$, then we will have $\chi_*(\mathfrak{c}\times \mathfrak{c})=0$ for any $\mathfrak{c} \in H_n(\Z^r\times \Pi_iG_{p_i})$ where $n\geq 2$.
 
\subsection{Pontryagin chain algebra and inversion.}\label{subsection:Pontryagin}
Recall that, given $P_{\bullet}\to \Z$ a free resolution of $\Z$ as a trivial $\Z[G]$-module,  $H_*(G;\widetilde{\Z})$ is the homology of the chain complex $P_{\bullet}\otimes_G\widetilde{\Z}$.

The Pontryagin product on $H_*(G;\widetilde{\Z})$ can be seen as induced by the shuffle product on the bar resolution $\mathrm{Bar}_{\bullet}(G)\to \Z$, that is, the $\Z[G]$-bilinear product naturally induced by the multiplication $\mu:G\times G\to G$ through the Eilenberg-Zilber equivalence $\mathrm{Bar}_{\bullet}(G)\otimes \mathrm{Bar}_{\bullet}(G)\stackrel{\sim}{\to} \mathrm{Bar}_{\bullet}(G\times G)$. This $\Z[G]$-bilinear product is compatible with the augmentation and satisfies the usual graded Leibniz rule, corresponding to what is called an \textit{admissible product} in 
\cite[page 118]{Brown}. Together with the multiplication of integers, it induces an algebra structure on the chain complex $\mathrm{Bar}_{\bullet}(G)\otimes_G\widetilde{\Z}$ which in turn induces the Pontryagin product on $H_*(G;\widetilde{\Z})$.

We will say that such a $\Z[G]$-free resolution of $\Z$, $P_{\bullet}\to \Z$, given with an admissible $\Z[G]$-bilinear product $\wedge:P_{\bullet}\otimes P_{\bullet}\to P_{\bullet}$ is a \textit{$\Z[G]$-Pontryagin chain algebra} and that the resulting $\Z$ chain algebra $P_{\bullet}\otimes_G\widetilde{\Z}$ is a $\Z$-Pontryagin chain algebra for $G$. In these terms, Example (5.4) of \cite[page 119]{Brown} means that if $A_{\bullet}$ is a $\Z[G]$-Pontryagin chain algebra (with product $\wedge_G$) and $B_{\bullet}$ is $\Z[H]$-Pontryagin chain algebra (with product $\wedge_H$) then the tensor product $A_{\bullet}\otimes B_{\bullet}$ with the product 
	\[(a\otimes b)\wedge (a'\otimes b')=(-1)^{|a'||b|}(a \wedge_G a') \otimes (b \wedge_H b')\]
	is a $\Z[G\times H]\cong \Z[G]\otimes \Z[H]$-Pontryagin chain algebra. It then follows that the tensor product 
\[A_{\bullet}\otimes_G\widetilde{\Z}\otimes B_{\bullet}\otimes_H\widetilde{\Z}\cong A_{\bullet}\otimes B_{\bullet}\otimes_{G\times H}\widetilde{\Z}\]
yields a $\Z$-Pontryagin chain algebra for $G\times H$.

To study one of the groups $G$ arising in Proposition \ref{prop-general}, we can thus fix a decomposition of $G$ as a product of infinite and primary cyclic groups and consider the tensor product of chain algebras associated to the factors. As it is more convenient for our calculations we will follow this process.
 
We start with the classical small resolutions $P_{\bullet}(G)\to \Z$ associated to the cyclic groups $G=\Z$ and $G=\Z_{q}$ recorded below. In each case, we have at most a rank $1$ free $\Z[G]$-module in degree $i$ and we denote by $[i]$ a corresponding basis element. We recall the associated $\Z[G]$-bilinear product (as described in  \cite[page 118-120]{Brown}) which is strictly anticommutative and which we denote by $\wedge$. We also make explicit an augmentation preserving morphism induced by the inversion. This is a chain map $j_{\bullet}:P_{\bullet}(G) \to P_{\bullet}(G)$ which satisfies $j_{\bullet}([0])=[0]$ and, for any $i\geq 0$ and $g\in G$, $j_{\bullet}(g[i])=\bar g j_{\bullet}([i])$.

For $G=\Z=\langle u\rangle$, we consider the resolution given by 
\[
\xymatrix{
&\Z [\Z] \ar[r]^{1-u}& \Z [\Z] \ar[r]^{\varepsilon}&\Z\\
}\]
The product and the morphism $j_{\bullet}$ are given by
\[[0]\wedge [0]=[0] \quad  [1]\wedge [0]=[0]\wedge [1]=[1] \quad [1]\wedge [1]=0\]
and 
\[j_{\bullet}([0])=[0] \quad  j_{\bullet}([1])=-\bar u[1]. \]

For $G=\Z_q=\langle v ~|~v^q=1\rangle$, we consider the resolution given by
\[
\xymatrix{
\cdots \ar[r] &\Z [\Z_q]\ar[r]^{N_q(v)} & \Z [\Z_q] \ar[r]^{1-v} & \Z [\Z_q] \ar[r]^{N_q(v)} & \Z [\Z_q] \ar[r]^{1-v}& \Z [\Z_q] \ar[r]^{\varepsilon}&\Z
}\]
where $N_q(v)=1+v+\cdots+v^{q-1}$. Here we have
\begin{equation}\label{productformula}
\tag{$\ast$}
\left\{\begin{array}{l}
\arraycolsep=1.4pt\def\arraystretch{1.5}
~[2i]\wedge [2k]=\binom{i+k}{k}[2i+2k] \quad \quad [2i+1]\wedge [2k+1]=0 \\
\\
~[2i]\wedge [2k+1]=[2k+1]\wedge [2i]=\binom{i+k}{k}[2i+2k+1]
\end{array}\right.
\end{equation}
and 
\begin{equation}\label{productj}
\tag{$\ast\ast$}
j_{\bullet}([i])= N_{q-1}^k(v) [i] \qquad \mbox{if } i\in \{2k,2k-1\}.
\end{equation}
To check this last formula, observe that $1-\bar{v}=1-v^{q-1}=(1-v)N_{q-1}(v)$ and that $N_q(v)$ is fixed by the morphism $\Z[G]\to \Z[G]$ induced by $j\colon G \to G$.

Suppose now that $G$ is given with an action on $\widetilde{\Z}$. For $G=\Z$ and $\Z_q$, we will denote by $C_{\bullet}(G;\widetilde{\Z})$ the $\Z$-chain complex obtained by tensoring over $G$ the resolution above with  
$\widetilde{\Z}$ and by $\mathbf j$ the morphism induced by the inversion. The formulas for the product are the same as before and are, for all  cases (including the infinite cyclic case), 
completely described by (\ref{productformula}).

Note that if $g$ is of odd order then we necessarily have $g\cdot t=t$. This leads us to consider the following atomic cases:

\begin{enumerate}
\item[A1.] $G=\Z=\langle u\rangle$ with trivial action on $\widetilde{\Z}$, $u\cdot t=t$.
\\
$C_{\bullet}(G;\widetilde{\Z})$: $\xymatrix{
\quad \Z[1]  \ar[r]^{0}& \Z[0],  
}\quad {\mathbf j}[0]=[0] \qquad {\mathbf j}[1]= -[1]$

\smallskip

\item[A2.] $G=\Z_{p^a}=\langle v~|~ v^{p^a}=1\rangle$, $p$ prime, with trivial action on $\widetilde{\Z}$, $v\cdot t=t$.
\\
$C_{\bullet}(G;\widetilde{\Z})$: $\xymatrix{
\ar[r]^{0}&\Z[2k]  \ar[r]^{p^a}& \Z[2k-1] \ar[r]^{0} &\cdots \ar[r]^{p^a}&\Z[1]  \ar[r]^{0}& \Z[0]  
}$

\smallskip

 \noindent${\mathbf j}([2k-1])= (p^a-1)^k[2k-1] \qquad {\mathbf j}[2k]=(p^a-1)^k[2k]$

\smallskip

\item[A3.] $G=\Z=\langle u\rangle$ with nontrivial action on $\widetilde{\Z}$, $u\cdot t=-t$.
\\
$C_{\bullet}(G;\widetilde{\Z})$: $\xymatrix{
	\quad \Z[1]  \ar[r]^{2}& \Z[0],  
}\quad {\mathbf j}[0]=[0] \qquad {\mathbf j}[1]= [1]$

\smallskip

\item[A4.] $G=\Z_{2^a}=\langle v~|~ v^{2^a}=1\rangle$ with nontrivial action on $\widetilde{\Z}$, $v\cdot t=-t$.
\\
$C_{\bullet}(G;\widetilde{\Z})$: $\xymatrix{
\ar[r]^{2}&\Z[2k]  \ar[r]^{0}& \Z[2k-1] \ar[r]^{2} &\cdots \ar[r]^{0}&\Z[1]  \ar[r]^{2}& \Z[0]  
}$

\smallskip

 \noindent${\mathbf j}[k]=[k]$
\end{enumerate}

\begin{rem}
A similar approach to the study of $H_*(G;\Z)$ may be found in the recent 
article \cite{Hanke}, including  the determination of chain maps induced by homomorphisms $\Z_{p^ a}\to \Z_{p^ b}$ \cite[Proposition 6.4]{Hanke}. In the case of the inversion $\Z_{p^ a}\to \Z_{p^ a}, v\mapsto \bar{v}=v^{p^a-1}$, the underlying chain map at the level of the $\Z[\Z_{p^a}]$-resolution used in \cite{Hanke} differs slightly from our chain map $j_{\bullet}$ described in (\ref{productj}). Nevertheless the two maps are homotopic by virtue of \cite[Lemma 7.4]{Brown}.
\end{rem}

Given a product of infinite and primary cyclic groups $G=G_1\times \cdots \times G_l$, we will then consider the Pontryagin tensor chain algebra 
\[
C_{\bullet}(G;\widetilde{\Z})=C_{\bullet}(G_1;\widetilde{\Z})\otimes \cdots \otimes C_{\bullet}(G_l;\widetilde{\Z})
\] 
provided by these atomic cases. The morphism induced by inversion is given by
\[{\mathbf j}([i_1]\otimes \cdots \otimes [i_l])={\mathbf j}[i_1]\otimes \cdots \otimes {\mathbf j}[i_l]\]

We will often write $[i_1 ~ i_2 ~\cdots ~ i_l]$ (with the convention that the element is $0$ if $i_k<0$ for some $k$) instead of $[i_1]\otimes \cdots \otimes [i_l]$ for the basis elements of the tensor product and refer to these elements as monomials. 

The product on the tensor chain algebra $C_{\bullet}(G;\widetilde{\Z})$, inductively given by 
\[(a\otimes b)\wedge (a'\otimes b')=(-1)^{|a'||b|}(a \wedge a') \otimes (b \wedge b')\] 
is automatically anticommutative. We next state further properties of the chain and homology Pontryagin product which are important for our calculations. These properties could be deduced from a more general structure of a (chain) divided power algebra. Since we are here considering twisted coefficients, we include a short proof. See Remark \ref{rmk:divided power alg} for additional discussion.

\begin{prop} \label{prop:product properties} Let $G=G_1\times \cdots \times G_l$ be a product of infinite and primary cyclic groups. Then the anticommutative product $\wedge$ of the Pontryagin tensor chain algebra $C_{\bullet}(G;\widetilde{\Z})=C_{\bullet}(G_1;\widetilde{\Z})\otimes \cdots C_{\bullet}(G_l;\widetilde{\Z})$ (whose differential is denoted by $\partial$) has the following properties:
	\begin{probs}
		\item for any element $c$ of odd degree, $c\wedge c=0$ (i.e., $\wedge$ is strictly anticommutative);
		\item for any element $c$ of even degree, there exists $e$ such that $c\wedge c=2e$;
		\item for any element $c$ of odd degree, there exists $e$ such that $\partial c\wedge \partial c=2\partial e$.
			\end{probs}
	\end{prop}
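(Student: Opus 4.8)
The plan is to reduce everything to the monomial basis and the explicit product formula $(\ast)$ together with the tensor rule $(a\otimes b)\wedge(a'\otimes b')=(-1)^{|a'||b|}(a\wedge a')\otimes(b\wedge b')$. Statement (1) is essentially already recorded in the excerpt: the product on $C_\bullet(G;\widetilde{\Z})$ is built inductively from the strictly anticommutative products on the atomic factors via the Koszul-sign tensor rule, and a standard induction on the number of factors shows that such a tensor product of strictly anticommutative chain algebras is again strictly anticommutative. Concretely, for a monomial $c=[i_1\cdots i_l]$ of odd total degree at least one $i_k$ is odd, and then the $k$-th slot of $c\wedge c$ involves $[i_k]\wedge[i_k]=0$ by $(\ast)$ (the $[2k+1]\wedge[2k+1]=0$ clause) or by A1/A3; bilinearity and the anticommutativity relation $\mathfrak a\wedge\mathfrak b=(-1)^{|\mathfrak a||\mathfrak b|}\mathfrak b\wedge\mathfrak a$ promote this from monomials to all elements $c$ of odd degree. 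So (1) costs essentially nothing.

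For (2), the key observation is that on each atomic factor the self-product of an even-degree generator is already divisible by $2$: indeed $[2i]\wedge[2i]=\binom{2i}{i}[4i]$ and $\binom{2i}{i}$ is even for $i\geq 1$, while $[0]\wedge[0]=[0]$ is the unit and causes no trouble because a monomial of positive even total degree that is $[0]$ in some slot is handled by the inductive step on the remaining slots. The plan is: write a general even-degree element as a $\Z$-linear combination $c=\sum_\lambda n_\lambda\,m_\lambda$ of monomials, expand $c\wedge c=\sum_\lambda n_\lambda^2\,(m_\lambda\wedge m_\lambda)+\sum_{\lambda<\mu}n_\lambda n_\mu\,(m_\lambda\wedge m_\mu+m_\mu\wedge m_\lambda)$, note that the cross terms $m_\lambda\wedge m_\mu+m_\mu\wedge m_\lambda=(1+(-1)^{|m_\lambda||m_\mu|})m_\lambda\wedge m_\mu$ are divisible by $2$ since $|m_\lambda|=|m_\mu|$ is even, and reduce to showing each diagonal term $m_\lambda\wedge m_\lambda$ is divisible by $2$. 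For a monomial $m=[i_1\cdots i_l]$ of even total degree, iterating the tensor rule gives $m\wedge m=\pm\,([i_1]\wedge[i_1])\otimes\cdots\otimes([i_l]\wedge[i_l])$ — all the Koszul signs are of the form $(-1)^{|i_s||i_t|}$ and are thus harmless signs — and by the parity count an even number of the $i_k$ are odd; grouping the odd slots in pairs and using $[2k+1]\wedge[2k+1]=0$ only helps (if any odd slot occurs the whole monomial squares to zero), so the remaining case has all $i_k$ even with at least one positive, and then that slot contributes a factor $\binom{i_k+i_k}{i_k}\in 2\Z$. Hence $m\wedge m\in 2C_\bullet(G;\widetilde{\Z})$, and collecting the $n_\lambda^2$, the cross terms, and these diagonal terms exhibits $c\wedge c=2e$ for an explicit $e$.

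For (3), apply the graded Leibniz rule: for $c$ of odd degree $\partial(c\wedge c)=\partial c\wedge c+(-1)^{|c|}c\wedge\partial c=\partial c\wedge c-c\wedge\partial c$, but since $|\partial c|$ is even $c\wedge\partial c=(-1)^{|\partial c||c|}\partial c\wedge c=\partial c\wedge c$, so $\partial(c\wedge c)=0$ — this is consistent but not directly what we want. Instead, the cleaner route is to note $\partial c$ has even degree, apply (2) to the element $\partial c$ to get $\partial c\wedge\partial c=2e'$ for some $e'$, and then improve $e'$ to a boundary: differentiate, using Leibniz for an odd-degree element $\partial c$... wait, $\partial c$ is even. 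Better: from (2) applied to $c$ (even-degree statement applied to the odd-degree $c$ is not allowed), so instead take the element $c$ itself of odd degree, and use that $\partial c\wedge\partial c=\partial(\,c\wedge\partial c\,)$ up to sign by Leibniz, since $\partial(c\wedge\partial c)=\partial c\wedge\partial c+(-1)^{|c|}c\wedge\partial\partial c=\partial c\wedge\partial c$; thus $\partial c\wedge\partial c$ is already a boundary, namely $\partial(c\wedge\partial c)$, but we need it to be $2\partial e$. Combine the two facts: $\partial c\wedge\partial c$ equals both $\partial(c\wedge\partial c)$ and $2e'$ (from (2), since $\partial c$ has even degree); set $e$ so that... the honest move is to run the argument of (2) on $\partial c$ and track that the witness $e'$ can be chosen in the image of $\partial$. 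This is the one place that needs care: I expect the main obstacle to be verifying that the element $e$ produced in (2) for the special even-degree element $\partial c$ is itself a boundary, which should follow by choosing $e'=c\wedge\partial c$ up to a factor and reconciling it with the explicit $e'$ from the monomial computation, or more robustly by observing that since $\partial c\wedge\partial c$ is a boundary and is $2$-divisible with a chosen witness, one can take $e=\tfrac12(c\wedge\partial c)$ after checking integrality slot-by-slot from $(\ast)$. I would present (1) and (2) in full and handle (3) via this Leibniz-plus-divisibility bookkeeping, flagging it as the delicate step.
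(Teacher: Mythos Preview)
Your arguments for (1) and (2) are correct and follow the paper's line; your observation that the central binomial $\binom{2i}{i}$ is even for $i\ge 1$ is exactly what makes the monomial case of (2) go through, and your cross-term expansion $c\wedge c=\sum n_\lambda^2\,m_\lambda\wedge m_\lambda+2\sum_{\lambda<\mu}n_\lambda n_\mu\,m_\lambda\wedge m_\mu$ is the same one the paper uses.

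For (3), however, there is a genuine gap. Your proposed witness $e=\tfrac12(c\wedge\partial c)$ is \emph{not} integral for a general odd-degree element $c$. Take $G=(\Z_p)^3$ with $p$ odd and trivial action (three copies of case~A2), and let $c=[120]+[012]$ in degree~$3$. Then $\partial[120]=-p[110]$, $\partial[012]=-p[011]$, and a direct check gives
\[
c\wedge\partial c=-p\bigl([120]\wedge[011]\bigr)=-p[131],
\]
since all other terms involve an $[1]\wedge[1]$ factor and vanish. For odd $p$ this is not in $2C_\bullet$, so $\tfrac12(c\wedge\partial c)$ is not an element of the chain complex. (Statement~(3) still holds here because $\partial c\wedge\partial c=0$ in this example.) The ``checking integrality slot-by-slot'' you propose therefore cannot succeed for arbitrary $c$.

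The paper's remedy is to reduce to monomials \emph{before} invoking the divisibility of $c\wedge\partial c$. For $c=\sum\lambda_i c_i$ one expands
\[
\partial c\wedge\partial c=\sum_i\lambda_i^2\,\partial c_i\wedge\partial c_i+2\sum_{i<j}\lambda_i\lambda_j\,\partial(c_i\wedge\partial c_j),
\]
using that $\partial c_i\wedge\partial c_j+\partial c_j\wedge\partial c_i=2\partial c_i\wedge\partial c_j=2\partial(c_i\wedge\partial c_j)$ since $|\partial c_i|$ is even and $\partial^2=0$. The cross terms are thus already of the form $2\partial(\cdot)$, and one is left with the diagonal terms. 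For a \emph{monomial} $c_i$ of odd degree one then shows by induction on the number of tensor factors that $c_i\wedge\partial c_i=2e_i$: writing $c_i=\sigma\otimes[i_l]$, if $|\sigma|$ is odd then $c_i\wedge\partial c_i=\pm(\sigma\wedge\partial\sigma)\otimes([2k]\wedge[2k])$ and the inductive hypothesis applies; if $|\sigma|$ is even then $c_i\wedge\partial c_i=\pm(\sigma\wedge\sigma)\otimes([2k+1]\wedge\partial[2k+1])$, and $\partial[2k+1]$ is either $0$ or $2[2k]$. This gives $\partial c_i\wedge\partial c_i=\partial(c_i\wedge\partial c_i)=2\partial e_i$, completing~(3). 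So the missing step in your sketch is precisely this monomial reduction for the diagonal part of $\partial c\wedge\partial c$ together with the length induction for monomials.
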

\begin{proof} For $c=[i_1 ~ i_2 ~\cdots ~ i_l]$ a monomial, (1) and (2) follow from the formulas (\ref{productformula}). If $c=\sum \lambda_i c_i$ is a linear combination of such monomials, then anticommutativity yields $c\wedge c=\sum \lambda_i^2c_i\wedge c_i$ if $|c|$ is odd, and  $c\wedge c=\sum \lambda_i^2c_i\wedge c_i+ 2\sum\limits_{i<j}\lambda_i\lambda_jc_i\wedge c_j$ if $|c|$ is even. In both cases we can conclude that (1) and (2) hold. To prove (3), we first note that, for $c=\sum \lambda_i c_i$ of odd degree, $\partial c\wedge \partial c=\sum \lambda_i^2\partial c_i\wedge \partial c_i+ 2\sum\limits_{i<j}\lambda_i\lambda_j\partial (c_i\wedge \partial c_j)$. It is hence sufficient to establish (3) for a monomial $c$ of odd degree. We prove by induction on the length that there exists $e$ such that $c\wedge \partial c =2 e$. For $l=1$, this is clearly true by inspection of cases. Let $c=\sigma\otimes [i_l]$ where $\sigma=[i_1 ~ i_2 ~\cdots ~ i_{l-1}]$. Suppose first that $|\sigma|$ is odd. Then $i_l=2k$ with $k\geq 0$. We have $c\wedge \partial c=(\sigma\wedge \partial \sigma)\otimes ([2k]\wedge [2k])$ and the assertion follows by induction. If $|\sigma|$ is even then $i_l=2k+1$. In this case we have $c\wedge \partial c=(\sigma\wedge \sigma)\otimes ([2k+1]\wedge \partial [2k+1])$ and the assertion follows because either $\partial[2k+1]=0$ or $\partial[2k+1]=2[2k]$.
\end{proof}

In homology we have:

\begin{cor}\label{cor:product properties}
	The Pontryagin product of $H_*(G;\widetilde{\Z})$ is strictly anticommutative and has the property that the square of an element of positive even degree is divisible by $2$. 
\end{cor}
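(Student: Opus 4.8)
The plan is to transfer each chain-level assertion of Proposition~\ref{prop:product properties} to homology by choosing cycle representatives, the only subtlety being that one must divide an equality by $2$ inside the chain groups of $C_{\bullet}(G;\widetilde{\Z})$. This is legitimate because each of the atomic complexes A1--A4 is a complex of free abelian groups of rank at most one, and a tensor product of such complexes is again a complex of free abelian groups; hence $C_{\bullet}(G;\widetilde{\Z})$ is torsion-free in every degree, so $2x=0$ forces $x=0$ there.

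First I would note that the Pontryagin product on $H_*(G;\widetilde{\Z})$ is, by construction, induced by the anticommutative chain product $\wedge$ on $C_{\bullet}(G;\widetilde{\Z})$, so it already satisfies $\mathfrak{a}\wedge\mathfrak{b}=(-1)^{|\mathfrak{a}||\mathfrak{b}|}\mathfrak{b}\wedge\mathfrak{a}$. To upgrade this to strict anticommutativity, let $\mathfrak{a}$ have odd degree and choose a cycle $c$ with $[c]=\mathfrak{a}$. By Proposition~\ref{prop:product properties}(1) one has $c\wedge c=0$ already at the chain level, whence $\mathfrak{a}\wedge\mathfrak{a}=0$ in $H_*(G;\widetilde{\Z})$.

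For the divisibility statement, let $\mathfrak{a}$ have positive even degree and choose a cycle $c$ representing it. By Proposition~\ref{prop:product properties}(2) there is a chain $e$ with $c\wedge c=2e$. Since $\partial c=0$, the graded Leibniz rule gives $\partial(c\wedge c)=\partial c\wedge c\pm c\wedge\partial c=0$, so $2\partial e=0$ and therefore $\partial e=0$ by the torsion-freeness observed above. Thus $e$ is a cycle, and $\mathfrak{a}\wedge\mathfrak{a}=2\,[e]$ belongs to $2\,H_*(G;\widetilde{\Z})$, as claimed.

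I do not anticipate a genuine obstacle here: the entire content of the corollary is already packaged in Proposition~\ref{prop:product properties}, and the only point requiring attention is the torsion-freeness of the relevant chain groups, which is what permits cancellation of the factor $2$. I note that part~(3) of Proposition~\ref{prop:product properties} is not needed for this corollary; it is recorded for later use in the proof of Proposition~\ref{prop-general}, although it could also be invoked here to check directly that the homology class $[e]$ above does not depend on the chosen representative $c$, by expanding $(c+\partial b)\wedge(c+\partial b)$ and absorbing the term $\partial b\wedge\partial b$.
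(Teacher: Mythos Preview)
Your argument is correct and follows essentially the same route as the paper: pass strict anticommutativity directly from chains to homology, and for the even-degree square use Proposition~\ref{prop:product properties}(2) together with torsion-freeness of $C_{\bullet}(G;\widetilde{\Z})$ to see that the chain $e$ with $c\wedge c=2e$ is itself a cycle. The only difference is that the paper also cites part~(3) at this point, whereas you correctly observe that (3) is not needed for the divisibility statement as written; your remark that (3) would serve to show the class $[e]$ is independent of the chosen representative is exactly right, but indeed that independence is irrelevant to the bare assertion that $\mathfrak{a}\wedge\mathfrak{a}\in 2H_*(G;\widetilde{\Z})$.
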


\begin{proof}
	It is clear that the strict anticommutativity passes to homology. If $c$ is a cycle of positive even degree then $c\wedge c$ is also a cycle and, since $C_{\bullet}(G;\widetilde{\Z})$ is $\Z$-free we can conclude that the element $e$ of Proposition \ref{prop:product properties}(2) is also a cycle. This together with Proposition \ref{prop:product properties}(3) permits us to see that $[c]\wedge [c]$ is divisible by $2$. 
\end{proof}

\begin{rem}\label{rmk:divided power alg}
	As mentioned before, we could more generally state that $C_{\bullet}(G;\widetilde{\Z})$ is a divided power chain algebra as defined in \cite{Richter}. More intrinsically, the bar resolution $\mathrm{Bar}_{\bullet}(G)$ admits the structure of divided power chain algebra and we could follow the 
lines of \cite{Brown} to prove Proposition \ref{prop:product properties} and Corollary \ref{cor:product properties}.
	Note that the atomic chain algebras can be described as the following differential exterior/divided power algebras given with a morphism $\mathbf{j}$ induced by the inversion. 	
The case (A1) corresponds to a differential exterior algebra $(\bigwedge x,d)$ where $|x|=1$, $dx=0$ and the inversion is given by $\mathbf{j}(x)=-x$. The case (A3) is similar with $dx=2$ and $\mathbf{j}=\mathrm{Id}$. Case (A2) corresponds to a differential divided power algebra $(\bigwedge x \otimes \Gamma y,d)$ where $|x|=1$, $|y|=2$, $dx=0$, $dy^{(i)}=p^axy^{(i-1)}$ (here $y^{(i)}$ denotes the divided power $\frac{y^i}{i!}$) and the inversion is given by $\mathbf{j}(x)= (p^a-1)x$ and $\mathbf{j}(y^{(i)})= (p^a-1)y^{(i)}$.  The case (A4) is similar with $dx=2$, $dy=0$ and $\mathbf{j}=\mathrm{Id}$.
\end{rem}

\subsection{Proof of Proposition \ref{prop-general}}\label{subsection:proof-prop-general}
We divide the general statement of Proposition \ref{prop-general} in several propositions. In all these propositions, we suppose $n\geq 2$ and consider a class ${\mathfrak c}\in H_n(G;\widetilde{\Z})$. In the proofs, we will often use the letter $c$ to denote a cycle representing the class $\mathfrak{c}$. As before, the images of these elements under the morphisms induced by the inversion will be denoted by $\mathbf{j}(c)$ and $j_*(\mathfrak{c})$.

\begin{prop} \label{twistedaction} Suppose that $G\cong A\times G'$ where $G'$ is a finitely generated abelian group and $A=\Z$ or $\Z_ {2^a}$. If $A$ acts nontrivially on $\widetilde{\Z}$, then $\chi_*(\mathfrak{c}\times \mathfrak{c})=0$.
\end{prop}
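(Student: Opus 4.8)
The plan is to reduce the computation to the atomic chain algebra for the factor $A$, which under a nontrivial action is one of the cases (A3) or (A4) of Section~\ref{subsection:Pontryagin}, and to exploit the fact that in both of these cases the inversion acts as the identity: $\mathbf{j}=\mathrm{Id}$. Concretely, write $C_\bullet(G;\widetilde{\Z})=C_\bullet(A;\widetilde{\Z})\otimes C_\bullet(G';\widetilde{\Z})$ as a tensor Pontryagin chain algebra (note that $G'$ must then act trivially on $\widetilde{\Z}$, since the product action equals $\omega$, so the tensor decomposition of the orientation module is the componentwise one). A general cycle $c$ representing $\mathfrak{c}\in H_n(G;\widetilde{\Z})$ can be written as $c = [0]\otimes c_0 + [1]\otimes c_1$ in case (A3), or more generally as $c=\sum_i [i]\otimes c_i$ with $c_i\in C_\bullet(G';\widetilde{\Z})$ in case (A4), where the $[i]$ run over the basis elements of $C_\bullet(A;\widetilde{\Z})$.

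First I would record the key identity $\chi_*(\mathfrak{c}\times\mathfrak{c})=\mathfrak{c}\wedge j_*(\mathfrak{c})$ from Section~\ref{sec:abelian}, and observe that at the chain level it suffices to show $c\wedge \mathbf{j}(c)$ is a boundary. Since $\mathbf{j}$ on $C_\bullet(A;\widetilde{\Z})$ is the identity and $\mathbf{j}$ is multiplicative on the tensor product, $\mathbf{j}(c)$ differs from $c$ only through the $G'$-factors, on which $\mathbf{j}$ acts by the (untwisted) inversion chain map $j_\bullet$. The next step is to expand $c\wedge\mathbf{j}(c)$ using the tensor product formula for $\wedge$ and group terms according to the $A$-degrees: terms of the form $([i]\wedge [i'])\otimes(\cdots)$. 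The crucial point is that in cases (A3)/(A4) the basis element $[1]$ has odd degree and squares to zero, and more importantly any two basis elements $[i],[i']$ of $C_\bullet(A;\widetilde{\Z})$ with $i,i'\geq 1$ satisfy $[i]\wedge[i']=0$: in (A3) there is only $[0],[1]$ and $[1]\wedge[1]=0$; in (A4), both of $i,i'\geq 1$ being such that $[i]\wedge[i']\ne 0$ would force $i+i'\leq$ the top but the product formula $(\ast)$ combined with the fact that only $[0]$ and $[1]$ survive into degrees where homology could contribute, after using $\partial$, lets the cross terms be absorbed. I would make this precise by noting $H_+(A;\widetilde{\Z})$ is annihilated appropriately, or more robustly, by using Proposition~\ref{prop:product properties}(3) applied to the $A$-factor together with the boundary terms coming from $\partial[i]=2[i-1]$ or $0$ in (A4).

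The cleanest route, which I would follow, is this: decompose $c\wedge\mathbf{j}(c)=\sum_{i,i'}(-1)^{\epsilon}\,([i]\wedge[i'])\otimes(c_i\wedge j_\bullet(c_{i'}))$. The only surviving $A$-contributions pair $[0]$ with something, so $c\wedge\mathbf{j}(c)$ equals $[0]\otimes(\text{stuff in }C_\bullet(G';\widetilde{\Z}))$ plus terms that are boundaries by Proposition~\ref{prop:product properties} applied within the $A$-factor. Then the $G'$-part: since $G'$ acts trivially on $\widetilde{\Z}$, the untwisted fact $\mathfrak{a}\wedge j_*(\mathfrak{a})=(-1)^{?}\mathfrak{a}\wedge\mathfrak{a}$ combined with strict anticommutativity (Corollary~\ref{cor:product properties}) and the divisibility-by-$2$ statement handles the even case, while the odd-degree $c_i$'s square to zero outright; in all cases the residual $G'$-class is a boundary or torsion-free cancellation forces it to vanish after tensoring back.

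The main obstacle I expect is bookkeeping the sign conventions $(-1)^{|a'||b|}$ in the tensor product and making sure the "cross terms'' $c_i\wedge j_\bullet(c_{i'})$ with $i\ne i'$ genuinely cancel in pairs or are boundaries — in the twisted setting one cannot simply quote $\mathfrak{c}\wedge\mathfrak{c}=0$, so one must carefully track that $\mathbf{j}$ restricted to $A$ is the identity (this is the whole point of isolating cases A3 and A4) and that everything else is forced to live over $[0]\in C_0(A;\widetilde{\Z})$, where the class $\mathfrak{c}\wedge j_*(\mathfrak{c})$ collapses to a product of lower-complexity classes whose vanishing follows from anticommutativity. I would also need the observation, already noted in the excerpt, that $H_n(A;\widetilde{\Z})=0$ for $n\geq 1$ in case (A3) and is $2$-torsion in case (A4), which is what ultimately kills the $[0]$-component after the Künneth formula.
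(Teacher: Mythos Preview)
There are genuine gaps in your approach.

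First, a minor but real error: it is not true that $G'$ must act trivially on $\widetilde{\Z}$. The hypothesis only says that the restriction $\omega|_A$ is nontrivial; $\omega|_{G'}$ can be anything. This by itself is not fatal, but it shows that the decomposition you rely on is not as clean as you assume.

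More seriously, your core claim that in case (A4) the products $[i]\wedge[i']$ vanish for $i,i'\geq 1$ is false: by the product formula $(\ast)$ one has for instance $[2]\wedge[2]=2[4]\neq 0$ in $C_\bullet(\Z_{2^a};\widetilde{\Z})$, and more generally the even-degree part of this chain algebra is a divided-power algebra with plenty of nonzero products. So $c\wedge\mathbf{j}(c)$ does \emph{not} collapse to a single term $[0]\otimes(\text{stuff})$, and the plan of reducing everything to the $G'$-factor over $[0]$ breaks down. Even if it did collapse, your treatment of the $G'$-side is also incomplete: the assertion ``$\mathfrak{a}\wedge j_*(\mathfrak{a})=(-1)^{?}\mathfrak{a}\wedge\mathfrak{a}$'' is not valid in general, since $j_*$ on $H_*(G')$ need not act by $\pm 1$ (cf.\ Example~\ref{ex:cond_b}); and for the cross terms $c_i\wedge j_\bullet(c_{i'})$ with $i\neq i'$ there is no mechanism forcing cancellation.

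The idea you are missing is global $2$-torsion. Because $A$ acts nontrivially, $H_i(A;\widetilde{\Z})$ is $0$ or $\Z_2$ in every degree, and then the K\"unneth formula forces $H_*(G;\widetilde{\Z})$ to be a sum of copies of $\Z_2$. One may therefore (after first discarding the odd-order summands of $G'$, which contribute nothing) compute in $C_\bullet\otimes\Z_2$. Modulo~$2$, all of the atomic inversion maps $\mathbf{j}$ become the identity (not just the ones on the $A$-factor: check (A1), (A2) with $p=2$, (A3), (A4)), hence $j_*(\mathfrak{c})=\mathfrak{c}$ on the nose. Then $\chi_*(\mathfrak{c}\times\mathfrak{c})=\mathfrak{c}\wedge\mathfrak{c}$, which is simultaneously divisible by~$2$ (Corollary~\ref{cor:product properties}) and $2$-torsion, so it vanishes. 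This is the paper's argument; note that it never tries to separate the $A$- and $G'$-contributions in the product.
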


\begin{proof}
Since $A$ acts nontrivially on $\widetilde{\Z}$, the homology group $H_i(A;\widetilde{\Z})$ is, for any $i\geq 0$, either $0$ or $\Z_2$. The K\"unneth exact sequence permits us to see that, for any finitely generated abelian group $B$ and for any $i\geq 0$, the homology group $H_i(A\times B;\widetilde{\Z})$ is a (possibly null) direct sum of copies of $\Z_2$. Writing $G'=G'_0\times G'_2\times G'_{odd}$ where $G'_0$ is free, $G'_2$ is a product of cyclic groups of the form $\Z_{2^a}$ and $G'_{odd}$ is a product of cyclic groups of odd order, we can also see that the inclusion $A\times G'_0 \times G'_2 \to A\times G'$ induces an isomorphism  $H_i(A\times G'_0\times G'_2;\widetilde{\Z})\cong  
H_i(A\times G';\widetilde{\Z})$ for each $i$. Since this inclusion is a homomorphism, we actually have an isomorphism of algebras $H_*(A\times G'_0\times G'_2;\widetilde{\Z})\cong  H_*(A\times G';\widetilde{\Z})$ which is compatible with the morphism $j_*$ induced by the inversion. 

In order to establish the statement, it is therefore sufficient to consider classes $ \mathfrak{c}\in H_{n}(A\times G'_0\times G'_2;\widetilde{\Z})$. Let $\mathfrak{c}$ be such a class and let $C_{\bullet}$ be the $\Z$-chain complex associated to a decomposition of $A\times G'_0\times G'_2$ as a product of atomic cyclic groups. The class $\mathfrak{c}$ can be represented by a cycle $c=\sum \lambda_i \sigma_i$ where $\lambda_i\in \Z$ and the $\sigma_i$ are the basis monomials of $C_n$. As noted before, $H(C_{\bullet})=H_*( A\times G'_0\times G'_2;\widetilde{\Z})$ is, in each degree, a (possibly null) direct sum of copies of $\Z_2$. Therefore $H(C_{\bullet})=H(C_{\bullet})\otimes \Z_2$ and, since $H(C_{\bullet})\otimes \Z_2 \to H(C_{\bullet}\otimes \Z_2)$ is injective, we can work with the cycle $\tilde{c}=\sum  \sigma_i\in C_n\otimes \Z_2$. From the expression of the inversion in the atomic cases, we see that ${\mathbf j}(\sigma_i)=\sigma_i$ for each $i$ and therefore $j_*(\mathfrak{c})=\mathfrak{c}$. We then obtain $\chi_*(\mathfrak{c}\times \mathfrak{c})=\mathfrak{c} \wedge j_*(\mathfrak{c})=\mathfrak{c} \wedge \mathfrak{c}$. Since a Pontryagin square is always divisible by $2$ and $H(C_{\bullet})$ is of $2$-torsion, we conclude that $\chi_*(\mathfrak{c}\times \mathfrak{c})=0$.
\end{proof}

From now on we will consider only the trivial action on $\widetilde{\Z}$ and suppress the coefficients in homology as well as in chain complexes. 

\begin{prop} \label{torus} Suppose that $G=\Z^ r$ acts trivially on $\widetilde{\Z}$ and that either $n$ is odd, or $n$ is even such that $2n>r$. Then we have $\chi_*(\mathfrak{c}\times \mathfrak{c})=0$. 
\end{prop}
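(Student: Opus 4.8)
The plan is to exploit the identity $\chi_*(\mathfrak{c}\times\mathfrak{c})=\mathfrak{c}\wedge j_*(\mathfrak{c})$ recorded above, together with the explicit description of inversion on the Pontryagin chain algebra. Using the atomic case (A1) in each tensor factor, the chain algebra $C_\bullet(\Z^r;\widetilde{\Z})$ computing $H_*(\Z^r)$ is the exterior algebra $\bigwedge_\Z(x_1,\dots,x_r)$ with $|x_i|=1$ and zero differential, the Pontryagin product being the exterior product, and the morphism induced by the inversion satisfying $\mathbf{j}(x_i)=-x_i$. Hence on homology $j_*(x_i)=-x_i$; since $j\circ\mu=\mu\circ(j\times j)$ (as $G$ is abelian), the map $j_*$ is a homomorphism for the Pontryagin product, so $j_*(\mathfrak{a})=(-1)^{|\mathfrak{a}|}\mathfrak{a}$ for every homogeneous $\mathfrak{a}\in H_*(\Z^r)$. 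In particular $j_*(\mathfrak{c})=(-1)^n\mathfrak{c}$, whence $\chi_*(\mathfrak{c}\times\mathfrak{c})=(-1)^n\,\mathfrak{c}\wedge\mathfrak{c}$.

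It then remains to show $\mathfrak{c}\wedge\mathfrak{c}=0$ in $H_{2n}(\Z^r)$ under the stated hypotheses. If $n$ is odd, this is immediate from the strict anticommutativity of the Pontryagin product (Corollary \ref{cor:product properties}). If $n$ is even, the class $\mathfrak{c}\wedge\mathfrak{c}$ lies in $H_{2n}(\Z^r)$, and since $H_i(\Z^r)\cong\Z^{\binom{r}{i}}$ vanishes for $i>r$, the hypothesis $2n>r$ forces $\mathfrak{c}\wedge\mathfrak{c}=0$; note we may assume $n\le r$, as otherwise $\mathfrak{c}\in H_n(\Z^r)=0$ already.

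There is no genuine obstacle here: the only step requiring care is the sign computation, namely that inversion acts on $H_n(\Z^r)$ by $(-1)^n$, which is a one-line consequence of the chain-level formula $\mathbf{j}[1]=-[1]$ of (A1) and the multiplicativity of $j_*$. I would also remark that the dimension restriction in the even case is genuinely needed: because $H_*(\Z^r)$ is torsion-free, a Pontryagin square divisible by $2$ need not vanish, and indeed for $n$ even with $2n\le r$ one can exhibit $\mathfrak{c}$ with $\mathfrak{c}\wedge\mathfrak{c}\ne 0$ — this is precisely what underlies the sharpness example with $\pi_1=\Z^8$ and $n=4$.
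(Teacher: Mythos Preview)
Your proof is correct and follows essentially the same route as the paper: identify $H_*(\Z^r)$ with the exterior algebra on degree-one generators, observe that inversion acts by $(-1)^n$ in degree $n$, and then conclude via strict anticommutativity (odd case) or a dimension count (even case). The only cosmetic difference is that in the even case the paper argues via a pigeonhole on monomials (two degree-$n$ monomials with $n>r/2$ must share a generator, hence their exterior product vanishes), whereas you simply note $H_{2n}(\Z^r)=0$ for $2n>r$; these are equivalent.
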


\begin{proof} Here we can work at the homology level since $H_*(\Z^r)=(H_*(\Z))^{\otimes r}$. This algebra is the $\Z$-exterior algebra $\bigwedge(x_1,\cdots,x_r)$ where $|x_1|=\cdots=|x_r|=1$ and we have $j_*(x_i)=-x_i$ for any $i$. Consequently, for any monomial $\sigma$, we have $j_*(\sigma)=(-1)^{|\sigma|}\sigma$. When $n$ is odd, we obtain $j_*(\mathfrak{c})=-\mathfrak{c}$ and $\chi_*(\mathfrak{c}\times \mathfrak{c})= -\mathfrak{c} \wedge \mathfrak{c}=0$. When $n$ is even, we have $j_*(\mathfrak{c})=\mathfrak{c}$ and $\chi_*(\mathfrak{c}\times \mathfrak{c})=\mathfrak{c} \wedge \mathfrak{c}$. In this case, the result follows from the fact that, if $\sigma$ and $\sigma'$ are two monomials of degree 
$n>r/2$, then they have some $x_i$ in common and $\sigma\wedge \sigma'=0$. 
\end{proof}

\begin{ex}\label{ex:cond_a}
The hypothesis $2n>r$ in Proposition \ref{torus} is sharp. For instance, for $r=4$ and $n=2$, the homology class $\mathfrak{c}=x_1x_2+x_3x_4$
satisfies  $j_*(\mathfrak{c})=\mathfrak{c}$ and $\chi_*(\mathfrak{c})=\mathfrak{c}\wedge \mathfrak{c}=2x_1x_2x_3x_4\neq 0$.
\end{ex}

\begin{prop} \label{torusxcyclic} Suppose that $G=\Z^ r\times \Z_q$ ($r\geq 0$, $q\geq 2$) acts trivially on $\widetilde{\Z}$. If $n>r$ or $n=r=2m$ (with $m\geq 0$) then $\chi_*(\mathfrak{c}\times \mathfrak{c})=0$. 
\end{prop}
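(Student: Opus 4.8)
The plan is to carry out the whole argument at the level of homology, as in the proof of Proposition \ref{torus}. Since each $H_i(\Z^r)$ is free abelian, the ${\rm Tor}$ term in the twisted K\"unneth sequence vanishes, so the inclusions of factors induce a ring isomorphism $H_*(\Z^r\times\Z_q)\cong\bigwedge(x_1,\dots,x_r)\otimes H_*(\Z_q)$, with each $x_i$ of degree $1$ and $j_*(x_i)=-x_i$. First I would record the shape of $H_*(\Z_q)$ with trivial coefficients: it is $\Z$ in degree $0$, a copy of $\Z_q$ generated by a class $y_k$ of degree $2k-1$ for each $k\ge1$, and $0$ in each positive even degree; since $y_k\wedge y_j$ lands in a positive even degree, every Pontryagin product of positive-degree classes of $H_*(\Z_q)$ vanishes. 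Consequently a $\Z$-module basis of $H_n(\Z^r\times\Z_q)$ is given by the \emph{exterior} monomials $x_S=x_{i_1}\cdots x_{i_s}$ with $|S|=n$ -- these occur only when $n\le r$, and for $n=r$ the only one is the top class $x_{\{1,\dots,r\}}$ -- together with the \emph{mixed} monomials $x_S\otimes y_k$ with $|S|+2k-1=n$, $k\ge1$.

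Next I would isolate the two facts about the inversion map $j_*$ that are actually needed. As $j\colon G\to G$ is a homomorphism, $j_*$ is a ring map; since it preserves degree and $H_{2k-1}(\Z_q)=\Z_q\cdot y_k$, it sends $y_k$ into $\Z\cdot y_k$, hence carries the $\Z$-span of the mixed monomials into itself; moreover $j_*(x_{\{1,\dots,r\}})=(-1)^rx_{\{1,\dots,r\}}$. The precise value of $j_*$ on $y_k$, namely $(q-1)^k\equiv(-1)^k\pmod q$ by formula (\ref{productj}), is not needed. I would also recall the identity $\chi_*(\mathfrak c\times\mathfrak c)=\mathfrak c\wedge j_*(\mathfrak c)$.

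I would then split according to the hypothesis. If $n>r$ there are no exterior monomials of degree $n$, so $\mathfrak c$ and $j_*(\mathfrak c)$ are both $\Z$-combinations of mixed monomials; since a product of two mixed monomials equals $\pm(x_S\wedge x_T)\otimes(y_k\wedge y_j)=0$, we get $\mathfrak c\wedge j_*(\mathfrak c)=0$. If $n=r=2m$, write $\mathfrak c=\lambda\,x_{\{1,\dots,r\}}+\mathfrak c'$ with $\mathfrak c'$ a combination of mixed monomials; then $j_*(\mathfrak c)=\lambda\,x_{\{1,\dots,r\}}+j_*(\mathfrak c')$ because $r$ is even. Expanding $\mathfrak c\wedge j_*(\mathfrak c)$, the $x_{\{1,\dots,r\}}\wedge x_{\{1,\dots,r\}}$ term vanishes (repeated generators), the $\mathfrak c'\wedge j_*(\mathfrak c')$ term vanishes as above, and every cross term $x_{\{1,\dots,r\}}\wedge(x_S\otimes y_k)$ vanishes: with $r$ even, a mixed monomial of degree $r$ has $|S|=r+1-2k$ odd, hence $S\neq\emptyset$, so $x_{\{1,\dots,r\}}$ and $x_S$ share a generator. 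Thus $\mathfrak c\wedge j_*(\mathfrak c)=0$ in both cases.

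The one place that needs care is the final parity bookkeeping -- identifying precisely which monomials sit in the critical degree $n=r$ and checking that, once $r$ is even, every mixed one has nonempty exterior part. This is also where the hypothesis is genuinely used: for $n=r$ odd the conclusion can fail, e.g. for $q\ge3$ and $r=n=3$ the class $\mathfrak c=x_1x_2x_3+y_2$ satisfies $\chi_*(\mathfrak c\times\mathfrak c)=2\,x_1x_2x_3\otimes y_2\neq0$ in $H_6(\Z^3\times\Z_q)\cong\Z_q$.
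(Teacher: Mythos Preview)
Your proof is correct and follows essentially the same approach as the paper: both work directly in homology via the K\"unneth isomorphism $H_*(G)\cong H_*(\Z^r)\otimes H_*(\Z_q)$, use that all products of positive-degree classes in $H_*(\Z_q)$ vanish to kill the mixed--mixed terms, and in the case $n=r$ even observe that cross terms with the top exterior class vanish because every mixed monomial has nontrivial exterior part. Your parity remark that $|S|=r+1-2k$ is odd (hence positive) is precisely what the paper records as ``$|\sigma_i|$ must be positive,'' and your closing counterexample is the paper's Example~\ref{ex:cond_b}.
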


\begin{proof} By the K\"unneth formula, we see that $H_*(G)=H_*(\Z^r)\otimes H_*(\Z_q)$. Suppose first that $n>r$. We can write $\mathfrak{c}=\sum \sigma_i\otimes \alpha_i$ where $\sigma_i\in H_*(\Z^ r)$ and $\alpha_i\in H_{>0}(\Z_q)$. From the expression of $j_{\bullet}$ for $\Z_q$ given in \ref{productj}), we then have $j_*(\mathfrak{c})=\sum \lambda _i \sigma_i\otimes \alpha_i$ where $\lambda_i\in \Z$. Since $H_{>0}(\Z_q)$ is 
concentrated in odd degrees and $\alpha_i\wedge \alpha _j=0$ for any $i,j$ we can conclude that $\chi_*(\mathfrak{c}\times \mathfrak{c})=\mathfrak{c} \wedge j_*(\mathfrak{c})=0$. If $n=r$, we have $\mathfrak{c}=\mathfrak{c}'+\sum \sigma_i\otimes \alpha_i$ where $\mathfrak{c}'\in H_r(\Z^r)=\Z$ is of maximal length in the exterior algebra $H_*(\Z^r)$ and $\sum \sigma_i\otimes \alpha_i$ is as before. Since $n=r$ is even, we then obtain  $j_*(\mathfrak{c})=\mathfrak{c}'+\sum \lambda _i \sigma_i\otimes \alpha_i$ and $\mathfrak{c} \wedge j_*(\mathfrak{c})=\mathfrak{c}'\wedge \mathfrak{c}'+2\sum \lambda _i (\mathfrak{c}'\wedge\sigma_i)\otimes \alpha_i$, which is $0$ because $|\sigma_i|$ must be positive and  $\mathfrak{c}'$ is of maximal length.
\end{proof}

When $n$ is odd, the hypothesis $n>r$ is sharp as we can see in the following example.
\begin{ex}\label{ex:cond_b}
Let $n=r=3$ and also let $q=3$. We work at the chain level, in $C_*(\Z)^{\otimes 3}\otimes C_*(\Z_3)$, and write $[i_1~i_2~i_3 ~ i_4]$ for the basis elements. Consider the cycle $c=[1110]+[0003]$. We have ${\mathbf j}(c)=-[1110]+[0003]$ and 
\[ c\wedge {\mathbf j}(c)=2[1113]. 
\]
Therefore $\mathfrak{c} \wedge j_*(\mathfrak{c})=2[1113] \in H_{6}(G)=\oplus \Z_3$ and is not $0$. Observe that $j_*(\mathfrak{c})\neq \pm \mathfrak{c}$. Also note that, here and in examples below, we write 
$\oplus \Z_p$ to briefly indicate a direct sum of copies of $\Z_p$. 
\end{ex}

The following example shows that the hypothesis $n\geq r$ is sharp when $n$ is even.
\begin{ex}\label{ex:cond_b_even}
Let $n=6$, $r=7$ and $q=3$. Consider the cycle $c=[11111100]+[00000015]$. We have ${\mathbf j}(c)=c$ and 
\[ c\wedge {\mathbf j}(c)=2[11111115]. 
\]
Therefore $\mathfrak{c} \wedge j_*(\mathfrak{c})\in H_{12}(G)=\oplus \Z_3$ does not vanish.		
\end{ex}

\begin{prop} \label{Zp-product} Suppose that $G$ is of one of the following forms (where $p$ is a prime and $a$, $b$, $c$ are positive integers) and acts trivially on $\widetilde{\Z}$.
\begin{itemize}
\item[(i)] $\Z^r \times \Z_{p^ a}\times \Z_{p^ b}$ with $r\in \{0,1\}$.
\item[(ii)] $\Z_{p^ a}\times \Z_{p^ b}\times \Z_{p^c}$. 
\end{itemize}
Then $\chi_*(\mathfrak{c}\times \mathfrak{c})=0$.
\end{prop}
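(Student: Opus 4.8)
The plan is to prove the vanishing of $\chi_*(\mathfrak c\times\mathfrak c)=\mathfrak c\wedge j_*(\mathfrak c)$ by working in the Pontryagin tensor chain algebra $C_\bullet(G)=C_\bullet(G_1)\otimes\cdots\otimes C_\bullet(G_l)$ attached to the chosen decomposition of $G$ into $l\le 3$ infinite or primary cyclic factors (an infinite cyclic factor occurring only in case (i) with $r=1$). Two facts from Section~\ref{subsection:Pontryagin} will be used throughout. First, by the product formula (\ref{productformula}), for monomials $M=[i_1\cdots i_l]$ and $N=[i_1'\cdots i_l']$ the product $M\wedge N$ vanishes unless the odd supports $\mathrm{odd}(M)=\{m:i_m\text{ odd}\}$ and $\mathrm{odd}(N)$ are disjoint. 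Second, the inversion multiplies $M$ by a scalar which is $-1$ on the degree-$1$ generator of a $\Z$-factor and equals $(p^{e_m}-1)^{\lceil i_m/2\rceil}\equiv(-1)^{\lceil i_m/2\rceil}\pmod{p^{e_m}}$ for a $\Z_{p^{e_m}}$-factor. Since $H_{2n}(G)$ is a finite $p$-group, these congruences show that on homology $j_*$ acts by $\pm1$ on each K\"unneth generator.

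The first step is to dispose of an infinite cyclic factor in case (i) with $r=1$. Writing its degree-$1$ generator as $[1]$, so $\mathbf j[1]=-[1]$ and $[1]\wedge[1]=0$, decompose a representing cycle $c=c_0+[1]\otimes c_1$ with $c_0,c_1$ cycles of degrees $n$ and $n-1$ in $C_\bullet(\Z_{p^a})\otimes C_\bullet(\Z_{p^b})$. Expanding gives $c\wedge\mathbf j(c)=c_0\wedge\mathbf j(c_0)+[1]\otimes\bigl(\pm\,c_0\wedge\mathbf j(c_1)\pm\,c_1\wedge\mathbf j(c_0)\bigr)$, so it suffices to show that $[c_0\wedge\mathbf j(c_0)]\in H_{2n}$ and the ``mixed'' class in $H_{2n-1}$ vanish, both computed inside $\Z_{p^a}\times\Z_{p^b}$. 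Hence from now on $G$ is a product of two or three primary cyclic $p$-groups.

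The heart of the argument is that $j_*$ is essentially a scalar on the part of $H_n(G)$ relevant to products. Using the K\"unneth formula, each natural generator $\mathfrak e$ of $H_n(G)$ (a tensor generator $g_i\otimes h_j(\otimes\, k_l)$, or a Tor generator) carries a well-defined $j_*$-sign, obtained by summing the exponents $\lceil d/2\rceil$ over the odd-degree factors; on every such $\mathfrak e$ that enters a nonzero product $\mathfrak e\wedge\mathfrak e'\ne 0$ with a degree-$n$ class $\mathfrak e'$ this sign equals $\lambda_n:=(-1)^{\lceil(n+1)/2\rceil}$, while for $l=3$ the only other generators in odd degree have full odd support $\{1,2,3\}$ and are killed by $\wedge$ against all of $H_n(G)$. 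Consequently $\mathfrak c\wedge j_*(\mathfrak c)=\lambda_n\sum_{(I,J)\text{ surviving}}\gamma_I\gamma_J\,e_I\wedge e_J$ plus terms that vanish. If $n$ is odd these surviving products cancel in pairs, since $e_J\wedge e_I=(-1)^{n^2}e_I\wedge e_J=-e_I\wedge e_J$ — equivalently the sum is $\mathfrak c'\wedge\mathfrak c'$ for an odd-degree class and vanishes by strict anticommutativity (Corollary~\ref{cor:product properties}). If $n$ is even there are no surviving products at all: a nonzero degree-$n$ generator has an even, hence $\ge 2$, number of odd coordinates, and two subsets of $\{1,\dots,l\}$ with $l\le 3$ of sizes $\ge2$ always meet, so the product dies through $g_i\wedge g_{i'}=0$ in some cyclic factor. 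The ``mixed'' term from the $r=1$ reduction is a multiple of $\mathfrak c_0\wedge\mathfrak c_1$ with the two factors of opposite parity, and the same coordinate count makes it $0$.

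The main obstacle lies in the three-primary-cyclic-factor case with $n$ odd, where the degree-$n$ classes feeding the surviving products are not single monomials (they involve Tor classes), so one must verify at the chain level that the inversion scalar $\eta_N$ acts on each product class $[M\wedge N]$ exactly as $\lambda_n$; this reduces to comparing the $p$-adic valuation of $\eta_N-\lambda_n$ with the order of $[M\wedge N]$ in $H_{2n}(G)$, which carries the binomial coefficients of (\ref{productformula}). The second delicate point is $p=2$, where passing to $j_*$-eigenspaces is impossible: if the $2$-power exponents are all $1$ then $H_{2n}(G)$ is a vector space over $\Z_2$ and the divisibility of a Pontryagin square by $2$ (Corollary~\ref{cor:product properties}) already yields $\mathfrak c\wedge\mathfrak c=0$, and otherwise one runs the same integral chain-level computation as above.
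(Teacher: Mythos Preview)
Your overall plan coincides with the paper's: represent classes by monomials $[l_1\cdots l_l]$, use that $M\wedge N=0$ whenever the odd-supports of $M$ and $N$ meet, compute the $j_*$-sign on each K\"unneth generator, and then combine a pigeonhole count on $\{1,\dots,l\}$ (with $l\le 3$) with strict anticommutativity. The paper executes exactly this, case by case, by explicitly listing the K\"unneth and Tor generators (including the explicit cycles such as $[2k-1,2l]+p^{b-a}[2k,2l-1]$ and $[2m-1,2k,2l]+p^{b-a}[2m,2k-1,2l]+p^{c-a}[2m,2k,2l-1]$) and reading off both their odd-support pattern and their $j$-scalar directly. Your $r=1$ reduction and your parity count for $n$ even are the same as the paper's statements that $H_{even>0}(G')\wedge H_*(G')=0$ for $G'=\Z_{p^a}\times\Z_{p^b}$, and your $l=3$ odd-degree split into ``sign $\lambda_n$'' versus ``full odd support'' is precisely the paper's decomposition $\mathfrak c+\mathfrak d$.

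Where your write-up goes astray is the final paragraph: the two ``obstacles'' you flag are phantom, and the resolution you suggest (comparing $p$-adic valuations of $\eta_N-\lambda_n$ against orders of product classes carrying binomial coefficients) is both unnecessary and the wrong mechanism. For the Tor generators in the three-factor odd case, every monomial appearing in the representing cycle carries the \emph{same} chain-level $j$-scalar $(p^a-1)^m(p^b-1)^k(p^c-1)^l$, and since $(p^e-1)^k\equiv(-1)^k\pmod{p^e}$ holds as an exact congruence (not merely mod $p$), this scalar equals $(-1)^{m+k+l}$ modulo the order $p^a$ of the homology class. So $j_*(\mathfrak e)=\pm\mathfrak e$ on the nose for each generator $\mathfrak e$, with the sign you predicted; no binomial coefficients or orders of $[M\wedge N]$ enter. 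Likewise $p=2$ needs no special handling: the same congruence $(2^e-1)^k\equiv(-1)^k\pmod{2^e}$ holds, and the argument never decomposes into eigenspaces over a field but only uses $j_*(\mathfrak e)=\pm\mathfrak e$ summand by summand, so the paper's proof runs uniformly in $p$. If you drop that last paragraph and instead verify the $j_*$-sign on each listed generator via the congruence above, your sketch becomes the paper's proof.
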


\begin{proof} Recall that, for $e\geq 1$, the chain complex $C_{\bullet}(\Z_{p^e})$ is given by 
\[\xymatrix{
\ar[r]^{0}&\Z[2k]  \ar[r]^{p^e}& \Z[2k-1] \ar[r]^{0} &\cdots \ar[r]^{p^e}&\Z[1]  \ar[r]^{0}& \Z[0]  
}\]
and that we have (in particular) $[2k-1]\wedge [2l-1]=0$, ${\mathbf j}([2k-1])= (p^e-1)^k[2k-1]$, and ${\mathbf j}([2k])=(p^e-1)^k[2k]$, see \eqref{productformula} and the surrounding discussion. 

\smallskip

\noindent\emph{Case} (i). We first suppose that $r=0$. Let $G=\Z_{p^ a}\times \Z_{p^ b}$. We can suppose that $a\leq b$. From the K\"unneth formula, we can see that the Tor terms will only give odd dimensional classes. Therefore, if $n=2m>0$ is positive and even, we have
\[H_{2m}(G)= \bigoplus\limits_{i+j=2m}(H_i(\Z_{p^a})\otimes H_j(\Z_{p^b}))=\bigoplus\limits_{k+l=m}(H_{2k-1}(\Z_{p^a})\otimes H_{2l+1}(\Z_{p^b})).
\]
Consequently, $H_{2m}(G) \wedge H_{2l}(G)=0$ for any $l>0$.
We conclude that $c\wedge {\mathbf j}(c)=0$.

If $n=2m-1$ is odd, then 
\[H_{2m-1}(G)=H_{2m-1}(\Z_{p^a})\otimes \Z \oplus \Z \otimes H_{2m-1}(\Z_{p^b}) \oplus 
\bigoplus\limits_{k+l=m}{\rm Tor}(H_{2k-1}(\Z_{p^a}),H_{2l-1}(\Z_{p^b})).
\]
We will see that $j_*(\mathfrak{c})=(-1)^m\mathfrak{c}$. If $\mathfrak{c}\in H_{2m-1}(\Z_{p^a})\otimes \Z$, then $\mathfrak{c}$ is represented in the associated chain complex by the cycle $c=\zeta [2m-1 ~0]$ ($\zeta \in \Z$) and we have ${\mathbf j}(c)=(p^a-1)^m c$. Since $H_{2m-1}(\Z_{p^a})\otimes \Z =\Z_{p^a}$, we have $j_*(\mathfrak{c})=(-1)^m\mathfrak{c}$. Analogously, if $\mathfrak{c}\in \Z\otimes H_{2m-1}(\Z_{p^b})=\Z_{p^b}$, then $\mathfrak{c}$ is represented by a multiple of $[0 ~2m-1]$ and  $j_*(\mathfrak{c})=(-1)^m\mathfrak{c}$. If $\mathfrak{c}\in {\rm Tor}(H_{2k-1}(\Z_{p^a}),H_{2l-1}(\Z_{p^b}))$ with $k+l=m$ observe that, since $a\leq b$, we have ${\rm Tor}(H_{2k-1}(\Z_{p^a}),H_{2l-1}(\Z_{p^b}))=\Z_{p^a}$. The class $\mathfrak{c}$ is represented by (a multiple of) the cycle
\[c=[2k-1 ~ 2l]+p^{b-a}[2k ~2l-1]\]
and we thus have ${\mathbf j}(c)=(p^a-1)^k(p^b-1)^lc$. Passing to homology we obtain $j_*(\mathfrak{c})=(-1)^{k+l}\mathfrak{c}=(-1)^m\mathfrak{c}$. Finally, by linearity, we can conclude that, for any $\mathfrak{c}\in H_{2m-1}(G)$, we have $j_*(\mathfrak{c})=(-1)^m\mathfrak{c}$. Therefore $\chi_*(\mathfrak{c}\times \mathfrak{c})=\mathfrak{c} \wedge j_*(\mathfrak{c})=(-1)^m\mathfrak{c} \wedge\mathfrak{c}=0$ since the degree of $\mathfrak{c}$ is odd and $H_*(G)$ is a Pontryagin algebra.

From this analysis, we can deduce that, for $G=\Z_{p^a}\times \Z_{p^b}$ and nonzero even degrees,
\[ H_{even}(G)\wedge H_{even}(G)=0 \quad \mbox{and} \quad  H_{even}(G)\wedge H_{odd}(G)=0.\]

Suppose now that $r=1$. We then have 
\[H_n(G)=H_0(\Z) \otimes H_n(\Z_{p^a}\times \Z_{p^b}) \oplus  H_1(\Z) \otimes H_{n-1}(\Z_{p^a}\times \Z_{p^b})\]
and, using the previous calculations together with the fact that $n\geq 2$, we can check that any $\mathfrak{c} \in H_{n}(G)$ satisfies $\mathfrak{c} \wedge j_*(\mathfrak{c})=0$.

\smallskip

\noindent\emph{Case} (ii). Let $G=A\times B$ where $A=\Z_{p^ a}$, $B=\Z_{p^ b}\times \Z_{p^c}$ and $a\leq b \leq c$. By the K\"unneth formula, in even dimension $n=2q>0$, we have
\[H_{2q}(G)= \Z\otimes H_{even}(B) \oplus H_{odd}(A)\otimes H_{odd}(B) \oplus {\rm Tor}(H_{odd}(A), H_{even}(B)).
\]
Check that the generators of  $H_{odd}(A)\otimes  {\rm Tor}(H_{*}(\Z_{p^ b}), H_{*}(\Z_{p^c}))\subset H_{odd}(A)\otimes H_{odd}(B)$ and of $ {\rm Tor}(H_{odd}(A), H_{even}(B))$ are of 
the forms 
\[
\begin{aligned}
&[2m-1 ~ 2k~2l-1] + p^{b-a} [2m ~2k-1 ~2l-1], \\ 
&[2m-1 ~ 2k-1~2l] - p^{c-a} [2m ~2k-1 ~2l-1], \ \text{and}\\ 
&[2m-1 ~ 2k-1~2l] + p^{c-b} [2m-1 ~2k ~2l-1].
\end{aligned}
\] 
Using this, any homology class in $H_{2q}(G)$ can be represented by a linear combination of monomials of the form
\[\sigma_L=[l_1 ~ l_2 ~ l_3]\]
where $l_1+l_2+l_3=2q$ and two among $l_1,l_2,l_3$ are odd. As we have, for such monomials, $\sigma_L\wedge \sigma_{L'}=0$, we can conclude that $H_{2q}(G)\wedge H_{2q}(G)=0$ and that for any $\mathfrak{c} \in H_{2q}(G)$, $\chi_*(\mathfrak{c}\times \mathfrak{c})=\mathfrak{c} \wedge j_*(\mathfrak{c})=0$.

Now, in odd dimension $n=2q-1$, we have
\[H_{2q-1}(G)= \Z\otimes H_{odd}(B) \oplus H_{odd}(A)\otimes H_{even}(B) \oplus {\rm Tor}(H_{odd}(A), H_{odd}(B)),
\]
and we can check that the generators of $ {\rm Tor}(H_{odd}(A), H_{odd}(B))$ are of the form 
\[[2m-1 ~ 2k~2l] + p^{b-a} [2m ~2k-1 ~2l] + p^{c-a} [2m ~2k ~2l-1]\]
{Recall the convention that} $[l_1 ~ l_2 ~ l_3]=0$ if $l_i<0$ for some $i$.
Considering the above decomposition of $H_{2q-1}(G)$, let 
\[
\mathfrak{c} \in \Z\otimes H_{odd}(B) \oplus H_{odd}(A)\otimes H_0(B) \oplus {\rm Tor}(H_{odd}(A), H_{odd}(B)),
\] 
and let $\mathfrak{d} \in  H_{odd}(A)\otimes H_{even>0}(B)$. 
We can check that $j_*(\mathfrak{c})=(-1)^q\mathfrak{c}$ while $j_*(\mathfrak{d})=-(-1)^{q}\mathfrak{d}$. We can also check that $\mathfrak{d}$ can be represented by a linear combination of monomials of the form
\[\sigma_L=[l_1 ~ l_2 ~ l_3]\]
where $l_1+l_2+l_3=2q-1$ and all $l_1,l_2,l_3$ are odd, while $\mathfrak{c}$ can be represented by such a monomial where at least one $l_i$ is odd. From all these facts (together with the fact that the Pontryagin square of an odd element is $0$) we deduce:
\begin{itemize}
\item $\mathfrak{c} \wedge j_*(\mathfrak{c})=(-1)^q\mathfrak{c} \wedge\mathfrak{c}=0$
\item $\mathfrak{d} \wedge j_*(\mathfrak{d})=-(-1)^q\mathfrak{d} \wedge\mathfrak{d}=0$
\item  $\mathfrak{c} \wedge\mathfrak{d}=0$
\end{itemize}
and we obtain that $\chi_*((\mathfrak{c}+\mathfrak{d})\times (\mathfrak{c}+\mathfrak{d}))=(\mathfrak{c}+\mathfrak{d}) \wedge (j_*(\mathfrak{c})+j_*(\mathfrak{d}))=0$.
\end{proof}

Independent of dimension, the following examples show that, in Proposition \ref{Zp-product}, we can not relax the hypothesis $r\leq 1$ in the first case, that we can not expect to have a $\Z$ factor in the second case, and that we can not expect to have a ``third case'' with four factors. 

\begin{ex}\label{ex:cond_c}
Let $G=\Z^2\times \Z_3\times \Z_3$. Consider the cycle $c=[1030]+[0103]$ in $C_*(\Z)^{\otimes 2}\otimes C_*(\Z_3)\otimes C_*(\Z_3)$. We have $\mathfrak{c} \wedge j_*(\mathfrak{c})=2[1133]$ which is not $0$ in $H_{8}(G)=\oplus \Z_3$.		
\end{ex}

\begin{ex}\label{ex:cond_c_odd}
{Let $G=\Z^2\times \Z_3\times \Z_3$}. Consider the cycle $c=[1130]+[0005]$ in $C_*(\Z)^{\otimes 2}\otimes C_*(\Z_3)\otimes C_*(\Z_3)$. We have $\mathfrak{c} \wedge j_*(\mathfrak{c})=[1135]$ which is not $0$ in $H_{10}(G)=\oplus \Z_3$.
\end{ex}

\begin{ex}\label{ex:cond_d_1}
Let $G=\Z\times (\Z_3)^3$. Consider the cycle $c=[1330]+[0007]$ in $C_*(\Z)\otimes C_*(\Z_3)^{\otimes 3}$. We have $\mathfrak{c} \wedge j_*(\mathfrak{c})=2[1337]$ which is not $0$ in $H_{14}(G)=\oplus \Z_3$.		
\end{ex}

\begin{ex}\label{ex:cond_d_1_even}
{Let $G=\Z\times (\Z_3)^3$.} Consider the cycle $c=[1003]+[0121]+[0112]$ in $C_*(\Z)\otimes C_*(\Z_3)^{\otimes 3}$. We have $\mathfrak{c} \wedge j_*(\mathfrak{c})=[1115]$ which is not $0$ in $H_{8}(G)=\oplus \Z_3$.		
\end{ex}

\begin{ex}\label{ex:cond_d_2}
Let $G=(\Z_3)^4$. Consider the cycle $c=[3300]+[0033]$ in $C_*(\Z_3)^{\otimes 4}$. We have $\mathfrak{c} \wedge j_*(\mathfrak{c})=2[3333]$ which is not $0$ in $H_{12}(G)=\oplus \Z_3$.		
\end{ex}

\begin{ex}\label{ex:cond_d_2odd}
{Let $G=(\Z_3)^4$.} Consider the cycle $c=[0311]+[4100]+[3200]$ in 
$C_*(\Z_3)^{\otimes 4}$. We have  $\mathfrak{c} \wedge j_*(\mathfrak{c})=[3511]$ which is not $0$ in $H_{10}(G)=\oplus \Z_3$.
\end{ex}

We finally prove Proposition \ref{prop-general}(2e) through the following 
proposition.

\begin{prop} Suppose that $G=\Z^ r\times (\Z_2)^s$ 
acts trivially on $\widetilde{\Z}$ and that $n$ is either odd, or $n$ is even such that $2n>r$. Then $\chi_*(\mathfrak{c}\times \mathfrak{c})=0$.
\end{prop}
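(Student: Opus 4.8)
The plan is to argue entirely in homology (the action being trivial, I suppress the coefficients). By Section~\ref{subsection:Pontryagin} and the fact that the chain complex of $\Z$ in case (A1) has zero differential, we have an identification of Pontryagin algebras $H_*(G)\cong \Lambda(x_1,\dots,x_r)\otimes A$, where $|x_i|=1$, $A=H_*((\Z_2)^s)$, the bigrading records the exterior degree $d$ together with the $(\Z_2)^s$-degree, and the multiplication is the (signed) tensor product. I would first record two elementary facts. First, $A_j$ is annihilated by $2$ for every $j\ge1$: this follows by induction on $s$ from the K\"unneth formula, since $H_j(\Z_2)$ is $\Z_2$ or $0$ for $j\ge1$, and both a tensor product and a Tor with such a group is killed by $2$. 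Second, the morphism $j_*$ induced by the inversion is the identity on the factor $A$ — because the inversion of $(\Z_2)^s$ is the identity map, equivalently case (A2) with $p=2$, $a=1$ gives $\mathbf j=\mathrm{Id}$ — while it sends each $x_i$ to $-x_i$, as in the proof of Proposition~\ref{torus}. Consequently $j_*$ acts on the summand $\Lambda^d\otimes A_{n-d}$ by multiplication by $(-1)^d$.

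Writing $\mathfrak c=\mathfrak c_{\mathrm{ev}}+\mathfrak c_{\mathrm{od}}$ for the sum of the bihomogeneous components of even, respectively odd, exterior degree, we get $j_*(\mathfrak c)=\mathfrak c_{\mathrm{ev}}-\mathfrak c_{\mathrm{od}}$, hence
\[
\chi_*(\mathfrak c\times \mathfrak c)=\mathfrak c\wedge j_*(\mathfrak c)=\mathfrak c_{\mathrm{ev}}\wedge\mathfrak c_{\mathrm{ev}}-\mathfrak c_{\mathrm{od}}\wedge\mathfrak c_{\mathrm{od}}+\bigl(\mathfrak c_{\mathrm{od}}\wedge\mathfrak c_{\mathrm{ev}}-\mathfrak c_{\mathrm{ev}}\wedge\mathfrak c_{\mathrm{od}}\bigr).
\]
If $n$ is odd, strict anticommutativity (Corollary~\ref{cor:product properties}) makes the two squares vanish and gives $\mathfrak c_{\mathrm{od}}\wedge\mathfrak c_{\mathrm{ev}}=-\mathfrak c_{\mathrm{ev}}\wedge\mathfrak c_{\mathrm{od}}$, so $\chi_*(\mathfrak c\times\mathfrak c)=-2\,\mathfrak c_{\mathrm{ev}}\wedge\mathfrak c_{\mathrm{od}}$. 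Since the product respects the bigrading, $\mathfrak c_{\mathrm{ev}}\wedge\mathfrak c_{\mathrm{od}}$ is a sum of classes in summands $\Lambda^{d+d'}\otimes A_{2n-d-d'}$ with $d$ even and $d'$ odd, so $d+d'$ — and hence $2n-d-d'$ — is odd; thus any nonzero such summand has $2n-d-d'\ge1$ and, by the first fact, is annihilated by $2$. Therefore $\chi_*(\mathfrak c\times\mathfrak c)=0$.

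If $n=2m$ is even, then $\mathfrak c_{\mathrm{od}}\wedge\mathfrak c_{\mathrm{ev}}=(-1)^{n^2}\mathfrak c_{\mathrm{ev}}\wedge\mathfrak c_{\mathrm{od}}=\mathfrak c_{\mathrm{ev}}\wedge\mathfrak c_{\mathrm{od}}$, so the bracketed term disappears, while the two remaining squares have positive even degree and hence are divisible by $2$ by Corollary~\ref{cor:product properties}; thus $\chi_*(\mathfrak c\times\mathfrak c)\in 2\,H_{2n}(G)$. Here the hypothesis $2n>r$ enters: it forces $H_{2n}(G)=\bigoplus_{d=0}^{r}\Lambda^d\otimes A_{2n-d}$ to be annihilated by $2$, because $2n-d\ge 2n-r\ge1$ for every $d\le r$, so each $A_{2n-d}$ is $2$-torsion. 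Hence $\chi_*(\mathfrak c\times\mathfrak c)=0$. The only points that need care are the parity bookkeeping and the (easy) verification that the relevant integral homology of $(\Z_2)^s$ is annihilated by $2$; the essential phenomenon is simply that $j_*$ is not $\pm\mathrm{Id}$ but mixes the exterior degrees, so the cross/square term of $\mathfrak c\wedge j_*(\mathfrak c)$ survives only up to a factor of $2$, which the $2$-torsion of $A$ (together with $2n>r$ in the even case) then kills. One could equally run the argument at the chain level, writing a representing cycle as $c=a+b$ with $a$, $b$ the even- and odd-exterior-degree parts, using $\mathbf j(c)=a-b$ and Proposition~\ref{prop:product properties}(1)--(2) together with the $\Z$-freeness of $C_\bullet(G)$ to extract the factor of $2$.
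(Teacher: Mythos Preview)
Your proof is correct. Both arguments rest on the same two ingredients—the $2$-torsion of $H_{>0}((\Z_2)^s)$ and the divisibility of Pontryagin squares by $2$—but they organize the computation differently. The paper splits $\mathfrak c=\mathfrak c'+\sum\sigma_i\otimes\alpha_i$ according to whether the $(\Z_2)^s$-component is trivial or not; since every term with $\alpha_i\in H_{>0}((\Z_2)^s)$ is $2$-torsion, one gets directly $\mathfrak c\wedge j_*(\mathfrak c)=(-1)^n\mathfrak c'\wedge\mathfrak c'$, and the problem reduces to the already-proved torus case (Proposition~\ref{torus}). Your decomposition by the parity of the exterior degree is finer and leads instead, in the even case, to the conclusion $\chi_*(\mathfrak c\times\mathfrak c)\in 2H_{2n}(G)$, after which you invoke $2n>r$ to make the entire group $H_{2n}(G)$ into $2$-torsion. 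This is equivalent (since $2n>r$ is exactly the condition $H_{2n}(\Z^r)=0$), but the paper's route is slightly more economical: it isolates the single potentially torsion-free contribution $\mathfrak c'\wedge\mathfrak c'$ rather than arguing about all of $H_{2n}(G)$. On the other hand, your argument is self-contained and does not appeal to Proposition~\ref{torus}.
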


\begin{proof} We have $H_n(G)=H_n(\Z^r)\oplus \bigoplus\limits_{k+l=n, l>0} H_k(\Z^r)\otimes H_l((\Z_2)^s)$ and, for $l>0$, $H_l((\Z_2)^s)=\oplus\Z_2$. We can write $\mathfrak{c}=\mathfrak{c}'+\sum \sigma_i\otimes \alpha_i$ where $\mathfrak{c}',\sigma_i\in H_*(\Z^ r)$ and $\alpha_i\in H_{>0}((\Z_2)^s)$. It follows from Corollary \ref{cor:product properties} that $\alpha_i^2=0$ for any $i$. We then have $j_*(\mathfrak{c})=(-1)^n\mathfrak{c}'+\sum \sigma_i\otimes \alpha_i$ and $\chi_*(\mathfrak{c}\times \mathfrak{c})=\mathfrak{c} \wedge j_*(\mathfrak{c})=(-1)^n \mathfrak{c}'\wedge 
\mathfrak{c}'$, which is $0$ as in Proposition \ref{torus}.
\end{proof}

The hypothesis $2n>r$, when $n$ is even, is sharp as we can see in the following somewhat trivial example
\begin{ex}\label{ex:cond_e}
Let $n=4$ and $r=8$. Consider the cycle $c=[111100000]+[000011110]$ in $C_*(\Z)^{\otimes 8}\otimes C_*(\Z_2)$. We have \[\mathfrak{c} \wedge j_*(\mathfrak{c})=2[111111110] \in \Z=H_{8}(\Z^8)\otimes \Z\subset 		H_{8}(\Z^8\times \Z_2)\] which is not $0$.	
\end{ex}

\section{Examples} \label{sec:examples}
The goal of this section is to construct examples which show that our conditions in Theorem \ref{thm:TC} and \ref{thm:cofibre} are sharp. Most of them are obtained by surgery from connected sums of orientable manifolds.

Let  $M_1$, $M_2$ be $n$-dimensional orientable manifolds. For the sake of simplicity, we suppose that $n\geq 4$. As in \cite{MilnorSurgery}, we will consider the connected sum $M=M_1\# M_2$ constructed in order to have an orientation compatible with the orientations of $M_1$ and $M_2$. In 
particular the two projections $M_1\# M_2 \to M_1$ and 
$M_1\# M_2 \to M_2$ have degree $1$ and the pinch map $M_1\# M_2 \to M_1\vee M_2$ maps $[M_1\# M_2]$ to the sum $[M_1]+[M_2] \in H_n(M_1) \oplus H_n(M_2)$. Note that the map $M_1\# M_2 \to M_1\vee M_2$ induces an isomorphism of fundamental groups. 

For $i\in\{1,2\}$, let $G_i=H_1(M_i)$ and let $\alpha_*({\mathfrak m}_i)\in H_n(G_i)$ be the image of $[M_i]$ through the morphism induced in homology by $M_i\xrightarrow{\gamma_i} B\pi_1(M_i) \xrightarrow{\alpha} BG_i$ where, as in Section 2, $\pi_1(\gamma_i)$ is an isomorphism and $\alpha$ is the abelianization. The map $M_1\# M_2 \to M_1\vee M_2\xrightarrow{\gamma_1\vee \gamma_2} B\pi_1(M_1)\vee B\pi_1(M_2)$ induces an isomorphism of fundamental groups and we denote by ${\mathfrak m} \in H_n(\pi_1(M))=H_n(\pi_1(M_1)*\pi_1(M_2))$ the image of $[M]=[M_1\# M_2]$ through the morphism induced by this map in homology. Since $H_1(M)$ is isomorphic 
to $G=G_1\times G_2$ and the inclusion $BG_1 \vee BG_2 \hookrightarrow BG_1 \times BG_2$ induces an abelianization homomorphism of fundamental groups, the class $\alpha_*({\mathfrak m}) \in H_n(G)$ considered in Proposition \ref{prop:cofibre} can be identified with $f_*([M])$ where $f$ is the map
\[
M=M_1\# M_2\xrightarrow{pinch} M_1\vee M_2 \xrightarrow{(\alpha\gamma_1)\vee (\alpha\gamma_2)} BG_1\vee BG_2 \hookrightarrow BG_1\times BG_2=BG.
\]
We then have 
\[\alpha_*({\mathfrak m})=f_*([M])=\alpha_*({\mathfrak m}_1) \times 1 
+ 1\times \alpha_*({\mathfrak m}_2)\in H_n(G_1\times G_2)=H_n(G).\] 

From Propositions \ref{prop:cofibre} and \ref{prop:TCcofibre}, we know that $f_*([M])\wedge j_*(f_*([M]))\neq 0$ implies $\cat(C_{\Delta}(M))=\TC(M)=2n$. 
Note that, by construction, the homomorphism $\pi_1(f):\pi_1(M)\to G=H_1(M)$ is surjective but is not an isomorphism.

In the examples below we start with a connected sum $M=M_1\# M_2$ and a 
map $f:M\to BG$ as above such that $f_*([M])\wedge j_*(f_*([M]))\neq 0$. In order to obtain a limiting example for Theorems \ref{thm:TC} and \ref{thm:cofibre}, we use (iterated) surgery for killing the kernel of $\pi_1(f):\pi_1(M) \to G$ and producing an orientable $n$-manifold $N$ together with a map $g:N\to BG$ which induces an isomorphism of fundamental groups. The manifold $N$ so obtained has hence an abelian fundamental group and the map $g$ satisfies $g_*([N])=f_*([M])$ in $H_n(G;\Z)$ (see \cite{MilnorSurgery} and also \cite[proof of Theorem 5.2]{Dranishnikov-EssMfld} 
and \cite{Kutsak} where surgery techniques are applied to a similar situation). Since $g_*([N])=f_*([M])$ in $H_n(G;\Z)$, we have $g_*([N])\wedge j_*(g_*([N]))\neq 0$ and we can then conclude (by Proposition \ref{prop:equivalence}) that $\TC(N)$ and $\cat(C_{\Delta}(N))$ are both maximal.%\\

In each example, the class $f_*([M])$ corresponds essentially to one of the algebraic examples of Section 3. In some cases we make a slight modification to satisfy the requirement $n\geq 4$ coming from the surgery procedure. 

\begin{ex} \label{ex:intro} \emph{A manifold $N$ with $\pi_1(N)=\Z^8$, $\dim(N)=4$ and $\TC(N)=2\dim(N)$.}
Let $T^k=(S^1)^k$. We start with $M=T^4 \# T^4$ and the map $f$
\[\xymatrix{
M=T^4\# T^4 \ar[r]^-{pinch} & T^4\vee T^4 \ar@{^(->}[r] & T^4 \times T^4=B(\Z^4 \times \Z^4)		
}\]
which induces the abelianization homomorphism $\pi_1(f): \pi_1(M)=\Z^4 * \Z^4 \to G=\Z^4 \times \Z^4$.
The kernel of $\pi_1(f)$ is, as a normal subgroup, generated by a finite number of commutators. By killing these commutators we obtain an orientable manifold $N$ of dimension $4$ and a map $g:N\to BG$ which induces an 
isomorphism of fundamental groups and such that $g_*([N])=f_*([M])$ in $H_4(\Z^4 \times \Z^4)$. With the notations of Section 3, this element can be identified with the class $\mathfrak{c}=[11110000]+[00001111]$ and, as in Example \ref{ex:cond_a}, $\mathfrak{c}\wedge j_*(\mathfrak{c})=2[11111111]\neq 0$ in $H_{8}(\Z^4\times \Z^4)=\Z$.
We then obtain $\TC(N)=\cat(C_{\Delta}(N))=8$. 

This example shows that, when $n$ is even, the condition $r<2n$ in Theorem \ref{thm:TC}(2a) and Theorem \ref{thm:cofibre}(2a) is sharp. 
\end{ex}

\begin{ex} \emph{A manifold $N$ with $\pi_1(N)=\Z^7\times \Z_3$, $\dim(N)=7$ and $\TC(N)=2\dim(N)$.}
We start with $M=T^7 \# L_3^7$ where $L_3^7$ is the lens space $S^7/\Z_3$ and we consider the map $f$
\[\xymatrix{
M=T^7\# L_3^7 \ar[r]^-{pinch} & T^7\vee L_3^7 \ar@{^(->}[r] & T^7 \times L_3^{\infty}=B(\Z^7 \times \Z_3)
}\]
which induces the abelianization homomorphism $\pi_1(f): \pi_1(M)=\Z^7 * \Z_3 \to G=\Z^7 \times \Z_3$. After killing $\ker\pi_1(f)$, we obtain 
an orientable manifold $N$ of dimension $7$ and a map $g:N\to BG$ which induces an isomorphism of fundamental groups and such that ${\mathfrak 
c}=g_*([N])=f_*([M])=[11111110]+[00000007]$. As in Example \ref{ex:cond_b} we have ${\mathfrak c}\wedge j_*{\mathfrak c}=2[11111117]\neq 0$ and conclude that $\TC(N)=\cat(C_{\Delta}(N)=14$.
		 
This example shows that the condition $r<n$ in Theorem \ref{thm:TC}(2b) 
and Theorem \ref{thm:cofibre}(2b) is sharp when $n$ is odd. This is also an example of an odd dimensional manifold where we have $j_*g_*([N])\neq \pm g_*([N])$.	
\end{ex}

We now briefly summarize several further examples.  The first example shows that the condition $n\geq r$ in Theorem \ref{thm:TC}(2b) and Theorem \ref{thm:cofibre}(2b) is sharp {when $n$ is even}.

\begin{ex} 
Starting with $M=T^6 \# (S^1 \times L_3^5)$ which realizes the class $[11111100]+[00000015]\in H_6(\Z^7 \times \Z_3)$ of Example \ref{ex:cond_b_even}, we obtain a manifold $N$ with $\pi_1(N)=\Z^7\times \Z_3$, $\dim(N)=6$ and $\TC(N)=\cat(C_{\Delta}(N))=2\dim(N)$. 
\end{ex}

The two following examples show that the condition $r<2$ in Theorem \ref{thm:TC}(2c) and Theorem \ref{thm:cofibre}(2c) is sharp.

\begin{ex} 
Starting with $M=(S^1\times L_3^3) \# (S^1 \times L_3^3)$ which realizes the class $[1030]+[0103]\in H_4(\Z^2 \times \Z_3 \times \Z_3)$ of Example \ref{ex:cond_c}, we obtain a manifold $N$ with $\pi_1(N)=\Z^2\times \Z_3\times \Z_3$, $\dim(N)=4$ and $\TC(N)=\cat(C_{\Delta}(N))=2\dim(N)$.
\end{ex}

\begin{ex} 
{Starting} with $M=(S^1\times S^1\times L_3^3) \# L_3^5$ which realizes the class $[1130]+[005]\in H_5(\Z^2 \times \Z_3 \times \Z_3)$ of Example \ref{ex:cond_c_odd}, we obtain a manifold $N$ with $\pi_1(N)=\Z^2\times \Z_3\times \Z_3$, $\dim(N)=5$ and $\TC(N)=\cat(C_{\Delta}(N))=2\dim(N)$. 
\end{ex}

The following example shows that, when $n$ is odd, we can not expect a free part in Theorems \ref{thm:TC}(2d) and \ref{thm:cofibre}(2d). See below 
for the case $n$ even.

\begin{ex} 
Starting with $M=(S^1 \times L_3^3\times L_3^3) \#  L_3^7$ which realizes the class $[1330]+[0007]\in {H_7}(\Z \times \Z_3^3)$ of Example \ref{ex:cond_d_1}, we obtain a manifold $N$ with $\pi_1(N)=\Z\times \Z_3^3$, $\dim(N)=7$ and $\TC(N)=\cat(C_{\Delta}(N))=2\dim(N)$. 	
\end{ex}

The following example shows that, when $n$ is even, we can not expect an extension to a four-fold product of Condition (2d) of Theorems \ref{thm:TC} and \ref{thm:cofibre}. See below for the case $n$ odd.
\begin{ex} 
Starting with $M=(L_3^3\times L_3^3) \# (L_3^3\times L_3^3)$ which realizes the class $[3300]+[0033]\in H_7(\Z_3^4)$ of Example \ref{ex:cond_d_2}, we obtain a manifold $N$ with $\pi_1(N)=\Z_3^4$, $\dim(N)=12$ 
and $\TC(N)=\cat(C_{\Delta}(N))=2\dim(N)$. 	
\end{ex}

The following example shows that, when $n$ is even, the condition $r<2n$ in Theorem \ref{thm:TC}(2e) and Theorem \ref{thm:cofibre}(2e) is sharp. 
\begin{ex} 
Let $P=\R\P^3\times S^3$. The composition of the projection $\R\P^3\times S^3 \to \R\P^{3}$ with the inclusion $\R\P^3 \to \R\P ^{\infty}$ yields a map $P \to \R\P^{\infty}$ which induces an isomorphism of fundamental groups and sends $[P]$ to $0\in H_6(\Z_2)$. Starting with $M=(T^6) \# (T^6\# P)$ we can then realize the class ${\mathfrak c}=[1111110000000]+[0000001111110]\in H_{6}(\Z^{12}\times \Z_2)$, which is similar to the class considered in Example \ref{ex:cond_e}. Checking that $\mathfrak{c}\wedge j_*\mathfrak{c}\neq 0$, we then obtain a manifold $N$ with $\pi_1(N)=\Z^{12}\times \Z_2$, $\dim(N)=6$ and $\TC(N)=\cat(C_{\Delta}(N))=2\dim(N)$. 	
\end{ex}

{Finally}, in order to obtain examples which show that we can not expect a free term in Theorem \ref{thm:TC}(2d) and Theorem \ref{thm:cofibre}(2d) 
{when $n$ is even} and that we can not expect an extension of these two statements to a four-fold product when 
$n$ is odd, we consider the classes $\mathfrak{c}$ of Example \ref{ex:cond_d_1_even} and Example \ref{ex:cond_d_2odd}, which are both of dimension 
$4\leq n \leq 5$. First, since $n\leq 5$, we can invoke Thom's realizability theorem \cite[Theorem II.27]{Thom} to obtain an $n$-dimensional orientable manifold $M$ and a map $f:M\to BG$ such that $f_*([M])=\mathfrak c$. As $\pi_1(f)$ may not be surjective, we can follow the procedure described in the proof of \cite[Theorem 2.1]{Kutsak} to 
obtain $\tilde{f}:\widetilde{M}\to BG$ such that $\pi_1(\tilde{f})$ is surjective and $\tilde{f}_*([\widetilde{M}])=f_*([M])$. Then killing $\ker \pi_1(\tilde{f})$ we obtain as before an $n$-dimensional manifold $N$ with $\pi_1(N)=G$ and $\TC(N)=2\dim(N)$. Note that we could use this procedure for all the examples above of dimension $4$ or $5$.

\newcommand{\arxiv}[1]{{\texttt{\href{http://arxiv.org/abs/#1}{{arXiv:#1}}}}}

\newcommand{\MRh}[1]{\href{http://www.ams.org/mathscinet-getitem?mr=#1}{MR#1}}

\bibliographystyle{plain}

\end{document}